\theoremstyle{plain}
\newtheorem{theorem}{Theorem}[section]
\newtheorem{lemma}[theorem]{Lemma}
\newtheorem{corollary}[theorem]{Corollary}
\newtheorem{proposition}[theorem]{Proposition}
\theoremstyle{definition}
\newtheorem{definition}[theorem]{Definition}
\newtheorem{remark}[theorem]{Remark}
\newtheorem{example}[theorem]{Example}
\definecolor{darkblue}{rgb}{0,0,0.7} 
\newcommand{\darkblue}{\color{darkblue}} 
\newcommand{\Dfn}[1]{\emph{\darkblue #1}} 
\newcommand{\eps}{\varepsilon}
\newcommand\Sage{\texttt{Sage}}
\newcommand\plantri{\texttt{plantri}}
\newcommand\Bertini{\texttt{Bertini}}
\newcommand\deltassr{\mathrm{SSR}}
\newcommand\deltaL{\mathrm{SSL}} 
\newcommand\deltaC{\mathrm{SSC}} 
\newcommand\graph{\Gamma}
\newcommand\simplgraph{\overline{\graph}}
\newcommand\graphtri{\graph_T}
\newcommand\graphdis{\graph_D}
\newcommand\hz{\hphantom{0}}
\newcommand\range{{\rm R}}
\newcommand\RMS{{\rm RMS}} 
\title[Area difference bounds for odd dissections of a square]{Area difference bounds for dissections of a square\\ into an odd number of triangles}
\thanks{This research was supported by the DFG Collaborative Research Center TRR~109 ``Discretization in Geometry and Dynamics.''}
\author[J.-P.~Labb\'e]{Jean-Philippe Labb\'e} 
\address[J.-P. Labb\'e]{Institut f\"ur Mathematik, Freie Universit\"at Berlin, Arnimallee 2, 14195 Berlin, Germany}
\email{labbe@math.fu-berlin.de}
\urladdr{http://page.mi.fu-berlin.de/labbe}
\author[G. Rote]{G\"unter Rote}
\address[G. Rote]{Institut f\"ur Informatik, Freie Universit\"at Berlin, Takustra\ss e 9, 14195 Berlin, Germany}
\email{rote@inf.fu-berlin.de}
\urladdr{http://page.mi.fu-berlin.de/rote} 
\author[G.M. Ziegler]{G\"unter M. Ziegler} 
\address[G.M. Ziegler]{Institut f\"ur Mathematik, Freie Universit\"at Berlin, Arnimallee 2, 14195 Berlin, Germany}
\email{ziegler@math.fu-berlin.de}
\urladdr{http://page.mi.fu-berlin.de/gmziegler} 
\begin{document}

\begin{abstract}
Monsky's theorem from 1970 states that a square cannot be dissected
into an odd number $n$ of triangles of the same area, but it does not
give a lower bound for the area differences that must occur.

We extend Monsky's theorem to ``constrained framed maps''; based on this 
we can apply a gap theorem from semi-algebraic geometry to a polynomial
area difference measure and thus get a lower bound for the area differences 
that decreases doubly-exponentially with $n$. 
On the other hand, we obtain the first superpolynomial upper bounds
for this problem, derived from an explicit construction that uses
the Thue--Morse sequence.
\end{abstract}

\maketitle

\tableofcontents

%
%

\section{Introduction}\label{sec:Intro}

Around fifty years ago, in 1967, the following unsolved geometric problem appeared in
the \textit{American Mathematical Monthly} \cite{richman_problem_1967}:
\begin{quote}
    \emph{Let $N$ be an odd integer. Can a rectangle be dissected into 
    $N$ nonoverlapping triangles, all having the same area?}
\end{quote} 
The answer is now known to be negative, there is no such dissection:
This was first established by Thomas \cite{thomas_dissection_1968}
for dissections for which all vertex coordinates are rational with odd denominator; 
Monsky \cite{monsky_dividing_1970} subsequently 
extended the proof to general dissections.
Monsky's theorem inspired studies of generalizations and
related problems for trapezoids, centrally symmetric polygons, 
as well as higher-dimensional versions: 
A nice 1994 book by Stein and Szab\'o \cite[Chap.~5]{stein_algebra_1994} surveyed
this, but also after that there was continued interest and a lot of additional work, see  \cite{stein_cutting_1999,stein_generalized_2000,praton_cutting_2002,stein_cutting_2004,rudenko_equidissection_2013,rudenko_arithmetic_2014}.

Monsky's theorem says that a dissection of a square into an odd number of triangles
cannot be done if we require the triangles to have the same area \emph{exactly}, 
but it does not say how \emph{close} one can get.
Indeed, neither Monsky's proof, nor the only known alternative Ansatz by 
Rudenko \cite{rudenko_arithmetic_2014}, seems to yield an estimate for this.  
The \emph{quantitative} study of dissections, i.e., the 
necessary differences in areas of triangles, was formalized as an optimization problem around 15 years ago, see \cite{mansow_ungerade_2003,ziegler_open_2006}. 

To measure the area differences, we use the \emph{range} of the 
areas $a_1,a_2,\dots,a_n$ of the triangles in a dissection~$D$:%
\begin{equation}
  \label{eq:range-def}
	\range(D):=\max_{i, j\in[n]}|a_i-a_j|.
\end{equation}
Thus we are interested in the behavior of the function
\[
	\Delta(n):= 
\min\{\,\range(D) \mid D \text{ is a dissection of the square into } n \text{ triangles}\,\},
\]
which measures the minimal range of a dissection into $n$ triangles,
for odd $n\ge3$. For example, $\Delta(3)=1/4$,
because the best dissections with 3 triangles have areas $1/4,1/4,1/2$. 

The main results of this paper are the bounds 
\begin{equation}
\frac1{2^{2^{O(n)}}} 
\le {\Delta(n)} \leq 
\frac1{2^{\Omega(\log^2{n})}},
\tag{$*$}
		\label{eq:main-reformed}
\end{equation}
or alternatively, on a logarithmic scale,
\begin{equation*}
 2^{O(n)} \ge
  \log \frac 1{\Delta(n)}
\ge\Omega(\log^2{n}).
\end{equation*} 
Although in our main results 
we quantify the area differences in a dissection $D$ in terms of the range,
other measures are possible, and turn out to be useful.
Alternatives include the root mean square error of the areas 
(the standard deviation).
This differs from the range at most by a factor of $\sqrt n$, and thus has the same asymptotics
on the logarithmic scale, but we will demonstrate that 
for specific values of $n$ we get different optimal solutions.

The left inequality 
in~\eqref{eq:main-reformed} provides a doubly-exponential lower bound for the range of areas
in any odd dissection. 
To prove it, we introduce in Section~\ref{sec:area-discrep}
the {sum of squared residuals}, without taking square roots, as
this is a polynomial function that can be directly treated with
real algebraic techniques.
This is then used in Section~\ref{sec:lower_bound} to derive the
lower bound for the range from a general ``gap theorem'' in real
algebraic geometry (Theorem~\ref{thm:double_exp_lower}). 
We emphasize that we do \emph{not} obtain a new independent proof
of Monsky's theorem, as we use Monsky's result in the proof.

The first improvement over the trivial upper bound of $\Delta(n)=O(1/n^2)$
was due to Schulze \cite{schulze_area_2011},
who in 2011 provided a family of triangulations with 
$\Delta(n)=O(1/n^3)$.
This is still the best known bound for triangulations, and so far,
there were also no better bounds for the more general class of dissections.
Our upper bound in~\eqref{eq:main-reformed} goes far beyond this bound.
We prove it in Section~\ref{superpolynomial} by constructing a family
of dissections with the help of the Thue--Morse sequence
(Theorem~\ref{thm:superpoly}).
On a logarithmic scale, Schulze's upper bound on $\Delta(n)$ can be written as
 $\log\frac{1}{\Delta(n)} \ge 3 \log n\, (1+o(1))$.
Our bound $\log \frac1{\Delta(n)} \ge \Omega(\log^2n)$ provides 
the first \emph{superpolynomial} upper bound for the range of areas in
a family of dissections.

The lower bounds we construct on the area differences of dissections
are, in particular, valid for the special case of triangulations.
On the other hand, we do not have a construction of triangulations
that would improve on Schulze's upper bounds, but we hope
that this could be achieved by an extension of our techniques.

The present text is structured as follows. 
Section~\ref{sec:monsky_revisit} provides definitions, background and
a review of the proof of Monsky's theorem (Theorem~\ref{thm:monsky}).
In Section~\ref{sec:recast} we construct a setting of “framed maps” and “constrained framed maps” 
that generalizes dissections, and extend Monsky's theorem to
framed maps (Theorem~\ref{thm:frame_monsky}). Section~\ref{sec:area-discrep} 
introduces the area difference polynomials. They allow us to 
apply estimates from semi-algebraic geometry in order to obtain
the lower bounds of~\eqref{eq:main-reformed} in Section~\ref{sec:lower_bound} 
(Theorem~\ref{thm:double_exp_lower}).
In Section~\ref{sec:experiment} we report results of computational 
enumerations of combinatorial types and optimal dissections for small 
odd $n$ for various area difference measures.
In Section~\ref{sec:upper_bounds}  (Theorem~\ref{thm:superpoly})
we prove the superpolynomial upper bounds in~\eqref{eq:main-reformed}.
Finally, in Section~\ref{sec:even}, we briefly discuss dissections into an even
number of triangles. There are combinatorial types of dissections or
triangulations for which even areas cannot be achieved, for various
reasons. We show that our lower bounds  carry over to such cases.

\section{Background}\label{sec:monsky_revisit}

In this section we define dissections of simple polygons and review the coloring used in Monsky's proof.

\subsection{Dissection of simple polygons}

\begin{definition}[Simple polygon, sides, corners]
A \Dfn{simple polygon} is a compact subset $P\subset\mathbb{R}^2$
whose boundary is a simple (i.e., nonintersecting) closed curve formed by finitely many line segments.
The \Dfn{sides} of~$P$ are the maximal line segments on the boundary of~$P$.
The \Dfn{corners} of $P$ are the endpoints of the sides of~$P$.
\end{definition}

A simple polygon with $k$ sides is a simple $k$-gon, and
a simple polygon with three sides is a triangle.
Hence, we may refer to corners and sides of a triangle on the plane.

\begin{definition}[Dissections and triangulations of a simple polygon]
A \Dfn{dissection} of a simple polygon~$P$ is a finite set of triangles with disjoint interiors that cover~$P$.
If every pairwise intersection is either empty, a corner, or a common side of both triangles, the dissection is a \Dfn{triangulation}.
\end{definition}

To distinguish between dissections and triangulations, we say that a simple polygon is dissected, or triangulated.
Figure~\ref{fig:ex_polygon} illustrates the distinction.
The main tool used to encode the combinatorial structure of a dissection is the following labeled graph \cite{monsky_dividing_1970,abrams_spaces_2014}.

\begin{definition}[Skeleton graph of a dissection, nodes, edges]
Let $D=\{t_1,t_2,\dots,t_n\}$ be a dissection.
The \Dfn{nodes} $V(\graphdis)$ of the \Dfn{skeleton graph} $\graphdis$ of $D$ are the corners of the triangles in $D$.
There is an \Dfn{edge} between two nodes if they are on the same side of a triangle of $D$ and the line segment joining them does not contain other
nodes.
\end{definition}

Since the skeleton graph of a dissection comes with a specific embedding on the plane,
i.e.\ as a plane graph,
it is possible to define a \Dfn{face} of the skeleton graph~$\graph$ as the cycle obtained on the boundary of a triangle of the dissection.
We abuse language and refer to a \Dfn{triangular face} for either the triangle (as a subset of the plane) 
or to its boundary cycle with distinguished corners.
We also consider the outside face as a face of~$\graph$: It is the
cycle $B$ formed by the edges on the sides of the simple polygon.

The face structure is actually uniquely defined just by the graph~$\graph$ together
with the boundary cycle $B$. It is easy to show that the skeleton
$\graph$ 
must be \emph{internally $3$-connected}: It must become $3$-connected if we add an outside vertex
and connect it to all vertices of $B$.
Otherwise, $\graph$ cannot even be drawn with convex faces.
(If the graph has no degree-2 vertices except on $B$, this condition
is also sufficient; see Tutte~\cite{tutte-60}
or Thomassen~\cite[Theorem 5.1]{thomassen-80}
 for more precise statements.)
 It is well-known that $3$-connected graphs have a unique
combinatorial embedding in the plane, i.e., the set of face cycles is fixed.

\begin{definition}[Boundary nodes, internal nodes, corner nodes, side nodes, boundary edges, internal edges]
\Dfn{Boundary nodes} of $\graphdis$ are corners of triangles lying on the boundary of~$P$ (i.e., the outside face of $\graphdis$).
Nodes of $\graphdis$ that are not boundary nodes are \Dfn{internal nodes}.
\Dfn{Corner nodes} of $\graphdis$ are boundary nodes that are corners of $P$ (i.e., corners of the outside face).
Boundary nodes of $\graphdis$ that are not corners nodes are \Dfn{side nodes}.
\Dfn{Boundary edges} of $\graphdis$ lie on the sides of $P$ (i.e., on the sides of the outside face).
Other edges are called \Dfn{internal}.

We define similarly the boundary, corner and side nodes with respect to each triangular face of~$\graphdis$.
They all have three \Dfn{corner nodes} and possibly \Dfn{side nodes} that are corners of other triangles of~$D$ that lie in the interior of their sides.
\end{definition}

\begin{figure}[!ht]
\resizebox{0.8\hsize}{!}{\begin{tabular}{cc}

%
%
%
%
%
%
%
%
%
%
%
\begin{tikzpicture}[scale=1,
edge/.style={draw=blue!95!black,thick,line join=bevel,line cap=round},
vertex/.style={circle,inner sep=1pt,fill=blue}]

\coordinate (1) at (0,0);
\coordinate (2) at (2,0);
\coordinate (3) at (3,3);
\coordinate (4) at (1.5,2.5);
\coordinate (5) at (2,5);
\coordinate (6) at (-2,4);
\coordinate (7) at (0,3);
\coordinate (8) at (-2,2);

\coordinate (9) at (0,2);
\coordinate (10) at (1.75,1.25);

\coordinate (11) at (-1,1);

\filldraw[fill=blue!20!white, edge] (1) -- (2) -- (3) -- (4) -- (5) -- (6) -- (7) -- (8) -- cycle;

\draw[edge] (5) -- (7) -- (4) -- (1) -- (9) -- (8);
\draw[edge] (7) -- (9);

\draw[edge] (2) -- (10) -- (3);
\draw[edge] (10) -- (4);

\draw[edge] (9) -- (11);

\node[vertex] at (1) {};
\node[vertex] at (2) {};
\node[vertex] at (3) {};
\node[vertex] at (4) {};
\node[vertex] at (5) {};
\node[vertex] at (6) {};
\node[vertex] at (7) {};
\node[vertex,label=left:corner] at (8) {};
\node[vertex,label=right:{\small internal}] at (9) {};
\node[vertex] at (10) {};
\node[vertex,label=left:side] at (11) {};

\draw[thick, decorate,decoration={brace,amplitude=10pt},xshift=-1.25cm,yshift=0cm] (-0.75,0.75) -- (-2,2) node [midway,xshift=-1.25cm,yshift=-0.25cm] {boundary};

\end{tikzpicture}

&

\begin{tikzpicture}[scale=1,
edge/.style={draw=blue!95!black,thick,line join=bevel,line cap=round},
vertex/.style={circle,inner sep=1pt,fill=blue}]

\coordinate (1) at (0,0);
\coordinate (2) at (2,0);
\coordinate (3) at (3,3);
\coordinate (4) at (1.5,2.5);
\coordinate (5) at (2,5);
\coordinate (6) at (-2,4);
\coordinate (7) at (0,3);
\coordinate (8) at (-2,2);

\coordinate (9) at (0,2);
\coordinate (10) at (1.75,1.25);
\coordinate (11) at (1,1.25);

\coordinate (12) at (-1,1);
\coordinate (13) at (0,1.5);

\filldraw[fill=blue!20!white, edge] (1) -- (2) -- (3) -- (4) -- (5) -- (6) -- (7) -- (8) -- cycle;

\draw[edge] (1) -- (11) -- (2) -- (4) -- (11) -- (7) -- (13) -- (1);
\draw[edge] (4) -- (7) -- (5);

\draw[edge] (7) -- (12) -- (13) -- (11);

\node[vertex] at (1) {};
\node[vertex] at (2) {};
\node[vertex] at (3) {};
\node[vertex] at (4) {};
\node[vertex] at (5) {};
\node[vertex] at (6) {};
\node[vertex] at (7) {};
\node[vertex] at (8) {};
\node[vertex] at (11) {};
\node[vertex] at (12) {};
\node[vertex] at (13) {};

\end{tikzpicture}

\end{tabular}}
\caption{A dissection and a triangulation of a simple polygon; some boundary/\allowbreak corner/\allowbreak side/\allowbreak internal nodes are marked.}\label{fig:ex_polygon}
\end{figure}
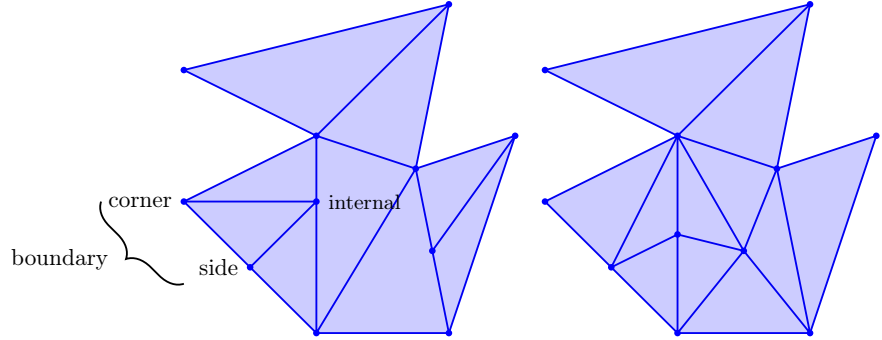

\begin{example}
The skeleton graphs of the triangulation and of the dissection in Figure~\ref{fig:ex_polygon} both have $11$ nodes,
namely $9$ boundary nodes ($8$ corner nodes and $1$ side node) and $2$ internal nodes.
They have $9$ boundary edges and all other edges are internal edges. 
\end{example}

In the following we will omit the subscript from $\graphdis$ 
and simply write~$\graph$ whenever the context is clear.
 We let $n$ be the number of triangles in a dissection, 
unless otherwise stated, and denote the triangles as $t_1,t_2,\dots,t_n$.

\subsection{Monsky's theorem}

We define a $3$-coloring of the nodes of the skeleton graph $\graph$, using a 
\Dfn{$2$-adic valuation} $|\cdot|_2$.
To define this on $\mathbb{Q}$, we set $|0|_2:=0$ and
\[
	\left|2^n{r}/{s}\right|_2:=2^{-n}.
\]
for any nonzero rational number $2^n{r}/{s}$ with $n\in\mathbb{Z}$ and odd integers $r,s$.
This function on $\mathbb{Q}$ satisfies the axioms of a valuation:
\[
1)\ |x|\geq 0, \text{ and } |x|=0 \Longleftrightarrow x=0,\qquad
2)\ |xy|=|x|\cdot |y|, \qquad 
3)\ |x+y|\leq \max\{|x|,|y|\}.
\] 
Condition 3) is stronger than the usual triangle inequality satisfied by a norm.
All valuations also satisfy $|{+}1|=|{-}1|=1$. In 3), equality holds unless $|x|=|y|$.
This valuation can be extended (in a non-canonical way) from
$\mathbb{Q}$ 
to $\mathbb{R}$.
For a complete account on how to produce such an extension, we refer the reader to~\cite[Chap.~5]{stein_algebra_1994} or \cite[Chap.~22]{aigner_proofs_2014}.
 
Any $2$-adic valuation on $\mathbb{R}$ determines a coloring of the points
 $(x,y)\in\mathbb{R}^2$ of the plane, as follows.
Let $m(x,y):=\max\{|x|_2,|y|_2,1\}$.
The first entry of $(|x|_2, |y|_2, 1)$ that equals $m(x,y)$ determines which of the three colors the point $(x,y)$ receives:
\begin{itemize}
	\item [red] if $|x|_2=m(x,y)$,
	\item [green] if $|x|_2<|y|_2=m(x,y)$, and 
	\item [blue] if $|x|_2,|y|_2<1=m(x,y)$. 
\end{itemize}
These cases cover all possibilities and are mutually exclusive.
A triangle is \Dfn{colorful} if it has corners of all three colors.
We classify line segments according to the colors of their endpoints.

The $3$-coloring of the plane
has the following crucial properties.
\begin{lemma}[{see e.g.\ \cite[Chapter~22, Lemma~1]{aigner_proofs_2014}}]\label{lem:property_coloring}
The $2$-adic valuation of the area of a colorful triangle is at least $2$. 
In particular, the area of a colorful triangle cannot be $0$ or
of the form $r/s$ with integer $r$ and odd~$s$, since $|r/s|_2\le 1$ in this case.
Consequently, every line contains points of at most
\textup(indeed, exactly\textup) two different colors.
\end{lemma}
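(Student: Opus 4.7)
The plan is to exploit the strong (ultrametric) triangle inequality
$|x+y|_2\le\max\{|x|_2,|y|_2\}$, which becomes an equality whenever one
summand strictly dominates, so that twice the area of a colorful triangle
will be forced to match a single ``leading monomial.'' First I fix vertices
$R=(x_1,y_1)$, $G=(x_2,y_2)$, $B=(x_3,y_3)$ of the three colors; the
coloring rules then translate directly into
\begin{equation*}
|x_1|_2\ge 1 \text{ and } |x_1|_2\ge |y_1|_2;\qquad
|y_2|_2\ge 1 \text{ and } |y_2|_2>|x_2|_2;\qquad
|x_3|_2,\,|y_3|_2<1.
\end{equation*}
Twice the signed area equals
$(x_2-x_1)(y_3-y_1)-(x_3-x_1)(y_2-y_1)$,
which after cancellation of the $x_1y_1$ terms expands into the six monomials
$+x_1y_2,\ -x_1y_3,\ +x_2y_3,\ -x_2y_1,\ +x_3y_1,\ -x_3y_2$.

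The second step is to single out $x_1y_2$ as uniquely dominant.
By multiplicativity $|x_1y_2|_2=|x_1|_2|y_2|_2\ge 1$, and a brief
case check shows that each of the other five products has strictly smaller
$2$-adic value: four of the comparisons use only the blue bound
$|x_3|_2,|y_3|_2<1$ played against $|x_1|_2,|y_2|_2\ge 1$, while the one
genuinely multiplicative case, $|x_2y_1|_2<|x_1y_2|_2$, follows by multiplying
the strict green inequality $|x_2|_2<|y_2|_2$ by the weak red inequality
$|y_1|_2\le|x_1|_2$. Iterated use of the ultrametric inequality then collapses
the sum, yielding $|2A|_2=|x_1y_2|_2\ge 1$, so
$|A|_2 = |1/2|_2 \cdot |2A|_2 \ge 2$, which is the first assertion.

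The two remaining statements fall out immediately. If $A=0$ then
$|A|_2=0<2$, and if $A=r/s$ with $r\in\mathbb{Z}$ and $s$ odd then
$|A|_2=|r|_2/|s|_2\le 1<2$; neither value is compatible with $|A|_2\ge 2$.
For the final clause, three collinear points of the three different colors
would form a degenerate colorful ``triangle'' of area $0$, which the first
part has just ruled out, so no line can meet all three color classes.

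Nothing in the plan is genuinely hard. The only care required is to match
the leading monomial to the color conventions correctly and to combine the
red and green inequalities multiplicatively for the single subtle comparison
$|x_2y_1|_2<|x_1y_2|_2$; every other inequality is a direct consequence of
blue being ``small'' and red or green being ``large.'' Beyond this
bookkeeping, the argument is a routine leading-term computation made
rigorous by the non-Archimedean nature of $|\cdot|_2$.
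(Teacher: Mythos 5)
Your proof is correct and follows essentially the standard argument that the paper cites from Aigner--Ziegler (the paper itself gives no proof, only the reference): the product $x_1y_2 = x_{\mathrm{red}}\,y_{\mathrm{green}}$ strictly dominates the other five monomials of $2A$ under the ultrametric inequality, forcing $|2A|_2=|x_1y_2|_2\ge 1$ and hence $|A|_2=|1/2|_2\,|2A|_2\ge 2$; the cited proof differs only in first translating the blue vertex to the origin, whereas you handle all six terms directly, and your one genuinely multiplicative comparison $|x_2y_1|_2<|x_1y_2|_2$ is justified correctly because $|x_1|_2\ge 1>0$. The only clause you do not address is the parenthetical ``indeed, exactly'' two colors per line, which requires the separate easy observation that every line meets at least two color classes; since that aside is not used in your argument for the substantive claims, this is a negligible omission.
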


From these properties, we derive the following version of Monsky's theorem,
which can readily be obtained from the original proof.
In Section~\ref{ssec:extension_monsky}, we prove an extension of this theorem to a larger class of objects (Theorem~\ref{thm:frame_monsky}), namely
\emph{constrained framed maps} of skeleton graphs of dissections. 

\begin{theorem}[Monsky \cite{monsky_dividing_1970}]\label{thm:monsky}
Let $E$ be a positive integer and $P$ be a simple polygon of area~$E$.
If~$P$ has an odd number of red-blue sides, then $P$ cannot be dissected into an
odd number of triangles of equal area.
\end{theorem}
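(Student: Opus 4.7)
The plan is to run Monsky's classical parity argument against the three-coloring defined via the $2$-adic valuation. Suppose for contradiction that the simple polygon~$P$ of integer area~$E$ admits a dissection~$D$ into an odd number~$n$ of triangles $t_1,\dots,t_n$, each of area $E/n$.

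First I would establish a local parity lemma: for each triangle~$t_i$, the number of red-blue edges of $\graphdis$ lying on $\partial t_i$ has the same parity as the indicator of ``$t_i$ is colorful''. The key input is Lemma~\ref{lem:property_coloring}: each side of $t_i$ lies on a line that carries at most two colors, so a side whose corners are not colored $\{$red, blue$\}$ carries no red-blue edges at all, while a side whose corners are colored $\{$red, blue$\}$ carries an odd number of them (walking along the side from one endpoint to the other must switch color an odd number of times). A short case analysis over the possible multisets of the three corner colors of $t_i$ then shows that the number of red-blue sides of $t_i$ equals~$1$ when $t_i$ is colorful and equals $0$ or $2$ otherwise. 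The same parity argument applied on each side of~$P$ shows that the total number of red-blue edges of $\graphdis$ lying on~$\partial P$ has the parity of the number of red-blue \emph{sides} of~$P$, which is odd by hypothesis.

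Next I would perform a global double count. Summing the local parity count over all $n$ triangles of~$D$, each internal red-blue edge of $\graphdis$ is counted twice and each boundary red-blue edge is counted once. Hence the number of colorful triangles in~$D$ has the same parity as the number of red-blue edges on~$\partial P$, which is odd by the previous paragraph. In particular at least one triangle $t_0\in D$ is colorful, so by Lemma~\ref{lem:property_coloring} its area satisfies $|\mathrm{area}(t_0)|_2\ge 2$; but $\mathrm{area}(t_0)=E/n$ with $E$ a positive integer and $n$ odd, which forces $|E/n|_2=|E|_2\le 1$, a contradiction.

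The only genuinely delicate step is the case analysis underlying the local parity lemma. One must keep in mind that a side of $t_i$ whose two corners are both green can still carry interior nodes of the third color admissible on that line (either all red or all blue), so one cannot argue purely at the level of corner colors; the two-colors-per-line property of Lemma~\ref{lem:property_coloring} has to be applied edge by edge along each side to get the parity count to go through.
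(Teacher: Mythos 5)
Your argument is correct and is essentially the paper's own proof: the paper establishes the generalization to constrained framed maps (Theorem~\ref{thm:frame_monsky}) by exactly this Sperner-style double count of red--blue edges, using Lemma~\ref{lem:property_coloring} both for the two-colors-per-line parity analysis along each side and for the final $2$-adic contradiction with the area $E/n$. One small overstatement in your justification: a side whose two corners are, say, both red can still carry red--blue edges if its supporting line happens to carry exactly the colors red and blue --- but the number of such edges is then even, so your local parity lemma and the rest of the argument stand.
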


Note that the assumptions depend on properties that have, per se,
nothing to do with the problem: Dissectability into equal-area triangles
is invariant under affine transformations, whereas the
assumptions
(integral area and the coloring of the corners) are obviously not.
The coloring is not even invariant under translations. 
Moreover, if $P$ has irrational corners, the coloring is not canonical,
since the extension of the valuation from $\mathbb{Q}$
to $\mathbb{R}$ depends on arbitrary choices. 
Once such a valuation on $\mathbb{R}$ is fixed, the coloring of the
corners is determined, and the assumptions of the theorem can be checked.

\subsection{Error measures}

In the introduction we summarized our main results in terms of the {range} $\range(D)$  
of the triangle areas of an odd dissection~\eqref{eq:range-def}. 
Alternative measures for the deviation of the areas from the average value
will be important in the following. In particular, 
given a dissection $D$ of a simple polygon $P$ of area $E$ into triangles of areas $a_1,a_2,\dots,a_n$,
the \Dfn{root mean square} (\RMS) error is defined as
\[
	\RMS(D):= \sqrt{\frac{1}{n}\sum_{i=1}^n\left(a_i-\frac{E}{n}\right)^2}.
\]

If we restrict ourselves to the set of framed maps coming from dissections, the following proposition 
shows that obtaining a lower bound for the~\RMS\ error implies directly a lower bound on the range, and 
an upper bound on the range gives an upper bound on the \RMS~error.

\begin{proposition}\label{prop:lower_bound}
For a dissection $D$ into $n$ triangles,
the range $\range(D)$ and the root mean square error $\RMS(D)$ are related as follows\textup:
\[
	\frac{\range(D)}{2\sqrt n}\leq \RMS(D) \leq \range(D)
\]
\end{proposition}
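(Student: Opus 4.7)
The plan is to reduce the proposition to a purely elementary inequality about a finite sequence of real numbers with prescribed mean, since the dissection structure plays no role other than fixing $\sum a_i = E$ and hence the average $\bar a := E/n$. I would write $a_{\max} := \max_i a_i$ and $a_{\min} := \min_i a_i$, so that $\range(D) = a_{\max} - a_{\min}$, and set
\[
 u := a_{\max}-\bar a,\qquad v := \bar a - a_{\min}.
\]
Both $u$ and $v$ are nonnegative since $\bar a$ is a convex combination of the $a_i$, and $u+v = \range(D)$.

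For the upper bound, I would observe that for every $i$, the value $a_i$ lies in $[a_{\min}, a_{\max}]$, and so does $\bar a$; hence $|a_i - \bar a| \le a_{\max} - a_{\min} = \range(D)$. Squaring and averaging over $i$ gives $\RMS(D)^2 \le \range(D)^2$, which yields the right-hand inequality.

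For the lower bound, I would note that the sum $\sum_i (a_i - \bar a)^2$ contains in particular the two terms coming from the indices attaining the maximum and minimum, so
\[
 n\,\RMS(D)^2 \;=\;\sum_{i=1}^n (a_i-\bar a)^2 \;\ge\; u^2 + v^2.
\]
Combining this with the Cauchy–Schwarz-type inequality $(u+v)^2 \le 2(u^2+v^2)$ gives $n\,\RMS(D)^2 \ge \tfrac12\range(D)^2$, i.e.\ $\RMS(D) \ge \range(D)/\sqrt{2n}$, which is slightly stronger than the stated bound $\range(D)/(2\sqrt n)$ since $\sqrt{2n}\le 2\sqrt n$.

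There is no real obstacle here; the only point worth flagging is that one must be careful not to bound $|a_i - \bar a|$ by $\range(D)$ too loosely in the lower-bound direction, which is why one isolates the two extreme terms $u^2$ and $v^2$ in the sum of squared residuals instead of bounding the whole sum by a single term.
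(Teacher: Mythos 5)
Your proof is correct, and in the lower-bound direction it takes a slightly different (and sharper) route than the paper. The paper writes $\lambda_i=a_i-E/n$, uses the two-sided comparison $\max_i|\lambda_i|\le\range(D)\le 2\max_i|\lambda_i|$ between the range and the maximum norm, and then the standard inequality $\max_i|\lambda_i|\le\sqrt{\textstyle\sum_i\lambda_i^2}=\sqrt n\,\RMS(D)$; the factor $2$ in the stated bound comes entirely from the crude estimate $\range(D)\le 2\max_i|\lambda_i|$, which keeps only one extreme term. You instead split $\range(D)=u+v$ with $u=a_{\max}-\bar a$ and $v=\bar a-a_{\min}$, retain \emph{both} extreme squared deviations in $\sum_i(a_i-\bar a)^2\ge u^2+v^2$, and apply $(u+v)^2\le 2(u^2+v^2)$; this yields $\RMS(D)\ge\range(D)/\sqrt{2n}$, improving the paper's constant from $1/(2\sqrt n)$ to $1/\sqrt{2n}$ while still implying the stated inequality. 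The upper-bound argument is the same in both proofs (both $a_i$ and $\bar a$ lie in $[a_{\min},a_{\max}]$, so each deviation is at most the range). One small point you implicitly rely on, which is worth making explicit: $E/n$ is indeed the mean of the $a_i$ because the triangles of a dissection have disjoint interiors and cover $P$, so $\sum_i a_i=E$; this is what guarantees $u,v\ge0$.
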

The upper bound can actually be strengthened to $\RMS(D) \leq
\range(D)/2$, which is tight for all even~$n$.
For simplicity, we only prove the weaker bound.

\begin{proof}
Let $E$ be the area of $P$, and
write $\lambda_i=a_i-\frac{E}{n}$.
Then $\sqrt n\cdot\RMS(D)
=\sqrt{\sum_{i=1}^n \lambda_i^2}$ is
the $2$-norm of the vector $(\lambda_i)_{i=1}^{n}$, whereas the range is related to the maximum
norm as follows:
\begin{equation}
  \label{eq:max-norm}
\max_{i\in[n]} |\lambda_i|\le
\range(D)\leq 2 \max_{i\in[n]} |\lambda_i|. 
\end{equation}
 The standard bound between the maximum norm and the 2-norm gives 
$\max_{i} |\lambda_i|\leq\sqrt n\cdot\RMS(D)\le \sqrt n \max_{i} |\lambda_i|$.
Together with \eqref{eq:max-norm}, this gives the claimed result.
\end{proof}

\section{Monsky's theorem for constrained framed maps of dissection skeleton graphs}\label{sec:recast}

The goal of this section is to extend Monsky's theorem to a more general version 
for which lower bounds on the range can be obtained more easily.  
 We define combinatorial types of dissections in
 Section~\ref{ssec:diss_comb_type}. Section~\ref{ssec:diss_coll_constr}
 describes how we deal with collinearity constraints.
In Section~\ref{ssec:framed_maps}, we define framed maps and constrained framed maps of skeleton graphs of dissections. 
In Section~\ref{ssec:signed_areas}, we define signed areas with respect to framed maps. 
Finally, in Section~\ref{ssec:extension_monsky}, we extend Monsky's
theorem to constrained framed maps of skeleton graphs of dissections
of a simple polygon.

\subsection{Combinatorial types of dissections}\label{ssec:diss_comb_type}

\begin{definition}[Combinatorial data of a dissection]
The \Dfn{combinatorial data} of a dissection $D$ of a simple
$k$-gon~$P$ into triangles $t_1,\dots, t_n$ are given by the quadruple
$(\graph,B,C,\mathcal{T})$:~$\graph$ is
the skeleton graph of $D$; $B\subseteq V(\graph)$ is the vertex set of the boundary cycle
of~$\graph$;
$C=(c_1,\dots,c_k)$ is the sequence of
 corner nodes of $P$ in cyclic order; and finally,
\[
	\mathcal{T}=\left\{\{v^i_1,v^i_2,v^i_3\}\mid 1\leq i\leq n\right\},
\]	
where $\{v^i_1,v^i_2,v^i_3\}$ are the corners of $t_i$. 
\end{definition}
  
\begin{definition}[Abstract dissection]
An \Dfn{abstract dissection of a $k$-gon} is a quadruple
$\mathcal{D}=(\graph,B,C,\mathcal{T})$ with the following conditions:
\begin{enumerate}
\item $\graph$ is a planar graph, with a plane drawing bounded by a simple cycle with vertex set $B$. 
\item $\graph$ is internally $3$-connected with respect this drawing.
\item $C=(c_1,\dots,c_k)$ is a sequence of $k$ vertices in $B$, which occur in this order on the boundary cycle.
\item $\mathcal{T}$ consists of, for each interior face of $\graph$,
 a triplet of vertices from the boundary of this face.
\label{triplets}
\end{enumerate}
\end{definition}

As was mentioned before, the face structure of an internally
$3$-connected graph is unique, given the outer face~$B$; thus,
condition~(\ref{triplets}) is well-defined even if $\graph$ is just
given as an abstract graph.
Alternatively,
we might consider an abstract dissection as a
plane graph together with the additional data $C$ and $\mathcal{T}$.

An isomorphism between abstract dissections is a graph isomorphism that
preserves $B$, $C$ and~$\mathcal{T}$.
Abstract dissections capture the notion of a {combinatorial type}
of a dissection.
We say that
two dissections $D$ and $D'$ of a simple polygon $P$ have the same
\Dfn{combinatorial type} if their combinatorial data are isomorphic
when considered as abstract dissections.
A combinatorial type is thus an isomorphism class of abstract
dissections.

\begin{example}
Figure~\ref{fig:combi_type} shows two dissections of the square that have isomorphic skeleton graphs.
The isomorphism  fixes the corner nodes, but it does not induce a bijection between the corners of
the triangles: the internal node~$x$ is a corner of $4$ triangles in the first dissection, while no node is a corner of $4$ triangles in the second.
Hence they do not have the same combinatorial type.

\begin{figure}[!ht]
\begin{tabular}{c@{\hspace{2cm}}c}

\begin{tikzpicture}[scale=2.5,
edge/.style={color=blue!95!black,thick,line join=bevel,line cap=round},
vertex/.style={circle,inner sep=1pt,fill=blue}]

\coordinate (1) at (0.000000,0.000000);
\coordinate (2) at (1.00000,0.000000);
\coordinate (3) at (1.00000,1.00000);
\coordinate (4) at (0.000000,1.00000);
\coordinate (5) at (0.250000,0.666667);
\coordinate (6) at (0.500000,0.333333);

\draw[edge] (1) -- (2);
\draw[edge] (1) -- (4);
\draw[edge] (1) -- (6);
\draw[edge] (2) -- (3);
\draw[edge] (2) -- (6);
\draw[edge] (3) -- (4);
\draw[edge] (3) -- (5);
\draw[edge] (3) -- (6);
\draw[edge] (4) -- (5);
\draw[edge] (5) -- (6);

\node[vertex] at (1) {};
\node[vertex] at (2) {};
\node[vertex] at (3) {};
\node[vertex] at (4) {};
\node[vertex] at (5) {};
\node[vertex,label=below:$x$] at (6) {};

\end{tikzpicture}

&

\begin{tikzpicture}[scale=2.5,
edge/.style={color=blue!95!black,thick,line join=bevel,line cap=round},
vertex/.style={circle,inner sep=1pt,fill=blue}]

\coordinate (1) at (0.000000,0.000000);
\coordinate (2) at (1.00000,0.000000);
\coordinate (3) at (1.00000,1.00000);
\coordinate (4) at (0.000000,1.00000);
\coordinate (5) at (0.333333,0.666667);
\coordinate (6) at (0.166667,0.333333);

\draw[edge] (1) -- (2);
\draw[edge] (1) -- (4);
\draw[edge] (1) -- (6);
\draw[edge] (2) -- (3);
\draw[edge] (2) -- (6);
\draw[edge] (3) -- (4);
\draw[edge] (3) -- (5);
\draw[edge] (3) -- (6);
\draw[edge] (4) -- (5);
\draw[edge] (5) -- (6);

\node[vertex] at (1) {};
\node[vertex] at (2) {};
\node[vertex] at (3) {};
\node[vertex] at (4) {};
\node[vertex] at (5) {};
\node[vertex,label={[shift={(0.1,-0.5)}]$x$}] at (6) {};

\end{tikzpicture}

\end{tabular}
\caption{Two different combinatorial types of dissections of the square with isomorphic skeleton graphs}\label{fig:combi_type}
\end{figure}
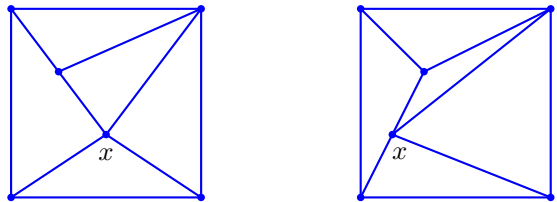
\end{example}

\subsection{Collinearity constraints of dissections}\label{ssec:diss_coll_constr}

Let $D$ be a dissection of a simple polygon $P$.
Any set of three distinct nodes of $\graph$ that lie on a side of a face of $\graph$ (which might be the outside face) form a \Dfn{collinearity constraint} of $D$.
We now describe the selection of a certain set of collinearity constraints that will play an important role later on.

\begin{definition}[Reduced system of collinearity constraints of a dissection, simplicial graph of a dissection]\label{def:red_system}
Let $D$ be a dissection of a simple polygon $P$.
Let $t\in D$ and let $c, c'$ be two corners of~$t$.
Assume that the line segment between $c$ and $c'$ contains in its interior the side nodes $v_1,\dots,v_j$ ordered from $c$ to $c'$, with $j\geq 1$.
Add to $\graph$ the edges $cv_i$ for $1<i\le j$, as well as $cc'$:
The resulting graph is again plane, and it gets the triangles
$cv_iv_{i+1}$ for $1\le i<j$, as well as $cv_jc'$. 
Repeat this procedure for each side of a triangle~$t\in D$, and
for all sides of the outside face of $\graph$.  
All the sets of three corners of a triangle added in this process
are put together in order to get a \Dfn{reduced system of collinearity constraints}.
See Figure~\ref{fig:schem_coll} for an illustration.

\begin{figure}[!ht]
\begin{tikzpicture}[scale=1.5,
edge/.style={color=blue!95!black,thick,line join=bevel,line cap=round},
lincon/.style={color=blue!95!black,thick,line join=bevel,line cap=round,dashed},
side/.style={color=blue!95!black,thin,line join=bevel,line cap=round,dotted},
vertex/.style={circle,inner sep=1pt,fill=blue}]

\coordinate (1) at (0.000000,0.000000);
\coordinate (2) at (0:2.5);
\coordinate (3) at (60:2.5);

\coordinate (121) at (0:1.25);
\coordinate (231) at ($(2)!0.25!(3)$);
\coordinate (232) at ($(2)!0.5!(3)$);
\coordinate (233) at ($(2)!0.75!(3)$);

\coordinate (131) at ($(1)!0.25!(3)$);
\coordinate (132) at ($(1)!0.5!(3)$);
\coordinate (133) at ($(1)!0.75!(3)$);

\draw[side] (-0.5,-0.25) -- (3,-0.25) -- (3,2.5) -- (-0.5,2.5) -- cycle;

\node at (2.75,2.25) {$P$};
\node at (1.25,-0.86) {};

\draw[edge] (1) -- (2);
\draw[edge] (2) -- (3);
\draw[edge] (3) -- (1);
\draw[edge] (121) -- +(300:0.1);

\draw[edge] (231) -- +(75:0.1);
\draw[edge] (232) -- +(45:0.1);
\draw[edge] (233) -- +(30:0.1);

\draw[edge] (131) -- +(90:0.1);
\draw[edge] (132) -- +(135:0.1);
\draw[edge] (133) -- +(150:0.1);

\draw[edge] (3) -- +(10:0.1);
\draw[edge] (3) -- +(170:0.1);
\draw[edge] (3) -- +(90:0.1);

\draw[edge] (1) -- +(150:0.1);
\draw[edge] (2) -- +(90:0.1);

\draw[lincon] (1) to [out=20,in=160] (2);

\draw[lincon] (1) to [out=30,in=270] (132);
\draw[lincon] (1) to [out=30,in=260] (133);
\draw[lincon] (1) to [out=30,in=270] (3);

\draw[lincon] (231) to [out=140,in=270] (3);
\draw[lincon] (232) to [out=150,in=270] (3);
\draw[lincon] (2) to [out=150,in=270] (3);

\node[vertex] at (1) {};
\node[vertex] at (2) {};
\node[vertex] at (3) {};
\node[vertex] at (121) {};
\node[vertex] at (231) {};
\node[vertex] at (232) {};
\node[vertex] at (233) {};
\node[vertex] at (131) {};
\node[vertex] at (132) {};
\node[vertex] at (133) {};

\coordinate (a) at (-0.5,-0.25);
\coordinate (b) at (3,-0.25);

\coordinate (aba) at ($(a)!0.25!(b)$);
\coordinate (abb) at ($(a)!0.5!(b)$);
\coordinate (abc) at ($(a)!0.75!(b)$);

\draw[edge] (a) -- (b);

\draw[edge] (a) -- +(45:0.1);

\draw[edge] (aba) -- +(75:0.1);
\draw[edge] (abb) -- +(45:0.1);
\draw[edge] (abb) -- +(135:0.1);
\draw[edge] (abc) -- +(30:0.1);
\draw[edge] (abc) -- +(135:0.1);

\draw[edge] (b) -- +(120:0.1);

\draw[lincon] (a) to [out=-30,in=210] (b);
\draw[lincon] (aba) to [out=-30,in=200] (b);
\draw[lincon] (abb) to [out=-20,in=200] (b);

\node[vertex] at (a) {};
\node[vertex] at (b) {};
\node[vertex] at (aba) {};
\node[vertex] at (abb) {};
\node[vertex] at (abc) {};

\end{tikzpicture}
\caption{The addition of edges to the skeleton graph $\graph$ yields new triangular faces representing collinearity constraints.
}\label{fig:schem_coll}
\end{figure}
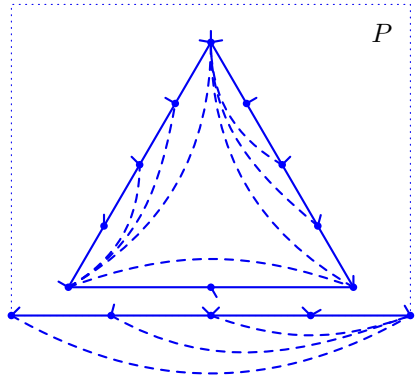

\noindent This procedure yields a supergraph $\simplgraph$ of $\graph$ that contains no new nodes but may contain some new edges between side nodes of triangles and $P$.
We refer to the graph $\simplgraph$ as a \Dfn{simplicial graph} of~$D$.
\end{definition}

A similar procedure is described implicitly in~\cite[Sect.~3]{rudenko_arithmetic_2014} and \cite[Sect.~2]{abrams_spaces_2014}.
By construction, $\simplgraph$ is a plane graph, and we take it with the
specified embedding, so its bounded faces (which are triangles)
inherit the  orientation from the plane.
In other words, the graph~$\simplgraph$ is the 1-skeleton of a simplicial complex that is homeomorphic to a 2-dimensional ball, whose triangles correspond to triangles of the original dissection and possibly triangles given by a reduced system of collinearity constraints.
The edges of the outer face corresponds to the sides of the polygon~$P$.

\begin{example}\label{ex:lin_constraints}
Consider the dissection $D$ of the square shown in Figure~\ref{fig:lin_constraints}.
This dissection has two simplicial graphs with reduced systems of collinearity constraints $\{\{b_1,c_1,c_2\},\{i_1,b_1,i_2\},\{i_2,b_1,c_3\}\}$ and
$\{\{b_1,c_1,c_2\},$ $\{i_1,b_1,c_3\},\{i_2,i_1,c_3\}\}$.

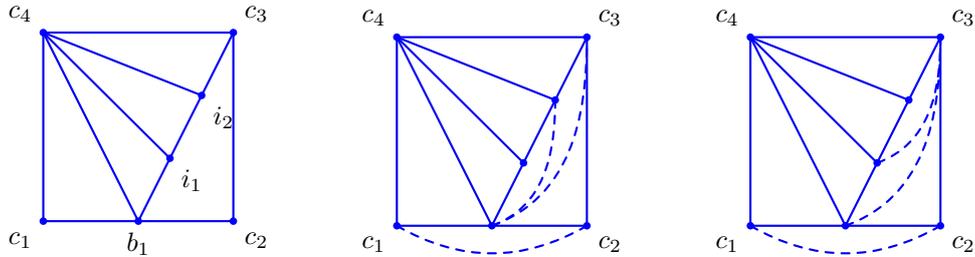
\begin{figure}[!ht]
\begin{tabular}{c@{\hspace{1cm}}c@{\hspace{1cm}}c}

\begin{tikzpicture}[scale=2.5,
edge/.style={color=blue!95!black,thick,line join=bevel,line cap=round},
vertex/.style={circle,inner sep=1pt,fill=blue}]

\coordinate (1) at (0.000000,0.000000);
\coordinate (2) at (1.00000,0.000000);
\coordinate (3) at (1.00000,1.00000);
\coordinate (4) at (0.000000,1.00000);
\coordinate (5) at (0.500000,0.000000);
\coordinate (6) at (0.666667,0.333333);
\coordinate (7) at (0.833333,0.666667);

\node[inner sep = 0pt, label=below left:$c_1$] at (1) {};
\node[inner sep = 0pt, label=below right:$c_2$] at (2) {};
\node[inner sep = 0pt, label=above right:$c_3$] at (3) {};
\node[inner sep = 0pt, label=above left:$c_4$] at (4) {};

\node[inner sep = 0pt, label=below:$b_1$] at (5) {};

\node[inner sep = 0pt, label=below right:$i_1$] at (6) {};
\node[inner sep = 0pt, label=below right:$i_2$] at (7) {};

\draw[edge] (1) -- (4);
\draw[edge] (1) -- (5);
\draw[edge] (2) -- (3);
\draw[edge] (2) -- (5);
\draw[edge] (3) -- (4);
\draw[edge] (3) -- (7);
\draw[edge] (4) -- (5);
\draw[edge] (4) -- (6);
\draw[edge] (4) -- (7);
\draw[edge] (5) -- (6);
\draw[edge] (6) -- (7);

\node[vertex] at (1) {};
\node[vertex] at (2) {};
\node[vertex] at (3) {};
\node[vertex] at (4) {};
\node[vertex] at (5) {};
\node[vertex] at (6) {};
\node[vertex] at (7) {};

\end{tikzpicture}

&

\begin{tikzpicture}[scale=2.5,
edge/.style={color=blue!95!black,thick,line join=bevel,line cap=round},
lincon/.style={color=blue!95!black,thick,line join=bevel,line cap=round,dashed},
vertex/.style={circle,inner sep=1pt,fill=blue}]

\coordinate (1) at (0.000000,0.000000);
\coordinate (2) at (1.00000,0.000000);
\coordinate (3) at (1.00000,1.00000);
\coordinate (4) at (0.000000,1.00000);
\coordinate (5) at (0.500000,0.000000);
\coordinate (6) at (0.666667,0.333333);
\coordinate (7) at (0.833333,0.666667);

\node[inner sep = 0pt, label=below left:$c_1$] at (1) {};
\node[inner sep = 0pt, label=below right:$c_2$] at (2) {};
\node[inner sep = 0pt, label=above right:$c_3$] at (3) {};
\node[inner sep = 0pt, label=above left:$c_4$] at (4) {};

\node[inner sep = 0pt] at (5) {};

\node[inner sep = 0pt] at (6) {};
\node[inner sep = 0pt] at (7) {};

\draw[edge] (1) -- (4);
\draw[edge] (1) -- (5);
\draw[edge] (2) -- (3);
\draw[edge] (2) -- (5);
\draw[lincon] (1) to [out=-30,in=210] (2);
\draw[lincon] (5) to [out=20,in=270] (3);
\draw[lincon] (5) to [out=20,in=270] (7);
\draw[edge] (3) -- (4);
\draw[edge] (3) -- (7);
\draw[edge] (4) -- (5);
\draw[edge] (4) -- (6);
\draw[edge] (4) -- (7);
\draw[edge] (5) -- (6);
\draw[edge] (6) -- (7);

\node[vertex] at (1) {};
\node[vertex] at (2) {};
\node[vertex] at (3) {};
\node[vertex] at (4) {};
\node[vertex] at (5) {};
\node[vertex] at (6) {};
\node[vertex] at (7) {};

\end{tikzpicture}

&

\begin{tikzpicture}[scale=2.5,
edge/.style={color=blue!95!black,thick,line join=bevel,line cap=round},
lincon/.style={color=blue!95!black,thick,line join=bevel,line cap=round,dashed},
vertex/.style={circle,inner sep=1pt,fill=blue}]

\coordinate (1) at (0.000000,0.000000);
\coordinate (2) at (1.00000,0.000000);
\coordinate (3) at (1.00000,1.00000);
\coordinate (4) at (0.000000,1.00000);
\coordinate (5) at (0.500000,0.000000);
\coordinate (6) at (0.666667,0.333333);
\coordinate (7) at (0.833333,0.666667);

\node[inner sep = 0pt, label=below left:$c_1$] at (1) {};
\node[inner sep = 0pt, label=below right:$c_2$] at (2) {};
\node[inner sep = 0pt, label=above right:$c_3$] at (3) {};
\node[inner sep = 0pt, label=above left:$c_4$] at (4) {};

\node[inner sep = 0pt] at (5) {};

\node[inner sep = 0pt] at (6) {};
\node[inner sep = 0pt] at (7) {};

\draw[edge] (1) -- (4);
\draw[edge] (1) -- (5);
\draw[edge] (2) -- (3);
\draw[edge] (2) -- (5);
\draw[lincon] (1) to [out=-30,in=210] (2);
\draw[lincon] (5) to [out=20,in=270] (3);
\draw[lincon] (6) to [out=20,in=270] (3);
\draw[edge] (3) -- (4);
\draw[edge] (3) -- (7);
\draw[edge] (4) -- (5);
\draw[edge] (4) -- (6);
\draw[edge] (4) -- (7);
\draw[edge] (5) -- (6);
\draw[edge] (6) -- (7);

\node[vertex] at (1) {};
\node[vertex] at (2) {};
\node[vertex] at (3) {};
\node[vertex] at (4) {};
\node[vertex] at (5) {};
\node[vertex] at (6) {};
\node[vertex] at (7) {};

\end{tikzpicture}

\end{tabular}
\caption{A dissection $D$ of a square with $5$ collinearity constraints (left) and two sketches of the graph $\simplgraph$ with new dashed
edges showing two different reduced systems of collinearity constraints (center and right)}\label{fig:lin_constraints}
\end{figure}
\end{example}

\begin{example}\label{illegal}
Figure~\ref{fig:does-not-enforce-collinearity}a--b shows a more elaborate
example of the construction of~$\simplgraph$.
Figure~\ref{fig:does-not-enforce-collinearity}c--d demonstrates
that a set of \emph{reduced} collinearity constraints is
per se not a substitute for the full set of collinearity constraints:
Although each triple in the reduced set
$\{
\{1,2,5\},
\{2,4,5\},
\{2,3,4\}
\}$
of collinearity constraints is collinear,
the line $15243$, which is supposed to be straight,
has a kink.

Nevertheless, our adaptation of Monsky's proof will exclude even such ``illegal''
solutions from having equal-area triangles.

  \begin{figure}[!ht]
    \centering
\includegraphics{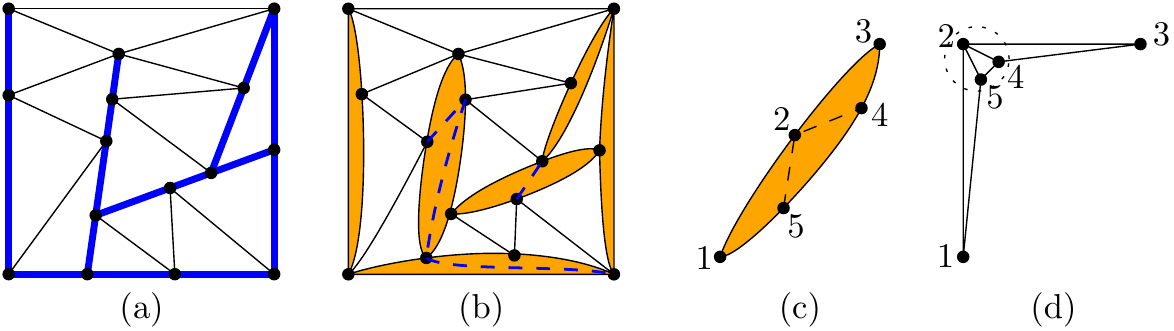}
\caption{(a) A dissection with collinearity constraints highlighted in
  blue; (b)~the graph $\simplgraph$ with new dashed
  edges showing a reduced systems of collinearity constraints;
(c)~a set of reduced collinearity constraints coming from a single
straight segment; (d) a drawing where all reduced collinearity
constraints are satisfied, shown symbolically:
The three points $2,4,5$ in the dotted circle should actually coincide.}
\label{fig:does-not-enforce-collinearity}
\end{figure}
  
\end{example}

The procedure to get a reduced system of collinearity constraints is not unique.
Nevertheless, as the next lemma shows, the collinearity constraints of a dissection 
are given by its combinatorial type.
Furthermore, the size of a reduced system of collinearity constraints is an invariant of the combinatorial type of the dissection:
it is equal to the total number of side nodes in $\graph$.

\begin{lemma}
Let $D$ and $D'$ be two dissections of a simple polygon $P$.
If $D$ and $D'$ have the same combinatorial type, then they have the
same sets of collinearity constraints.  
\end{lemma}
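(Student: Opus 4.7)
The plan is to show that the notion of a collinearity constraint can be read off purely from the abstract dissection data $(\Gamma,B,C,\mathcal{T})$, so that any isomorphism of combinatorial types automatically carries collinearity constraints to collinearity constraints. The geometric definition (three nodes lying on a side of a face) will be matched with a purely combinatorial description (three nodes lying on a common ``arc'' of a face boundary between two consecutive corners).

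First I would use the fact, already recorded in the paper, that an internally $3$-connected plane graph with prescribed outer boundary $B$ has a canonical set of face cycles. Hence the set of bounded faces of $\Gamma$, together with each face's boundary cycle, is determined by $(\Gamma,B)$ alone. For each bounded face $f$, the triplet $\{v_1^f,v_2^f,v_3^f\}\in\mathcal{T}$ is prescribed, so the three corners of $f$ split its boundary cycle into three well-defined arcs; I shall call these the combinatorial sides of~$f$. The same is done for the outer face using the sequence $C$. I then define a combinatorial collinearity constraint to be a set of three distinct vertices contained in a single combinatorial side of some face.

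The next step is to verify that in any geometric realization $D$ of this abstract dissection, the combinatorial sides of $f$ coincide with the geometric sides of the corresponding triangle (or of the outer polygon). This is immediate from the definitions in Section~\ref{sec:monsky_revisit}: the corners of a triangular face are exactly the vertices $\{v_1^f,v_2^f,v_3^f\}$, and every other boundary vertex is, by definition, a side node lying in the interior of a geometric side of the triangle; the boundary arc between two consecutive corners along the face cycle therefore consists exactly of the points of the graph lying on one geometric side. Thus the geometric collinearity constraints of~$D$ are precisely the combinatorial collinearity constraints of the abstract dissection, and the same for $D'$.

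Finally, an isomorphism of abstract dissections preserves $\Gamma$, $B$, $C$ and $\mathcal{T}$, hence preserves face boundary cycles, and within each face it preserves the distinguished triple of corners. Consequently it maps combinatorial sides to combinatorial sides and therefore collinearity constraints to collinearity constraints. Combining this with the preceding paragraph yields the claim. The only subtle point, and hence the main thing to write out carefully, is the identification of ``combinatorial side'' with ``geometric side''; all other steps are bookkeeping that follows directly from the definitions and from the uniqueness of the combinatorial embedding of an internally $3$-connected plane graph.
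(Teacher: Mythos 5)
Your argument is correct. The paper states this lemma without proof (treating it as immediate from the definitions), and your write-up---identifying the geometric sides of each face with the arcs of its canonical face cycle cut out by the distinguished corner triple from $\mathcal{T}$ (respectively by $C$ for the outer face), so that the collinearity constraints are determined by the abstract dissection data $(\Gamma,B,C,\mathcal{T})$ and hence preserved by any isomorphism of combinatorial types---is precisely the justification the surrounding text implicitly relies on.
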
 

Next, we give bounds on the number of nodes and the cardinality of a reduced system of collinearity constraints 
in relation to the number of triangles of a dissection.

\begin{lemma}\label{lem:combinatorial_prop}
Let $P$ be a simple polygon with $K$ corners and $D$ a dissection of~$P$ into~$n$ triangles, with a reduced system of collinearity
constraints $L$ of cardinality $\ell$.
\begin{enumerate}[label={\rm(\roman{enumi})},ref=(\roman{enumi})]
	\item The number of nodes of $\graph$ is $({n + K + \ell + 2})/{2}$. \label{lem:combinatorial_prop_i}
	\item The number of collinearity constraints satisfies $\ell\leq n - K + 2$. \label{lem:combinatorial_prop_ii}
\end{enumerate}
Therefore the number of nodes of $\graph$ is at least $({n+K+2})/{2}$ and at most $n+2$.
\end{lemma}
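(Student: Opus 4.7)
The plan is to apply the classical angle-sum identity to the triangles of $D$ and compare it with a vertex-wise computation of the same quantity. First I would partition the vertices of $\graph$ into four disjoint types: the $K$ corners of $P$, the $L_B$ boundary nodes of $\graph$ in the interior of some side of $P$, the $L_I$ interior vertices that lie on the interior of the side of exactly one triangle of $D$, and the $V_I^\circ$ remaining ``purely interior'' vertices (not a side node of any triangle). Two small geometric facts justify that this partition is disjoint and exhaustive: first, an interior vertex cannot lie on the interior of the sides of two distinct triangles, since the two resulting half-plane contributions of $\pi$ would already fill the $2\pi$ available at $v$ and leave no room for a triangle corner at $v$, contradicting $v \in V(\graph)$; second, no boundary node of $\graph$ can lie on the interior of a triangle's side, by an analogous half-plane argument using the $\pi$ of interior angle inside $P$ available at a boundary point.

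Next I would equate the total of all interior angles of the triangles in $D$ (namely $\pi n$) with the vertex-wise total. At each vertex $v$ the triangles contribute a total corner angle of: $\alpha_v$ (the polygon angle of $P$ at $v$) if $v$ is a corner of $P$; $\pi$ if $v$ is a boundary side node; $\pi$ if $v$ is an interior side node (the unique triangle whose side passes through $v$ consumes the other $\pi$); and $2\pi$ if $v$ is purely interior. Summing over all vertices and using $\sum_{v \text{ corner of } P} \alpha_v = (K-2)\pi$ together with $\ell = L_B + L_I$ (which follows from Definition~\ref{def:red_system}: each processed side contributes one constraint per side node in its interior, and each side node lies on exactly one processed side), I obtain the master identity
\[
n \;=\; 2\,V_I^\circ + \ell + (K - 2).
\]

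Both parts of the lemma then follow. For part~\ref{lem:combinatorial_prop_i}, the direct count $V = K + L_B + L_I + V_I^\circ = K + \ell + V_I^\circ$, combined with $V_I^\circ = (n - \ell - K + 2)/2$ from the master identity, yields $V = (n + K + \ell + 2)/2$. For part~\ref{lem:combinatorial_prop_ii}, the trivial inequality $V_I^\circ \geq 0$ immediately gives $\ell \leq n - K + 2$. The summary bounds $(n + K + 2)/2 \leq V \leq n + 2$ then follow by inserting $\ell \geq 0$ and $\ell \leq n - K + 2$ into the formula of part~\ref{lem:combinatorial_prop_i}. I expect the main obstacle to be the clean verification of the vertex classification and the bookkeeping identification $\ell = L_B + L_I$; once these are in place the rest is elementary algebraic manipulation of the master identity.
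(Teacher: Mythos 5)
Your proof is correct in substance but takes a genuinely different route from the paper's. The paper works with the simplicial graph $\simplgraph$: it double-counts incidences between edges and the $n+\ell$ triangular faces to get $3(n+\ell)=2e-K$, and then eliminates $e$ via Euler's formula to obtain $2N=n+K+\ell+2$; part~(ii) is then the one-line observation that $\ell$ is at most the number of non-corner nodes, $N-K$. You instead never touch $\simplgraph$ or Euler's formula: you equate the total interior angle $n\pi$ of the triangles with a vertex-by-vertex account of the angle consumed at each node, using $\sum_v\alpha_v=(K-2)\pi$, and read off both the node count and the inequality $V_I^\circ\ge 0\Rightarrow \ell\le n-K+2$ from the resulting master identity. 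What your approach buys is a completely elementary, purely metric argument with no auxiliary graph; what it costs is the geometric vertex classification (your claims about which nodes can lie in the relative interior of a triangle side), which is exactly where the only real issue sits.

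That issue is your second ``small geometric fact.'' The argument ``$\pi$ of interior angle is available at a boundary point'' is valid at side nodes of $P$ but not at a \emph{reflex corner} of $P$, where the interior angle exceeds $\pi$ and a triangle side can pass straight through the corner with the corner in its relative interior. (Example: dissect the L-shaped hexagon with corners $(0,0),(2,0),(2,1),(1,1),(1,2),(0,2)$ into the triangle $(0,0)(2,0)(0,2)$ and the two leftover triangles; the reflex corner $(1,1)$ lies in the interior of the hypotenuse, so $n=3$, $K=6$, $\ell=1$.) At such a corner both your angle bookkeeping (which should read $\alpha_v-\pi\sigma_T(v)$, with $\sigma_T(v)$ the number of triangle sides through $v$) and your identification $\ell=L_B+L_I$ (which misses these corners) acquire correction terms; they cancel in part~(i), so your formula for the node count survives, but your derivation of the master identity as written does not, and part~(ii) genuinely fails in the example ($\ell=1>n-K+2=-1$). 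To be fair, the paper's own proof of (ii) has the identical blind spot---it asserts $\ell\le N-K$ without justification, and that bound fails in the same example---and the lemma is only ever applied to the square. For convex $P$ your claim is true as stated and your proof is complete; you should either add the convexity hypothesis or handle $\sigma_T$ at reflex corners explicitly.
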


\begin{proof}
\ref{lem:combinatorial_prop_i} Let $N$ be the number of nodes of $\graph$ (and $\simplgraph$).
Consider the simplicial graph $\simplgraph$ and denote by~$e$ its number of edges.
The graph $\simplgraph$ has $n+\ell$ triangular faces (excluding the outside face) given by the $n$ triangular faces of $\graph$ and the $\ell$
triangles corresponding to collinearity constraints in $L$.
We count the number of occurrences ``an edge of a triangular face of $\simplgraph$'' in two ways.
First, each triangular face of $\simplgraph$ contributes 3 such occurrences, getting $3(n+\ell)$.
Doing the previous counting, all internal edges are counted twice, while the boundary edges are counted once, getting $2e-K$ (observe that $\simplgraph$ does not have side nodes in any triangular face).
Therefore, we have
\begin{align}\label{eq:counting}
	3n=2e-K-3\ell.
\end{align}
Euler's equation on $\simplgraph$ gives $e = N + (n + \ell) -1$.
Substituting this into \eqref{eq:counting}, we get $2N = n + K + \ell + 2$.

\ref{lem:combinatorial_prop_ii} The number of linear dependencies $\ell$ is bounded above by $N-K$.
Therefore, using the last equation for $2N$,
\[
	2\ell\leq 2(N-K) = n-K+\ell+2.\qedhere
\]
\end{proof}

\subsection{Framed maps and constrained framed maps}\label{ssec:framed_maps}
 
A dissection with a given combinatorial type is characterized by the following requirements:
\begin{enumerate}
\item [(i)]
All
vertices that lie on an edge of some triangle or of $P$ are collinear.
\item [(ii)]
The corner nodes coincide with the corners of $P$.
\item [(iii)]
The triangles are properly oriented and nonoverlapping.
\end{enumerate}
We now define {framed maps} and constrained framed maps, in which
conditions (i) and (iii) or just condition (iii) is relaxed.
The key property we get from this generalization is that
the spaces of framed maps and of constrained framed maps have a simple
structure.
In Section~\ref{sec:area-discrep}, this will allow us to treat the minimization of the ``sum of squared
residuals'' for a combinatorial type of dissection as a polynomial minimization problem on a Euclidean space.

\begin{definition}
[Framed map and constrained framed map of the skeleton graph of a dissection]
Let $\graph$ be the skeleton graph of a dissection~$D$ of a simple polygon $P$, and $V(\graph)$ its set of nodes.
  \begin{enumerate}
  \item 
A \Dfn{framed map} is a map $\phi\colon V(\graph)\rightarrow \mathbb{R}^2$ 
that sends the corner nodes of $\graph$ to the corresponding corners of $P$.
\item 
A framed map $\phi$ is a \Dfn{constrained framed map} of $\graph$ if for every side of every face of $\graph$ (including the outside face), $\phi$ sends the side nodes and the two corners of that side to a line.
  \end{enumerate}
\end{definition}
Constrained framed maps for the special case of triangulations were
already considered in \cite[Sect.~3, p.~137]{abrams_spaces_2014} under
the name \emph{drawings}.

Clearly, for any dissection~$D$ of a simple polygon $P$, there is a corresponding constrained framed map $\phi_D$ of $\graphdis$.
The converse is false in general; not all constrained framed maps of the skeleton graph $\graphdis$ are obtained from a dissection, as described in the next example.

\begin{example}\label{ex:boundary_cond}
Consider the triangulation $T$ of the square shown on the left in Figure~\ref{fig:boundary_cond1}.
The side node $e$ can be moved towards the right or the left to obtain different framed maps of the skeleton graph of the triangulation which are not constrained framed maps.
To have a constrained framed map the node $e$ should be sent on the vertical line spanned by the corners $b$ and $c$; if it is not between them, the constrained framed map does not
represent a dissection.

\begin{figure}[!ht]
\resizebox{0.9\hsize}{!}{\begin{tabular}{c@{\hspace{0.5cm}}c@{\hspace{0.5cm}}c@{\hspace{0.5cm}}c}
\begin{tikzpicture}[scale=2.5,
edge/.style={color=blue!95!black,thick,line join=bevel,line cap=round},
vertex/.style={circle,inner sep=1pt,fill=blue}]

\coordinate (1) at (1.000000,0.000000);
\coordinate (2) at (1,0.500000);
\coordinate (3) at (1.00000,1.000000);
\coordinate (4) at (0.00000,1.00000);
\coordinate (5) at (0.000000,0.00000);

\draw[edge] (1) -- (2);
\draw[edge] (1) -- (5);
\draw[edge] (2) -- (3);
\draw[edge] (2) -- (4);
\draw[edge] (2) -- (5);
\draw[edge] (3) -- (4);
\draw[edge] (4) -- (5);

\node[vertex, label=below:$b$] at (1) {};
\node[vertex, label=right:$e$] at (2) {};
\node[vertex, label=above:$c$] at (3) {};
\node[vertex, label=above:$d$] at (4) {};
\node[vertex, label=below:$a$] at (5) {};

\end{tikzpicture}

&

\begin{tikzpicture}[scale=2.5,
edge/.style={color=blue!95!black,thick,line join=bevel,line cap=round},
vertex/.style={circle,inner sep=1pt,fill=blue}]

\coordinate (1) at (1.000000,0.000000);
\coordinate (2) at (1.2,0.500000);
\coordinate (3) at (1.00000,1.000000);
\coordinate (4) at (0.00000,1.00000);
\coordinate (5) at (0.000000,0.00000);

\draw[edge] (1) -- (2);
\draw[edge] (1) -- (5);
\draw[edge] (2) -- (3);
\draw[edge] (2) -- (4);
\draw[edge] (2) -- (5);
\draw[edge] (3) -- (4);
\draw[edge] (4) -- (5);

\fill[blue,fill opacity=0.2] (2) -- (1) -- (3) -- cycle;

\draw[edge,loosely dotted] (1) -- (3);

\node[vertex, label=below:$b$] at (1) {};
\node[vertex, label=right:$e$] at (2) {};
\node[vertex, label=above:$c$] at (3) {};
\node[vertex, label=above:$d$] at (4) {};
\node[vertex, label=below:$a$] at (5) {};

\end{tikzpicture}

&

\begin{tikzpicture}[scale=2.5,
edge/.style={color=blue!95!black,thick,line join=bevel,line cap=round},
vertex/.style={circle,inner sep=1pt,fill=blue}]

\coordinate (1) at (1.000000,0.000000);
\coordinate (2) at (0.8,0.500000);
\coordinate (3) at (1.00000,1.000000);
\coordinate (4) at (0.00000,1.00000);
\coordinate (5) at (0.000000,0.00000);

\draw[edge] (1) -- (2);
\draw[edge] (1) -- (5);
\draw[edge] (2) -- (3);
\draw[edge] (2) -- (4);
\draw[edge] (2) -- (5);
\draw[edge] (3) -- (4);
\draw[edge] (4) -- (5);

\fill[blue,fill opacity=0.2] (2) -- (1) -- (3) -- cycle;

\draw[edge,loosely dotted] (1) -- (3);

\node[vertex, label=below:$b$] at (1) {};
\node[vertex, label=left:$e$] at (2) {};
\node[vertex, label=above:$c$] at (3) {};
\node[vertex, label=above:$d$] at (4) {};
\node[vertex, label=below:$a$] at (5) {};

\end{tikzpicture}

&

\begin{tikzpicture}[scale=2.5,
edge/.style={color=blue!95!black,thick,line join=bevel,line cap=round},
vertex/.style={circle,inner sep=1pt,fill=blue}]

\coordinate (1) at (1.000000,0.000000);
\coordinate (2) at (1,1.200000);
\coordinate (3) at (1.00000,1.000000);
\coordinate (4) at (0.00000,1.00000);
\coordinate (5) at (0.000000,0.00000);

\draw[edge] (1) -- (2);
\draw[edge] (1) -- (5);
\draw[edge] (2) -- (3);
\draw[edge] (2) -- (4);
\draw[edge] (2) -- (5);
\draw[edge] (3) -- (4);
\draw[edge] (4) -- (5);

\node[vertex, label=below:$b$] at (1) {};
\node[vertex, label=right:$e$] at (2) {};
\node[vertex, label=right:$c$] at (3) {};
\node[vertex, label=above:$d$] at (4) {};
\node[vertex, label=below:$a$] at (5) {};

\end{tikzpicture}

\end{tabular}}
\caption{A triangulation $T$ of the square (left), two illustrations of framed maps of the graph $\graphtri$ that are not constrained framed maps with a dashed
edge belonging to $\simplgraph$ (middle), and a constrained framed map that {does not} describe a dissection (right).}\label{fig:boundary_cond1}
\end{figure}
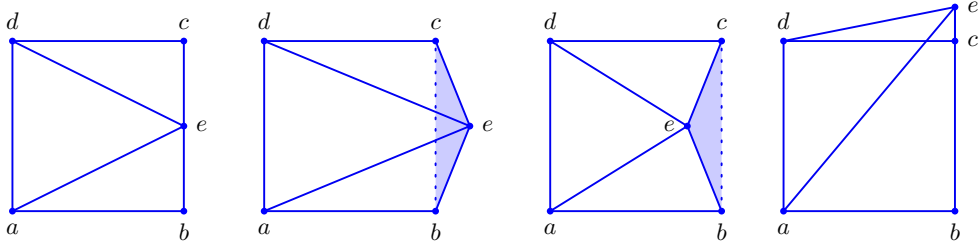
\end{example}

\subsection{Signed area of a triangular face}\label{ssec:signed_areas}

Given a dissection $D$, we define the signed areas of triangular faces of $\simplgraph$ with respect to framed maps of $\graph$.

\begin{definition}[Signed area of triangular faces with respect to a framed map]\label{def:signedarea}
	Let $D$ be a dissection, let $f$ be a triangular face of $\simplgraph$ 
	with corner nodes $c_1,c_2$, and $c_3$ labeled counterclockwise, and let
	$\phi$ be a framed map of $\graph$.
The \Dfn{signed area} $a_{\phi}(f)$ of $f$ with respect to $\phi$ is
\[
	a_{\phi}(f):=\frac{1}{2}\begin{vmatrix}
1 & 1 & 1 \\
x_{\phi(c_1)} & x_{\phi(c_2)} & x_{\phi(c_3)} \\
y_{\phi(c_1)} & y_{\phi(c_2)} & y_{\phi(c_3)} \\
\end{vmatrix},
\]
where $(x_{\phi(c_i)},y_{\phi(c_i)})$, with $1\leq i\leq 3$, are the coordinates of the corners nodes of $f$ given by~$\phi$.
\end{definition}

The signed area of a triangular face with respect to a framed map is a well-defined quantity even if the framed map does
not come from a dissection.

\begin{example}[Example \ref{ex:boundary_cond} continued]
  \label{slide-outside}
  When moving the node $e$ to the right
 in the triangulation~$T$ shown in Figure~\ref{fig:boundary_cond1}, the sum of the signed areas of
 the triangles becomes greater than the area of the square. 
 When moving $e$ to the left, it becomes smaller than the
 area of the square: the shaded area determined by the triangle
 ${bce}$ does not get added. 
 In the framed map shown on the right of Figure~\ref{fig:boundary_cond1}, the
 sum of the signed areas of triangles is equal to the area of the square.
\end{example}

The following lemma shows the invariance of the sum of the signed areas of triangular faces of~$\simplgraph$.

\begin{proposition}\label{prop:area_invariance}
Let $P$ be a simple polygon of area $E$, let $D=\{t_1,t_2,\dots,t_n\}$ be a dissection of~$P$, let~$L$ be a reduced system of collinearity constraints of $D$,
and let~$\phi$ be a framed map of the skeleton graph of $D$.
The sum of the signed areas of triangular faces of $\simplgraph$ equals $E$:
\[
	E=\sum_{i=1}^{n}a_\phi(t_i) + \sum_{\ell\in L} a_\phi(\ell).
\]
\end{proposition}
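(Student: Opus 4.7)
The plan is to recognize the simplicial graph $\simplgraph$ as a combinatorial triangulation of a topological disk whose outer boundary is the polygon $c_1c_2\cdots c_k$ on the corner nodes of $P$, and then apply the classical telescoping identity for the shoelace signed area on such a triangulation.

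First I would verify, directly from Definition~\ref{def:red_system}, that every interior face of $\simplgraph$ is a triangle — either one of $t_1,\dots,t_n$ or one of the $\ell$ collinearity triangles in $L$. Because the subdivision procedure is carried out on every side of every triangle \emph{and} on every side of the outside face of $\graph$, each boundary segment of $P$ from $c_i$ to $c_{i+1}$, subdivided by side nodes, is triangulated with apex $c_i$ and a direct edge $c_ic_{i+1}$ is added. Hence the outer face of $\simplgraph$, as a plane graph, is bounded exactly by $c_1c_2,c_2c_3,\dots,c_kc_1$. Using the embedding of $\simplgraph$ inherited from $\mathbb{R}^2$, every bounded face can be oriented counterclockwise, consistently with Definition~\ref{def:signedarea}.

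The second step is to rewrite the signed area of a counterclockwise-oriented face $f$ with corners $u,v,w$ by expanding the determinant, yielding
\[
a_\phi(f)=\tfrac12\bigl[(x_{\phi(u)}y_{\phi(v)}-x_{\phi(v)}y_{\phi(u)})+(x_{\phi(v)}y_{\phi(w)}-x_{\phi(w)}y_{\phi(v)})+(x_{\phi(w)}y_{\phi(u)}-x_{\phi(u)}y_{\phi(w)})\bigr],
\]
that is, a sum of one shoelace term $\tfrac12(x_{\phi(p)}y_{\phi(q)}-x_{\phi(q)}y_{\phi(p)})$ over the three directed edges $(p,q)$ of $f$ traversed counterclockwise. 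Summing this identity over all $n+\ell$ interior faces of $\simplgraph$, every internal edge of $\simplgraph$ contributes shoelace terms from its two incident faces; since these two faces traverse the edge in opposite directions, the contributions cancel. Every outer-face edge $c_ic_{i+1}$ is incident to exactly one interior face and contributes one shoelace term, oriented according to the counterclockwise traversal $c_1\to c_2\to\cdots\to c_k\to c_1$ of the corner polygon.

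The whole sum therefore collapses to the shoelace formula applied to the polygon with vertices $\phi(c_1),\dots,\phi(c_k)$; since $\phi$ is a framed map these vertices are precisely the corners of $P$ in cyclic order, so the result equals the area $E$ of $P$ (with the orientation chosen so that $E>0$). The main thing requiring care is the orientation bookkeeping: one must check that the plane embedding of $\simplgraph$ allows every interior face, including the possibly degenerate-looking collinearity triangles, to be oriented counterclockwise in a coherent way, so that opposite-direction cancellation across each shared internal edge is tautological. Notably, the argument is purely combinatorial in $\simplgraph$ and uses neither that $\phi$ comes from a dissection nor that $\phi$ is a constrained framed map — it holds for any framed map whatsoever.
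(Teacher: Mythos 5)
Your proof is correct, but it proceeds differently from the paper's. You establish the identity globally: you expand each signed area into shoelace terms attached to the directed edges of the face, cancel across every internal edge of $\simplgraph$ (traversed twice in opposite directions), and collapse the whole sum to the shoelace formula for the corner polygon $\phi(c_1),\dots,\phi(c_k)$, which a framed map pins to the corners of $P$. The paper instead argues locally and by reduction to a known case: for each non-corner node $s$, it lists the triangles of $\simplgraph$ incident to $s$ in counterclockwise order, expands the determinants, and observes that the coefficients of $x_s$ and $y_s$ in the sum vanish; hence the total is independent of where $\phi$ places the non-corner nodes, and it therefore equals its value for $\phi=\phi_D$, which is $E$ because there the $t_i$ tile $P$ and the collinearity triangles are degenerate. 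Your route buys a self-contained derivation of the value $E$ (no appeal to the base case $\phi=\phi_D$) and makes transparent exactly which hypotheses are used, at the cost of the global orientation bookkeeping you rightly flag -- in particular that the outer face of $\simplgraph$ is the $k$-cycle on the corner nodes and that every bounded face, including the degenerate collinearity triangles along the boundary, carries a coherent counterclockwise orientation from the combinatorial plane embedding (both facts are asserted in the paper's discussion following Definition~\ref{def:red_system} and used in Lemma~\ref{lem:combinatorial_prop}). The paper's route avoids that global bookkeeping but is less explicit about why the invariant value is $E$ rather than something else.
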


\begin{proof}
Let $s$ be a node of~$\graph$ which is not a corner of $P$ with $\phi(s)=(x_s,y_s)$.
In the simplicial graph~$\simplgraph$, the node $s$ is contained in at least three triangular faces which piece up together around~$s$.
Denote by $\tau_1=(s,w_0,w_1),
\tau_2=(s,w_1,w_2),\dots, \tau_k=(s,w_{k-1},w_0)$ the triangles of
$\simplgraph$ with $s$ as a corner in counterclockwise order.
Changing the entries of $\phi(s)$ affects only the signed area of
these triangles $\tau_1,\tau_2,\dots,\tau_k$.
Now compute the sum of the signed areas
\[
\sum_{i=1}^k a_{\phi}(\tau_i) =
\sum_{i=1}^k \frac{1}{2}
\begin{vmatrix} 
	1 & 1 & 1 \\ 
	x_s & x_{w_{i-1}} & x_{w_{i}} \\ 
	y_s & y_{w_{i-1}} & y_{w_{i}} 
\end{vmatrix},
\]
where $\phi(w_i)=(x_{w_i},y_{w_i})$. 
The ordering $(s, i-1, i)$ is the same as the one obtained in Definition~\ref{def:signedarea} of signed area.
Developing the determinants, and factoring the terms $x_s$ and $y_s$, we deduce that~$x_s$ and $y_s$ get multiplied by $0$.
Therefore the position of the node~$s$ does not influence the sum of signed areas.
Since this sum is equal to $E$ when $\phi=\phi_D$, the result follows. 
\end{proof}

\begin{corollary}\label{cor:area_invariance}
If $\phi$ is a constrained framed map of the skeleton graph of $D$, then
\[
	E = \sum_{i=1}^{n}a_{\phi}(t_i).
\]
\end{corollary}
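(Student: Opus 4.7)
The plan is to derive this as an immediate specialization of Proposition~\ref{prop:area_invariance}, which gives the identity
\[
E = \sum_{i=1}^{n} a_\phi(t_i) + \sum_{\ell \in L} a_\phi(\ell)
\]
for an \emph{arbitrary} framed map $\phi$ and any reduced system $L$ of collinearity constraints. Thus it suffices to show that the second sum vanishes as soon as $\phi$ is constrained.

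First I would recall, from Definition~\ref{def:red_system}, the combinatorial origin of the triples in $L$: each $\ell \in L$ consists of three nodes $\{c, v_i, v_{i+1}\}$ (or $\{c, v_j, c'\}$) all lying on a single side of some triangular face of $\graph$, or on a side of the outside face of $\graph$. The definition of a constrained framed map is precisely that for every side $s$ of every face of $\graph$, including the outside face, $\phi$ maps the two corner nodes and all side nodes of $s$ onto a common line in $\mathbb{R}^2$.

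Applying this to each $\ell \in L$, the three images $\phi(c_1), \phi(c_2), \phi(c_3)$ of the nodes of $\ell$ are collinear. By Definition~\ref{def:signedarea}, $a_\phi(\ell)$ is half of the determinant
\[
\begin{vmatrix}
1 & 1 & 1 \\
x_{\phi(c_1)} & x_{\phi(c_2)} & x_{\phi(c_3)} \\
y_{\phi(c_1)} & y_{\phi(c_2)} & y_{\phi(c_3)}
\end{vmatrix},
\]
which vanishes on any triple of collinear points. Hence $a_\phi(\ell) = 0$ for every $\ell \in L$, the entire sum $\sum_{\ell \in L} a_\phi(\ell)$ is zero, and Proposition~\ref{prop:area_invariance} reduces to the asserted equality $E = \sum_{i=1}^{n} a_\phi(t_i)$.

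There is no real obstacle here; the only subtlety worth flagging explicitly in the write-up is that the conclusion is independent of the particular choice of reduced system $L$, since each term in the extra sum vanishes individually. This is consistent with the remark in Example~\ref{illegal} that a reduced system does not in general enforce the full set of collinearity constraints, but for the purpose of this signed-area identity any reduced system will do.
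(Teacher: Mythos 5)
Your proposal is correct and matches the paper's (implicitly intended) argument: the corollary is stated without proof precisely because it follows from Proposition~\ref{prop:area_invariance} by observing that a constrained framed map sends each collinearity triple to collinear points, so every $a_\phi(\ell)$ vanishes. Your additional remark that the identity is independent of the choice of reduced system is a nice touch but not needed.
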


\subsection{Monsky's theorem for constrained framed maps of skeleton graphs}\label{ssec:extension_monsky}

Monsky's original result provides more than the result for the square.
As Monsky already noted in~\cite{monsky_dividing_1970}, his result holds 
for all simple polygons of integral area with an odd number of sides of type red-blue.

The following result plays a key role in Section~\ref{sec:lower_bound} to prove the positivity of a polynomial measure of area differences for which we provide a lower bound.
It extends Monsky's result (Theorem~\ref{thm:monsky}) to constrained framed maps, which---as we have seen---are considerably more general than dissections. 
We get Monsky's original result when the simple polygon is the square with corners $(0,0)$ (colored blue), $(1,0)$ (colored red), $(1,1)$ (colored
red), and $(0,1)$ (colored green) and take constrained framed maps coming from dissections.

\begin{theorem}\label{thm:frame_monsky} 
Let $P$ be a simple polygon of integer area~$E$ and let $\phi$ be a constrained framed map of the skeleton graph~$\graph$ of a dissection $D$ of $P$ into an odd number $n$ of triangles.
If~$P$ has an odd number of red-blue sides, then there exists a triangular face of $\graph$ whose signed area with
respect to~$\phi$ is different from $E/n$.
\end{theorem}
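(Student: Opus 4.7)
The plan is to adapt Monsky's original parity argument from Theorem~\ref{thm:monsky}, exploiting the fact that the defining axioms of a constrained framed map supply exactly what the combinatorial half of Monsky's argument needs. First I would color each node $v\in V(\graph)$ by the color of its image $\phi(v)\in\mathbb{R}^2$ under the $2$-adic $3$-coloring, and call a triangular face $t$ of $\graph$ \emph{colorful} if its three corner nodes receive all three colors. The two features of constrained framed maps that I would use are: the corner nodes are mapped to the corresponding corners of $P$, so the coloring along the boundary cycle of $P$ is the prescribed one; and for every side of every face of $\graph$, the side-node images lie on the line through the two corner images, hence by Lemma~\ref{lem:property_coloring} these points receive at most two colors.

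The core step is a parity count of colorful faces. Consider one side of a triangular face of $\graph$ with corner endpoints $c,c'$ and interior side nodes $v_1,\dots,v_j$. All of $\phi(c),\phi(v_1),\dots,\phi(v_j),\phi(c')$ lie on a single line, so their colors form a sequence using at most two colors. The number of red-blue edges of $\graph$ along this side equals the number of red-blue transitions in this sequence, and its parity is exactly the indicator that $\{\phi(c),\phi(c')\}$ is itself a red-blue pair. Summing over the three sides of a face $t$, the total number of red-blue edges of $\graph$ on the boundary of $t$ is odd if and only if $t$ is colorful (since a triangular face has $0$, $1$, or $2$ red-blue corner pairs, and exactly one red-blue pair precisely in the colorful case). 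Summing further over all $n$ triangular faces and using that every internal edge of $\graph$ is shared by exactly two such faces, the internal contributions cancel modulo~$2$, which yields
\[
\#\{\text{colorful faces}\}\equiv\#\{\text{red-blue boundary edges of }\graph\}\pmod{2}.
\]
The same per-side parity argument applied along each side of $P$ (subdivided by the boundary side nodes into boundary edges of $\graph$) shows that the right-hand side has the same parity as the number of red-blue sides of $P$, which is odd by hypothesis. Hence at least one colorful face $t^*$ exists.

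The $2$-adic valuation step then closes the argument. By Lemma~\ref{lem:property_coloring}, $|a_\phi(t^*)|_2\ge 4$, since the signed area and the unsigned area differ only by a sign and the valuation is insensitive to sign. On the other hand, because $n$ is odd and $E\in\mathbb{Z}$, we have $|E/n|_2=|E|_2\cdot|n|_2^{-1}\le 1$. Therefore $a_\phi(t^*)\neq E/n$, as required. Corollary~\ref{cor:area_invariance} is not formally invoked, but it is the reason the conclusion is interesting: it rules out the trivial ``escape'' of shifting all signed areas to a common value different from $E/n$.

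The main obstacle I anticipate is the parity bookkeeping \emph{without} appealing to the planar topology of $\phi(\graph)$, since $\phi$ need not be injective nor orientation-preserving and its image can self-intersect and fold. One must check that the identity above is really a statement about the abstract graph $\graph$ together with its combinatorially fixed face structure and the coloring of node images (which is canonically well defined as soon as those images lie in $\mathbb{R}^2$) — in particular that every internal edge of $\graph$ bounds exactly two triangular faces as abstract cycles, not as planar regions. Once this is made explicit, the rest is a direct transcription of the classical argument reviewed in Section~\ref{sec:monsky_revisit}.
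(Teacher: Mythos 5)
Your proof is correct and follows essentially the same route as the paper's: a Sperner-type parity count of red--blue edges over the triangular faces of $\graph$, using the collinearity property of constrained framed maps to control the colors of the side-node images, followed by the $2$-adic valuation argument on the resulting colorful face. (One small slip: Lemma~\ref{lem:property_coloring} gives valuation at least $2$, not $4$, for the area of a colorful triangle, but all that is needed is that this exceeds $|E/n|_2\le 1$, so the conclusion stands.)
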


\begin{proof}
The proof uses a parity argument analogous to the proof of Sperner's lemma.
We count the number of pairs
$(e,t)$, where $e$ is a red-blue edge of $\graph$ 
on the boundary of a triangular face~$t$ of $\graph$.
Internal edges of $\graph$ appear in two pairs, while boundary edges of $\graph$ appear in only one.
By Lemma~\ref{lem:property_coloring} and since $\phi$ is a constrained framed map, side nodes of $\graph$ lying on a side of $P$ of type
red-blue have to be red or blue with respect to $\phi$.
Because $P$ has an odd number of red-blue sides, we deduce that there is an odd number of red-blue boundary edges with respect to~$\phi$ in the skeleton graph~$\graph$.
Hence the number of above pairs is odd.

Again using the fact that $\phi$ is a constrained framed map, each colorful triangular face contributes an odd number of red-blue edges while any other
triangular face contributes an even number of red-blue edges.
This shows that the number of colorful triangular faces is congruent modulo 2 to the number of red-blue sides of $P$.
Since we assumed this number to be odd, $\graph$ has to contain a colorful triangular face with respect to $\phi$.
By Lemma~\ref{lem:property_coloring}, the corners of the colorful triangle cannot be collinear and as $E$ is an integer the signed area cannot be $\pm E/n$ with respect to $\phi$.
\end{proof}

\begin{remark}
The theorem falls back on a proof of Monsky's theorem.
However, it applies to the more general family of framed maps, which turns out to be essential to study how small the range of areas of dissections of the square can be. 
\end{remark}

If in a dissection all triangles have the same area, then the
unsigned area is $E/n$. However, constrained framed maps might contain
triangles of negative orientation, and therefore all triangles could
have the same unsigned area, different from $E/n$.
An example of this with an even
 number of triangles is given in
Example~\ref{strange} and shown in
 Figure~\ref{fig:ex_frame_emb2} below.
 The following corollary rules
out this possibility when $n$ is odd.

\begin{corollary}
If $P$ has an odd number of red-blue sides, then the triangular faces of $\graph$ cannot all have the same unsigned area with respect to $\phi$.
\end{corollary}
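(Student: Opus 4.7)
The proof strategy is a contradiction argument that recycles the parity/colorfulness mechanism from the proof of Theorem~\ref{thm:frame_monsky}, combined with Corollary~\ref{cor:area_invariance} and the $2$-adic estimate of Lemma~\ref{lem:property_coloring}.

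First I would assume, for contradiction, that all triangular faces have the same unsigned area $A>0$. Writing $a_\phi(t_i) = \epsilon_i A$ with $\epsilon_i \in \{+1,-1\}$, and letting $p$ and $q$ denote the number of positive and negative signs (so $p+q=n$), Corollary~\ref{cor:area_invariance} gives
\[
E \;=\; \sum_{i=1}^{n} a_\phi(t_i) \;=\; (p-q)\,A.
\]
Since $n$ is odd, $p-q$ has the same parity as $n$, so $p-q$ is odd; it is also nonzero, as otherwise $E=0$ contradicts $E\geq 1$. Hence $A = E/(p-q)$, an integer divided by an odd integer, which yields $|A|_2 \leq |E|_2 \leq 1$.

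Next I would invoke the parity argument used inside the proof of Theorem~\ref{thm:frame_monsky}: counting incidences of red-blue edges with triangular faces in $\graph$ and using the constrained-framed-map property plus the odd number of red-blue sides of $P$, one concludes that $\graph$ must contain at least one colorful triangular face $f$ with respect to $\phi$. By Lemma~\ref{lem:property_coloring}, the signed area of any colorful triangle satisfies $|a_\phi(f)|_2 \geq 2$. But by our hypothesis $|a_\phi(f)| = A$, so $|a_\phi(f)|_2 = |A|_2 \leq 1$, contradicting $|a_\phi(f)|_2 \geq 2$.

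The only subtle point—and the main obstacle I would flag—is that the statement of Theorem~\ref{thm:frame_monsky} as given (some signed area differs from $E/n$) is not by itself sufficient: it does not rule out all signed areas being $\pm E/n$ with mixed orientations, which is exactly the scenario the corollary aims to exclude. The cleanest remedy is to reuse the \emph{intermediate conclusion} of the theorem's proof (existence of a colorful face with respect to $\phi$), rather than its stated conclusion; once that is in hand, the $2$-adic comparison $|A|_2 \leq 1 < 2 \leq |a_\phi(f)|_2$ finishes the argument immediately.
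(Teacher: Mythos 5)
Your proposal is correct and follows essentially the same route as the paper's proof: use Corollary~\ref{cor:area_invariance} to get $E=(p-q)A$ with $p-q$ odd (the paper writes this as $n-2b$), and then contradict the existence of a colorful face, whose area cannot be $\pm E/(\text{odd})$ by Lemma~\ref{lem:property_coloring}. Your remark that one must invoke the intermediate conclusion of the proof of Theorem~\ref{thm:frame_monsky} (existence of a colorful face), rather than its stated conclusion, is a correct and worthwhile observation about how the paper's own argument actually works.
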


\begin{proof}
We prove it by contradiction.
Let $\alpha := |a_\phi(t)|>0$ be the common area of the triangular
faces $t$ of $\graph$. 
Suppose there are $b$ triangular faces with negative signed area.
By Corollary~\ref{cor:area_invariance},
\[
	E=\sum_{i=1}^{n}a_\phi(t_i)= \underbrace{\alpha + \dots + \alpha}_{n-b \text{ times}} - \underbrace{(\alpha + \dots + \alpha)}_{b\text{ times}}= (n-2b)\cdot \alpha.
\]
We get 
 $\alpha=\frac{E}{n-2b}$,
with integral $E$ and odd $n-2b$.
By Lemma~\ref{lem:property_coloring}, a colorful triangular face cannot have area $\pm \alpha$. 
On the other hand, there exists a colorful triangular face, and we have a contradiction.
\end{proof}

Figure~\ref{fig:newpoly} shows a simple polygon satisfying the condition of the theorem.

\begin{figure}[!ht]
\begin{tikzpicture}[scale=0.8,
edge/.style={draw=blue!95!black,thick,line join=bevel,line cap=round},
vertexred/.style={rectangle,inner sep=3pt,fill=red},
vertexgreen/.style={circle,inner sep=2pt,fill=green},
vertexblue/.style={diamond,inner sep=2pt,fill=blue},
]

\coordinate (1) at (0,0);
\coordinate (2) at (2,0);
\coordinate (3) at (3,3);
\coordinate (4) at (1.5,2.5);
\coordinate (5) at (2,5);
\coordinate (6) at (-2,4);
\coordinate (7) at (0,3);
\coordinate (8) at (-2,2);

\fill[fill=blue!20!white] (1) -- (2) -- (3) -- (4) -- (5) -- (6) -- (7) -- (8) -- cycle;
\draw[help lines,step=1] (-2,0) grid (3,5);
\draw[edge] (3) -- (4) -- (5) -- (6) -- (7) -- (8) -- (1) -- (2);
\draw[edge,line width=2pt,loosely dashed] (2) -- (3);

\node[label={[align=center]below:$(0,0)$}] at (1) {};
\node[label={[align=center]below:$(2,0)$}] at (2) {};
\node[label={[align=center]right:$(3,3)$}] at (3) {};
\node[label={[align=center]below:$(3/2,5/2)$}] at (4) {};
\node[label={[align=center]above:$(2,5)$}] at (5) {};
\node[label={[align=center]above:$(-2,4)$}] at (6) {};
\node[label={[align=center]left:$(0,3)$}] at (7) {};
\node[label={[align=center]left:$(-2,2)$}] at (8) {};

\node[vertexblue] at (1) {};
\node[vertexblue] at (2) {};
\node[vertexred] at (3) {};
\node[vertexred] at (4) {};
\node[vertexgreen] at (5) {};
\node[vertexblue] at (6) {};
\node[vertexgreen] at (7) {};
\node[vertexblue] at (8) {};

\end{tikzpicture}
\caption{A simple polygon $P$ where Monsky's coloring approach
  applies. It has area $13$ and one side of $P$ is of type red-blue,
 dashed and fattened with a red square
and a blue diamond as corners.}\label{fig:newpoly}
\end{figure}
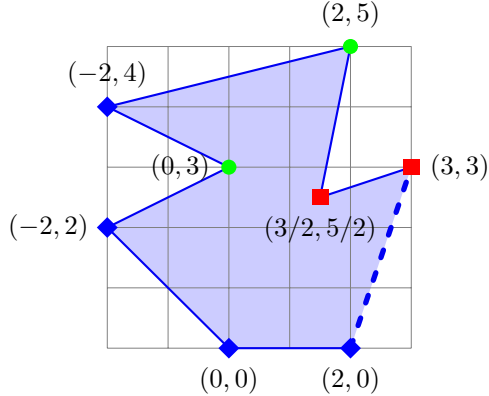

The following example emphasizes that the previous theorem concerns constrained framed maps of skeleton graphs of dissections and not the more general framed maps.

\begin{example}\label{strange}
It is possible to find a framed map of the skeleton graph of a dissection where all signed areas are equal to the average $E/n$, emphasizing that this is
possible for framed maps that are not constrained framed maps.
Consider the two framed maps shown in Figure~\ref{fig:ex_frame_emb}.
In the framed map shown on the right, the coordinates of the three internal nodes are $a=(\frac{2}{5},\sqrt{\frac{14}{25}})$,
$b=(\frac{3}{5},\frac{3}{5})$, and $c=(\sqrt{\frac{14}{25}},\frac{2}{5})$.
With these coordinates, the signed area of the five bounded faces are all equal to $1/5$, but the regions determined by the bounded faces \emph{are not} triangles anymore.
It is also possible to find constrained framed maps where all unsigned areas are equal as shown in Figure~\ref{fig:ex_frame_emb2}.
\end{example}

\begin{figure}[!ht]
\begin{tabular}{c@{\hspace{2cm}}c}

\begin{tikzpicture}[scale=2.5,
edge/.style={color=blue!95!black,thick,line join=bevel,line cap=round},
vertex/.style={circle,inner sep=1pt,fill=blue}]

\coordinate (1) at (0.000000,0.000000);
\coordinate (2) at (1.00000,0.000000);
\coordinate (3) at (1.00000,1.00000);
\coordinate (4) at (0.000000,1.00000);
\coordinate (5) at (0.333333,0.666667);
\coordinate (6) at (0.500000,0.500000);
\coordinate (7) at (0.666667,0.333333);

\draw[edge] (1) -- (2);
\draw[edge] (1) -- (4);
\draw[edge] (1) -- (5);
\draw[edge] (1) -- (7);
\draw[edge] (2) -- (3);
\draw[edge] (2) -- (7);
\draw[edge] (3) -- (4);
\draw[edge] (3) -- (6);
\draw[edge] (4) -- (5);
\draw[edge] (5) -- (6);
\draw[edge] (6) -- (7);

\node[vertex] at (1) {};
\node[vertex] at (2) {};
\node[vertex] at (3) {};
\node[vertex] at (4) {};
\node[vertex,label=above:$a$] at (5) {};
\node[vertex,label=below left:$b$] at (6) {};
\node[vertex,label=below:$c$] at (7) {};

\end{tikzpicture}

&

\begin{tikzpicture}[scale=2.5,
edge/.style={color=blue!95!black,thick,line join=bevel,line cap=round},
vertex/.style={circle,inner sep=1pt,fill=blue}]

\coordinate (1) at (0.000000,0.000000);
\coordinate (2) at (1.00000,0.000000);
\coordinate (3) at (1.00000,1.00000);
\coordinate (4) at (0.000000,1.00000);
\coordinate (5) at (0.4,0.748331477);
\coordinate (6) at (0.6,0.6);
\coordinate (7) at (0.748331477,0.4);

\draw[edge] (1) -- (2);
\draw[edge] (1) -- (4);
\draw[edge] (1) -- (5);
\draw[edge] (1) -- (7);
\draw[edge] (2) -- (3);
\draw[edge] (2) -- (7);
\draw[edge] (3) -- (4);
\draw[edge] (3) -- (6);
\draw[edge] (4) -- (5);
\draw[edge] (5) -- (6);
\draw[edge] (6) -- (7);

\node[vertex] at (1) {};
\node[vertex] at (2) {};
\node[vertex] at (3) {};
\node[vertex] at (4) {};
\node[vertex,label=above:$a$] at (5) {};
\node[vertex,label=below left:$b$] at (6) {};
\node[vertex,label=below:$c$] at (7) {};

\end{tikzpicture}

\end{tabular}
\caption{On the left, a dissection of the square into $5$ triangles.
On the right, a framed map of the skeleton graph of the same dissection which is not a constrained framed map.}\label{fig:ex_frame_emb}
\end{figure}
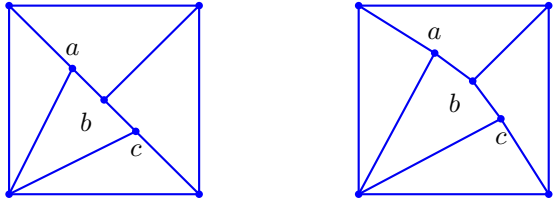

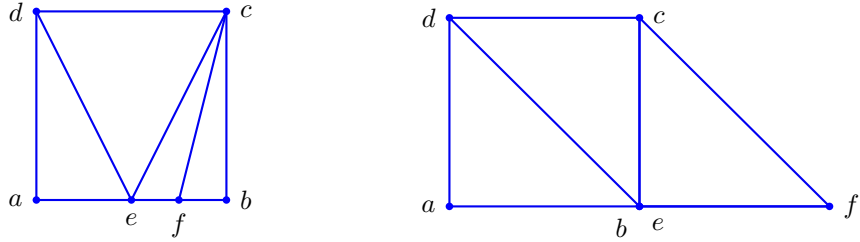
\begin{figure}[!ht]
\begin{tabular}{c@{\hspace{2cm}}c}

\begin{tikzpicture}[scale=2.5,
edge/.style={color=blue!95!black,thick,line join=bevel,line cap=round},
vertex/.style={circle,inner sep=1pt,fill=blue}]

\coordinate (1) at (0.000000,0.000000);
\coordinate (2) at (1.00000,0.000000);
\coordinate (3) at (1.00000,1.00000);
\coordinate (4) at (0.000000,1.00000);
\coordinate (5) at (0.500000,0);
\coordinate (6) at (0.7500000,0);

\draw[edge] (1) -- (4);
\draw[edge] (1) -- (5);
\draw[edge] (2) -- (3);
\draw[edge] (2) -- (6);
\draw[edge] (3) -- (4);
\draw[edge] (3) -- (5);
\draw[edge] (3) -- (6);
\draw[edge] (4) -- (5);
\draw[edge] (5) -- (6);

\node[vertex,label=left:$a$] at (1) {};
\node[vertex,label=right:$b$] at (2) {};
\node[vertex,label=right:$c$] at (3) {};
\node[vertex,label=left:$d$] at (4) {};
\node[vertex,label=below:$e$] at (5) {};
\node[vertex,label=below:$f$] at (6) {};

\end{tikzpicture}

&

\begin{tikzpicture}[scale=2.5,
edge/.style={color=blue!95!black,thick,line join=bevel,line cap=round},
vertex/.style={circle,inner sep=1pt,fill=blue}]

\coordinate (1) at (0.000000,0.000000);
\coordinate (2) at (1.00000,0.000000);
\coordinate (3) at (1.00000,1.00000);
\coordinate (4) at (0.000000,1.00000);
\coordinate (5) at (1,0);
\coordinate (6) at (2,0);

\draw[edge] (1) -- (4);
\draw[edge] (1) -- (5);
\draw[edge] (2) -- (3);
\draw[edge] (2) -- (6);
\draw[edge] (3) -- (4);
\draw[edge] (3) -- (5);
\draw[edge] (3) -- (6);
\draw[edge] (4) -- (5);
\draw[edge] (5) -- (6);

\node[vertex,label=left:$a$] at (1) {};
\node[vertex,label=below left:$b$] at (2) {};
\node[vertex,label=right:$c$] at (3) {};
\node[vertex,label=left:$d$] at (4) {};
\node[vertex,label=below right:$e$] at (5) {};
\node[vertex,label=right:$f$] at (6) {};

\end{tikzpicture}

\end{tabular}
\caption{On the left, a dissection of the square into $4$ triangles.
On the right, a constrained framed map of the skeleton graph of the same
dissection where all unsigned areas are equal to $1/2$.}\label{fig:ex_frame_emb2}
\end{figure}

\section{Area differences of dissections and framed maps}\label{sec:area-discrep}

In this section we set the stage to use a gap theorem to give a lower
bound the range of areas of dissections of the square.  Before
 introducing the area difference polynomial that we will use for this
 purpose, we want to point out that there is an alternative approach: 
Abrams and Pommersheim \cite{abrams_spaces_2014}
have recently shown
that the areas $a_1,\dots,a_n$ of a triangulation with a given combinatorial type
satisfy a non-trivial polynomial equation.
This opens up, in principle,
another way of obtaining a lower bound on the range of areas.
However, this polynomial typically has high degree
and seems hard to describe explicitly.

We consider the abstract dissection $\mathcal{D}=(\graph,B,C,\mathcal{T})$ arising from a dissection $D=\{t_1,t_2,\dots,t_n\}$  of a
dissection of a simple polygon $P$ of area $E$.
Let $X_{\mathcal{D}}:=\left((x_v,y_v)\right)_{v\in V(\graph)}$ be the plane coordinates for the nodes of $\graph$.
We consider $x_v$ and~$y_v$ as variables describing a framed map.
If $\graph$ has $M$ nodes,
$X_{\mathcal{D}}$ contains $2M$ variables. 

The area difference polynomial is a sum of three quadratic penalty terms:
The first term is the \emph{sum of squared residuals} of the signed areas of the
 triangular faces. It
is related to the RMS-error, but it avoids the square root and the
division by~$n$:
\begin{equation}
  \label{eq:SSR}
	\deltassr(X_\mathcal{D}):=\sum_{i\in[n]}\left(a(t_i)-\frac{E}{n}\right)^2
	= \RMS(X_\mathcal{D})^2\cdot n,
\end{equation}
with
\[
a(f) = \frac{1}{2}
\begin{vmatrix}
1 & 1 & 1 \\
x_{v_1} & x_{v_2} & x_{v_3} \\
y_{v_1} & y_{v_2} & y_{v_3}
\end{vmatrix},
\]
where $v_1,v_2$, and $v_3$ are the corner nodes of the triangular face $f$ of $\simplgraph$ ordered counterclockwise.

The second term takes care of collinearities
(condition~(i) from the
beginning of Section~\ref{ssec:framed_maps}).
If~$L$ is a reduced system of collinearity constraints of the dissection~$D$,
we denote by $\deltaL(X_\mathcal{D})$ the sum of squares of signed areas of
these constraints:
\[
	\deltaL(X_\mathcal{D}):=\sum_{\ell\in L}a(\ell)^2.
\]

Finally, we want the corners to lie on their assigned positions
(condition~(ii) from the
beginning of Section~\ref{ssec:framed_maps}).
Let $C$ be the set of corner nodes of the skeleton graph of $D$, and let $(p_v,q_v)$
for $v\in C$
 denote the
coordinates of the corners of~$P$.
We denote by $\deltaC(X_\mathcal{D})$ the sum of squared distances of the
corner nodes from their target positions:
\[
	\deltaC(X_\mathcal{D}):=
        \sum_{v\in C}\left((x_v-p_v)^2 + (y_v-q_v)^2\right).
\]

\begin{definition}[Area difference polynomial of an abstract dissection]
	Let $P$ be a simple polygon of area $E$ and $\mathcal{D}=(\graph,B,C,\mathcal{T})$ be an abstract dissection of $P$.
	The \Dfn{area difference polynomial} $\pi_\mathcal{D}\in\mathbb{R}\left[X_\mathcal{D}\right]$ of~$\mathcal{D}$ is the polynomial
	\begin{align*}
		\pi_\mathcal{D}(X_\mathcal{D})&=\deltassr(X_\mathcal{D})+\deltaL(X_\mathcal{D})+\deltaC(X_\mathcal{D})\\
	&=	\sum_{i\in[n]}\left(a(t_i)-\frac{E}{n}\right)^2 +
	        \sum_{\ell\in L}a(\ell)^2
	+
	        \sum_{v\in C}\left((x_v-p_v)^2 + (y_v-q_v)^2\right)
	,
	\end{align*}
	where $\{t_i~|~i\in[n]\}$ are the internal faces of $\graph$, $L$ is a reduced system of collinearity constraints, and $\{(p_v,q_v)~|~v\in C\}$ are the coordinates of the corners of $P$.
\end{definition}

The following lemma is an immediate consequence of this definition.  

\begin{lemma}\label{lem:discrep_poly}
Let $\mathcal{D}$ be an abstract dissection of a simple polygon $P$ of area $E$ with $n$ internal faces.
The area difference polynomial
${\pi}_\mathcal{D}(X_\mathcal{D})$ is always nonnegative, and it is zero if and only if 
$X_\mathcal{D}$ describes a constrained framed map and all signed areas of triangles of $\mathcal{D}$ are equal to $E/n$.\qed
\end{lemma}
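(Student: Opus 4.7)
The plan is to exploit the fact that $\pi_{\mathcal{D}}$ is written explicitly as a sum of squares of real-valued polynomial expressions in the entries of $X_\mathcal{D}$, namely the area residuals $a(t_i) - E/n$, the constraint areas $a(\ell)$ for $\ell \in L$, and the coordinate discrepancies $x_v - p_v$ and $y_v - q_v$ for $v \in C$. Nonnegativity is then immediate from the fact that squares of real numbers are nonnegative, and consequently $\pi_{\mathcal{D}}(X_\mathcal{D}) = 0$ forces each of $\deltassr$, $\deltaL$, $\deltaC$ to vanish, and in turn each individual squared summand inside them to vanish.

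Next I would translate each of these three vanishing conditions back into the geometric conditions that appear in the definition of a constrained framed map plus equal signed areas. First, $\deltaC(X_\mathcal{D}) = 0$ reads $(x_v, y_v) = (p_v, q_v)$ for every $v \in C$, which is precisely the defining condition that the map $\phi$ determined by $X_\mathcal{D}$ sends corner nodes to the corresponding corners of $P$, i.e.\ $\phi$ is a framed map. Second, $\deltassr(X_\mathcal{D}) = 0$ gives $a(t_i) = E/n$ for every triangular face $t_i$ of $\graph$, which is the equal signed area condition.

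Third, I would argue that $\deltaL(X_\mathcal{D}) = 0$ together with $\deltaC = 0$ is equivalent to $\phi$ being a constrained framed map. The vanishing of $\deltaL$ says that each triple in the reduced system $L$ is collinear. By the construction of $L$ in Definition~\ref{def:red_system}, the triples arising from a given side $cc'$ of a triangle (with intermediate side nodes $v_1, \ldots, v_j$ ordered from $c$ to $c'$) all share the anchor vertex $c$ in the form $\{c, v_i, v_{i+1}\}$ and $\{c, v_j, c'\}$; the vanishing of their signed areas chains through $c$ to force all of $c, v_1, \ldots, v_j, c'$ to lie on a common line through $c$, which is exactly the condition demanded on that side in the definition of a constrained framed map. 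Applying this to every side of every face of $\graph$ (outer face included, which is anchored via $\deltaC = 0$ to the prescribed corners of $P$) gives the constrained framed map property.

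The proof is thus essentially a bookkeeping exercise, and the only mildly delicate point will be the chaining argument for $\deltaL$: one must observe that the specific pattern in which the reduced system is built (all triples sharing the apex $c$ on a given side) is what allows the vanishing of only the reduced set of area polynomials to imply the full collinearity of every side. This is the main, though minor, obstacle; once it is noted, combining the three bullets yields the claim.
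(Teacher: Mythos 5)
Your overall plan coincides with what the paper intends (the paper in fact prints the lemma with a \verb|\qed| and no argument, calling it an immediate consequence of the definition): $\pi_\mathcal{D}$ is a sum of squares, hence nonnegative, and it vanishes iff every summand vanishes; the summands of $\deltaC$ give the framed-map condition, those of $\deltassr$ give the equal-area condition, and the only step that is not purely formal is upgrading the vanishing of the \emph{reduced} system $\deltaL$ to collinearity of every full side. You correctly isolate this as the delicate step, but your chaining argument for it has a genuine gap.

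The reduced triples on a side $cc'$ with side nodes $v_1,\dots,v_j$ are $(c,v_1,v_2),\dots,(c,v_{j-1},v_j),(c,v_j,c')$, all anchored at $c$. Two consecutive triples share the pair $\{c,v_i\}$, and they determine the \emph{same} line only when $\phi(v_i)\neq\phi(c)$. Nothing in the hypotheses forbids $\phi(v_i)=\phi(c)$: in that case every triple containing $v_i$ has vanishing signed area automatically (the determinant has two equal columns), the chain breaks at $v_i$, and, for instance, with $j=2$ and $\phi(v_2)=\phi(c)$ both constraints $(c,v_1,v_2)$ and $(c,v_2,c')$ vanish while $v_1$ is left completely unconstrained by $\deltaL$ --- so the side need not be collinear even though $\deltaL=0$ on it. Note that $v_i$ is a side node, not a corner, of the face $t$ whose side this is, so $\deltassr=0$ does not obviously exclude $\phi(v_i)=\phi(c)$: the faces having $v_i$ as a corner need not have $c$ as a corner, hence their nonzero areas do not separate $\phi(v_i)$ from $\phi(c)$. (Example~\ref{illegal} in the paper shows the authors are well aware that the reduced system tolerates such coincidences.) To close the proof you must either show that these degenerate coincidences are incompatible with the remaining vanishing conditions, or handle them directly; as written, the implication ``$\deltaL=\deltaC=0$ implies constrained framed map'' --- which is exactly the direction the paper later relies on in Theorem~\ref{thm:double_exp_lower} --- is not established. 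Everything else in your write-up (nonnegativity, the translation of $\deltaC$ and $\deltassr$, and the converse implication) is correct.
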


\section{Lower bound for the range of areas of dissections}\label{sec:lower_bound}

In this section, we use a gap theorem from real algebraic geometry as a black box to obtain a lower bound on the range of areas of dissections.
First we obtain the necessary conditions in Section~\ref{ssec:prop_area_dev_poly} and then apply the theorem in Section~\ref{ssec:lower_bound}.

\subsection{Properties of the area difference polynomial}\label{ssec:prop_area_dev_poly}

\begin{proposition}\label{prop:discrep_properties}
  Let $P$ be a simple polygon of area $E$ and $D$ a dissection of~$P$
  into~$n$ triangles.  The area difference polynomial~$\pi_\mathcal{D}$ has the
  following properties.
\begin{enumerate}[label={\rm(\roman{enumi})},ref=(\roman{enumi})]
	\item It has degree $4$. \label{prop:discrep_properties_i}
	\item The number of variables is at most
$2n+4$. \label{prop:discrep_properties_ii}

	\item
If all corner coordinates are between $0$ and $b$, then
the constant term of $\pi_\mathcal{D}$ is bounded in absolute value by 
${E^2/n}+(2n+4) b^2$, and
 the remaining coefficients are
 bounded in
          absolute value by 
$\max\left\{1,{E/n},2b\right\}$.
 \label{cor:discrep_properties_iii}
	\item
If the area $E$ and all corner coordinates $(p_v,q_v)$ of $P$ are multiples of $1/s$ for
some integer~$s$, then
the polynomial $4ns^2\pi _\mathcal{D}$ is an integer polynomial. \label{cor:discrep_properties_iv}

\end{enumerate}
\end{proposition}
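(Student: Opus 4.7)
The plan is to decompose $\pi_\mathcal{D}=\deltassr+\deltaL+\deltaC$ and analyze each of the three summands separately, starting from the observation that each signed area $a(f)$, being $\tfrac12$ times a $3\times 3$ determinant with two coordinate rows, expands as a homogeneous polynomial of degree exactly $2$ whose monomials are all of the form $\pm\tfrac12\,x_{v_i}y_{v_j}$ with $i\ne j$. Consequently $a(f)^2$ has degree $4$ with all monomial coefficients in $\{0,\pm\tfrac14,\pm\tfrac12\}$. Part~\ref{prop:discrep_properties_i} follows immediately: $\deltassr$ and $\deltaL$ have degree $4$ while $\deltaC$ has degree $2$. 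Part~\ref{prop:discrep_properties_ii} follows from Lemma~\ref{lem:combinatorial_prop}, which bounds the number of nodes of~$\graph$ by $n+2$, each contributing two variables.

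For part~\ref{cor:discrep_properties_iii}, I would identify the total contribution to each monomial type. The only constant contribution comes from $\sum_{i=1}^n(E/n)^2=E^2/n$ in $\deltassr$ together with $\sum_{v\in C}(p_v^2+q_v^2)\le 2|C|b^2\le(2n+4)b^2$ in $\deltaC$, using $|C|\le n+2$ from Lemma~\ref{lem:combinatorial_prop}. For the non-constant coefficients I would distinguish three cases. The degree-$2$ monomials $x_v y_w$ arising from the cross term $-\tfrac{2E}{n}\sum_i a(t_i)$ can only be contributed by triangles having both $v$ and~$w$ as corners, in which case $vw$ is a side and hence an edge of $\simplgraph$; the contributions of the two flanking triangles cancel on interior edges (consistent with $\sum_i a(t_i)=E$ being a boundary-determined constant), so the final coefficient is at most $E/n$ in absolute value. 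The monomials from $\deltaC$ have coefficients $1,\,-2p_v,\,-2q_v$, at most $\max\{1,2b\}$. Finally, each degree-$3$ or degree-$4$ monomial from $a(f)^2$ receives contribution at most $\tfrac12$ per face, and any fixed such monomial can only be contributed by faces of $\simplgraph$ whose corner set contains the indices of the monomial; a case distinction on these indices shows that either the triple is forced (contributing through at most one face) or an edge $vw$ is forced (contributing through at most two faces), keeping the total coefficient at most~$1$.

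For part~\ref{cor:discrep_properties_iv}, I would simply track denominators under the hypothesis $E,\,p_v,\,q_v\in\tfrac1s\mathbb{Z}$. Coefficients of $a(f)^2$ lie in $\tfrac14\mathbb{Z}$; coefficients of $-\tfrac{2E}{n}a(t_i)$ lie in $\tfrac1{ns}\mathbb{Z}$; the constant $E^2/n$ lies in $\tfrac1{ns^2}\mathbb{Z}$; and the contributions $x_v^2-2p_v x_v+p_v^2$ from $\deltaC$ lie in $\tfrac1{s^2}\mathbb{Z}$. Multiplying by $4ns^2$ clears every denominator.

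The main obstacle is the case analysis in part~\ref{cor:discrep_properties_iii}, specifically controlling the cumulative monomial contribution of $a(f)^2$ summed over all triangular faces and all collinearity constraints without letting the bound grow with~$n$. The crucial combinatorial input is the local finiteness inherited from the planarity of $\simplgraph$: any two vertices that simultaneously appear as corners of some triangular face of $\simplgraph$ are joined by a side of that face, and each edge of $\simplgraph$ bounds at most two such faces. This planarity-driven bound, together with the fact that a triangle is determined by its corner set in the embedded simplicial complex $\simplgraph$, keeps the coefficient of any single monomial uniformly bounded.
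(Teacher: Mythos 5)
Your proposal is correct and takes essentially the same route as the paper's proof: split $\pi_\mathcal{D}$ into $\deltassr+\deltaL+\deltaC$, expand into the degree-$4$, degree-$2$ and constant pieces, and bound each monomial's total coefficient using the simplicial/planar structure of $\simplgraph$ (a vertex triple determines at most one face, an edge lies in at most two faces, and opposite orientations cancel in the $-\tfrac{2E}{n}\sum_i a(t_i)$ term), which is exactly how the paper controls $S_4$ and $S_2$. Two cosmetic remarks, neither load-bearing: since $a(f)$ is homogeneous of degree $2$, the polynomial $a(f)^2$ has no degree-$3$ monomials, so that case is vacuous; and the parenthetical assertion that $\sum_i a(t_i)=E$ as a polynomial identity is false once collinearity constraints are present (by Proposition~\ref{prop:area_invariance} only $\sum_i a(t_i)+\sum_{\ell\in L}a(\ell)$ is independent of the non-corner node positions), though your actual bound of $E/n$ per mixed degree-$2$ monomial stands regardless.
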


\begin{proof}
\ref{prop:discrep_properties_i} This is straightforward from the definition.

\ref{prop:discrep_properties_ii} There are two variables per node, and
by Lemma~\ref{lem:combinatorial_prop}, the number of nodes is at most $n+2$.

\ref{cor:discrep_properties_iii}
We first analyze the first two components $\deltassr+\deltaL$ of
	$\pi_\mathcal{D}$.  Expanding, grouping the terms
by degree, and denoting the faces of the simplicial graph $\simplgraph$ by $F(\simplgraph)$, we get
\begin{align*}\nonumber  
\deltassr(X_\mathcal{D})+\deltaL(X_\mathcal{D}) & = 
\underbrace{\sum_{f\in F(\simplgraph)} a(f)^2}_{S_4} - \underbrace{\frac{2E}{n}\sum_{i\in [n]}
a(t_i)}_{S_2}+\underbrace{\frac{E^2}{n}}_{S_0}
\end{align*}
We proceed to compute the coefficients in $S_4$ and $S_2$.
The monomials in the area formula
\[
a(f) = \tfrac{1}{2}
\bigl(
(x_{v_1}y_{v_2}-x_{v_2}y_{v_1}) +
(x_{v_2}y_{v_3}-x_{v_3}y_{v_2}) +
(x_{v_3}y_{v_1}-x_{v_1}y_{v_3})
\bigr)
\]
are grouped into three pairs, each corresponding to an edge of~$f$.
The term $a(f)^2$ has coefficients $\frac14,\pm\frac12$.
If an edge belongs to two triangles, the
 square of the corresponding term,
which has coefficients $\frac14$ and~$-\frac12$, will be taken twice,
contributing terms with coefficients $\frac12$ and $-1$.
All other terms appear only once.
Thus the coefficients in $S_4$ are in $\left\{\frac{1}{4},\pm\frac{1}{2},-1\right\}$.

In $S_2$, the monomials for an edge which is a side of two triangles of $D$ cancel
because they appear in opposite orientations.
The remaining terms appear once, and hence
the coefficients in $S_2$ are $\pm\frac{E}{n}$.
Since the terms of $S_4$ are of degree 4 and 
 the terms of $S_2$ are of degree 2, there is no interference between
the parts.
Thus the coefficients of
$\deltassr+\deltaL$ are in
$\left\{\frac{1}{4},\pm\frac{1}{2},-1,\pm\frac{E}{n}\right\}$.

We still have to add the terms in $\deltaC$ for the corner
coordinates.
They are of the form
$x_v^2-2x_vp_v + p_v^2 +
y_v^2-2y_vq_v + q_v^2$,
with $0\le p_v,q_v\le b$.
There is at most one constant term 
($p_v^2$ or~$q_v^2$)
per corner variable, of absolute
value at most $b^2$, and since there are at most $2n+4$ variables in
total, this establishes the bound
${E^2}/{n}+(2n+4) b^2$
 on the overall
constant term, including the constant term $S_0=E^2/n$ from the
first two parts.

The coefficients of the quadratic terms are 1, and the coefficients of
the linear terms are bounded by $2b$ in absolute value.
Since the degree-2 terms in
 $\deltaC$ are purely quadratic
 and
 the degree-2 terms in $S_2$ are mixed, there is no interference
 between the different subexpressions.
Overall, we get the claimed bound on the size of the coefficients.

 \ref{cor:discrep_properties_iv}
From the above calculations we see that
all coefficients are multiples of $1/4$ or of $1/(ns^2)$. Thus
multiplication by $4ns^2$ makes every coefficient integral.
\end{proof}

\subsection{Lower bound using a gap theorem}\label{ssec:lower_bound}

To derive
the lower bound
on $\pi_\mathcal{D}$, we use the following gap theorem.
The domain over which the polynomial is minimized is the
$k$-dimensional simplex $\Sigma_k:=\{x\in \mathbb{R}^k_{\geq 0}:
\sum_{i=i}^k x_i\leq 1\}$.

\begin{theorem}[{Emiris--Mourrain--Tsigaridas~\cite[Section~4]{emiris_dmm_2010}}]\label{thm:EMT_gap}
Let $f\in \mathbb{Z}[X_1,\dots,X_k]$ be a multivariate polynomial of
total degree $d$ which is positive on the $k$-simplex $\Sigma_k$ and has
coefficients bounded by $2^\tau$. The minimal value $m:=\min \{f(x):
x\in \Sigma_k\}$ of $f$ on $\Sigma_k$ is bounded from below by 
\[ 
{m} \geq {m_{\normalfont\texttt{DMM}}},
\]
where
\begin{equation}
\frac{1}{m_{\normalfont\texttt{DMM}}} = 
2^{d(d - 1)^{(k - 1)}\left(
(k^2 + 3k + 1)\log_2{d}
+
(k + 1) (d\log_2{k} + \tau)
+ 3k + d + 2  
\right)} \times 2^{(k^2 +
  k)\log_2{\sqrt{d}}}. 
\label{eq:EMT} 
\end{equation}
\end{theorem}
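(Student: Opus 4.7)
The plan is to exploit the fact that the minimum value $m$ is an algebraic number whose degree and height can be controlled explicitly in terms of $d,k,\tau$, and then to finish with a classical root-separation bound. Since $f$ is continuous on the compact simplex $\Sigma_k$, the infimum $m>0$ is attained at some $x^\ast\in\Sigma_k$. This $x^\ast$ lies in the relative interior of some face $F$ of $\Sigma_k$; by Lagrange multipliers (or equivalently by restricting $f$ to the affine hull of $F$ and setting the restricted gradient to zero), $x^\ast$ is an interior critical point of $f|_F$ and hence satisfies a polynomial system of at most $\dim F$ equations, each of degree at most $d-1$. Because $f\in\mathbb{Z}[X_1,\dots,X_k]$, this system is defined over $\mathbb{Q}$, so the coordinates of $x^\ast$ are algebraic over $\mathbb{Q}$.

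Next, one stratifies $\Sigma_k$ into its faces and applies this analysis to each stratum. After a symbolic perturbation of $f$ (removed in a limit argument) one may assume that each stratum's critical-point system is zero-dimensional, so B\'ezout's theorem bounds the number of complex critical points on an $r$-dimensional face by roughly $(d-1)^r$. Elimination theory (iterated resultants, a Gr\"obner-basis argument, or a $u$-resultant) then produces a nonzero univariate polynomial $P(T)\in\mathbb{Z}[T]$ vanishing at $m$, whose degree is bounded by the sum of the B\'ezout numbers across strata and whose logarithmic height is obtained from the coefficient bound $2^\tau$, multiplied through the elimination step by factors controlled by $d$ and~$k$. The combinatorial shape $d(d-1)^{k-1}$ of the final exponent already appears at this stage, as the dominant Bezout bound on the top-dimensional face.

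To close, one invokes a Cauchy-type inequality: any nonzero root $\alpha$ of $P\in\mathbb{Z}[T]$ satisfies $|\alpha|\ge 1/(1+\|P\|_\infty)$, and a sharper Davenport--Mahler--Mignotte aggregate bound can reduce the constants. Inserting the explicit bounds for $\deg P$ and $\log_2\|P\|_\infty$ yields the stated inequality~\eqref{eq:EMT}; in particular, the trailing factor $2^{(k^2+k)\log_2\sqrt{d}}$ corresponds to the correction for passing from a lower bound on the Mahler measure of $P$ to a lower bound on an individual root.

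The principal obstacle is the careful tracking of heights through the elimination step: if one iterates resultants naively, the height grows doubly-exponentially in $k$, whereas the theorem requires the dependence to stay single-exponential in $k$ as in $(d-1)^{k-1}$. This forces the proof to bound the Mahler measure of the univariate projection directly from a mixed-volume argument together with the height of $f$, instead of composing resultants in sequence. A secondary technical point is the treatment of non-isolated or degenerate critical points on lower-dimensional faces of $\Sigma_k$, which I would handle by a standard perturbation-and-limit argument that replaces $f$ by $f+\varepsilon g$ for a suitable generic $g$ and then takes $\varepsilon\to 0$, since $m$ depends upper-semicontinuously on $\varepsilon$.
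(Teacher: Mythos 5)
You should first be aware that the paper does not prove this statement at all: Theorem~\ref{thm:EMT_gap} is imported verbatim as a black box from Emiris--Mourrain--Tsigaridas, and the paper's only contribution to it is editorial --- restoring a factor $d(d-1)^{k-1}$ in the exponent of \eqref{eq:EMT} that had been lost in the published version of the source (confirmed by its authors). So there is no in-paper proof to compare your attempt against; the relevant benchmark is the cited work itself.

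Measured against that, your outline captures the correct high-level strategy --- the minimum is attained at a critical point of $f$ restricted to some face of $\Sigma_k$, the critical systems are zero-dimensional after perturbation and obey B\'ezout bounds, elimination yields an integer univariate polynomial annihilating $m$, and a root-separation bound of Davenport--Mahler--Mignotte type finishes. But as written it is a roadmap, not a proof: the two steps that actually produce the quantitative content of \eqref{eq:EMT} are precisely the ones you defer. You need explicit degree \emph{and} height bounds for the eliminant $P(T)$ annihilating the critical \emph{value} $m=f(x^\ast)$, not merely the critical point; in the source this is done via sparse resultants and rational univariate representations with height growth tracked so that the dependence on $k$ stays single-exponential --- you correctly identify that naive iterated resultants fail here, but you do not supply the replacement argument. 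Likewise, the inner expression $(k^2+3k+1)\log_2 d+(k+1)(d\log_2 k+\tau)+3k+d+2$ and the trailing factor $2^{(k^2+k)\log_2\sqrt d}$ come out of the specific aggregate separation bound of the cited paper and cannot be recovered from a generic inequality $|\alpha|\ge 1/(1+\|P\|_\infty)$ without those explicit estimates. The skeleton is right; all the constants in \eqref{eq:EMT} live in the parts you have left open.
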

 The subscript \texttt{DMM} stands for Davenport--Mahler--Mignotte.
In the published version of \cite[formula~(22)]{emiris_dmm_2010}, a  
term $d(d-1)^{k-1}$ in the exponent of~\eqref{eq:EMT} was lost by
splitting the expression over two lines.
This was confirmed by the authors (personal communication);
the above theorem corrects the omission. 

We are now ready to deduce our main lower bound.

\begin{theorem}[Doubly exponential lower bound on range]\label{thm:double_exp_lower}
Let $P$ be a simple polygon of integer area $E$
 with integer corner coordinates and
 an odd number of red-blue sides.
The range of any dissection of $P$ into an odd number $n$ of triangles
 is bounded from below by 
\[
	\frac{1}{2^{O(9^{n}n^2)}}, 
\] 
where the constant implied by the $O$-notation depends on~$P$.
\end{theorem}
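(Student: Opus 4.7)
The plan is to combine Monsky's theorem for constrained framed maps (Theorem~\ref{thm:frame_monsky}) with the Emiris--Mourrain--Tsigaridas gap theorem (Theorem~\ref{thm:EMT_gap}) applied to the area difference polynomial $\pi_\mathcal{D}$ of the abstract dissection $\mathcal{D}$ coming from any candidate dissection $D$ of $P$. The first observation is that under the hypotheses of the theorem, $\pi_\mathcal{D}$ is \emph{strictly positive everywhere} on $\mathbb{R}^{2M}$, where $M$ is the number of nodes of $\graph$: a zero of $\pi_\mathcal{D}$ would, by Lemma~\ref{lem:discrep_poly}, be a constrained framed map whose triangular faces all have signed area $E/n$, and this is ruled out by Theorem~\ref{thm:frame_monsky}. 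In particular $\pi_\mathcal{D}(\phi_D)>0$ for the framed map $\phi_D$ arising from an actual dissection~$D$; moreover, at $\phi_D$ the collinearity and corner penalty terms both vanish, so $\pi_\mathcal{D}(\phi_D)=\deltassr(\phi_D)=n\cdot\RMS(D)^2$ by \eqref{eq:SSR}.

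The next step is to turn this qualitative positivity into a quantitative lower bound via Theorem~\ref{thm:EMT_gap}. Proposition~\ref{prop:discrep_properties} supplies exactly the required ingredients: degree $d=4$, number of variables $k\le 2n+4$, and after multiplying by $4n$ (which is legitimate because $E$ and all corner coordinates of~$P$ are integers) one obtains an integer polynomial with explicitly bounded coefficients. Since Theorem~\ref{thm:EMT_gap} lives on the simplex $\Sigma_k$ whereas framed maps naturally take values in a bounded box containing~$P$, I would perform an affine rescaling: letting $b$ be an integer upper bound on the coordinates of $P$, substitute $x_v=bk\,u_v$ and $y_v=bk\,w_v$. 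Then any dissection of~$P$ satisfies $u_v,w_v\in[0,1/k]$, so $\sum_v(u_v+w_v)\le 2M/k\le 1$ and the point corresponding to $\phi_D$ lies in $\Sigma_k$. The substitution multiplies each coefficient by at most $(bk)^4$; combined with Proposition~\ref{prop:discrep_properties}\ref{cor:discrep_properties_iii}, the resulting integer polynomial has coefficients bounded by $2^\tau$ with $\tau=O(\log n)$, where the constant depends on $E$ and~$b$ (hence on~$P$).

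Plugging $d=4$, $k\le 2n+4$, and $\tau=O(\log n)$ into formula~\eqref{eq:EMT}, the dominant factor in the exponent is $d(d-1)^{k-1}\cdot(k+1)(d\log_2 k+\tau)=4\cdot 3^{k-1}\cdot O(k\log k)=O(9^{n}\,n^{2})$, so the minimum of the rescaled polynomial on $\Sigma_k$ is at least $1/2^{O(9^n n^2)}$. Undoing the linear rescaling and the integer-clearing factor $4n$ shifts the exponent only by an additive $O(n\log(nb))$, which is absorbed into $O(9^n n^2)$. This gives $\pi_\mathcal{D}(\phi_D)\ge 1/2^{O(9^n n^2)}$, whence by the identity $\pi_\mathcal{D}(\phi_D)=n\,\RMS(D)^2$ one obtains $\RMS(D)\ge 1/2^{O(9^n n^2)}$, and Proposition~\ref{prop:lower_bound} yields the same bound for $\range(D)$. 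The main technical obstacle is the careful bookkeeping in the rescaling step: the affine substitution and the integer-clearing factor both inflate $\tau$, and one has to verify that these contributions stay $O(\log n)$ so that the product with $3^{k-1}$ does not escape the claimed $9^n n^2$ exponent. A secondary but routine issue is to confirm that the minimization over $\Sigma_k$ really controls the value at $\phi_D$, which is precisely the purpose of choosing the scaling factor $bk$ rather than~$b$.
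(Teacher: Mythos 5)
Your proposal is correct and follows essentially the same route as the paper: positivity of $\pi_\mathcal{D}$ via Theorem~\ref{thm:frame_monsky} and Lemma~\ref{lem:discrep_poly}, an affine rescaling to place the configuration in the simplex $\Sigma_k$, clearing denominators to get an integer polynomial with $\tau=O(\log n)$, applying Theorem~\ref{thm:EMT_gap} with $d=4$ and $k\le 2n+4$, and passing back to $\range$ through $\RMS$ via Proposition~\ref{prop:lower_bound}. The only (harmless) omission is the initial translation making all corner coordinates nonnegative, which is needed for the scaled variables to land in $\Sigma_k$ and is legitimate because nonexistence of equal-area constrained framed maps is translation-invariant even though Monsky's coloring is not.
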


\begin{proof}
We apply a translation so that the coordinates of the corners of $P$
are nonnegative integers and bounded above by $Y$, for some constant
$Y\geq1$ that depends on $P$.

Consider a dissection $D$, and
let $k$ denote the number of variables of $\pi _\mathcal{D}$.
By Proposition~\ref{prop:discrep_properties}, $k\le X:=2n+4$.
To ensure that the minimum we are looking for lies in the simplex $\Sigma_k$,
we apply a second linear transformation, multiplying the coordinates
by $1/XY$.
We obtain a polygon $P'$ of area $E'=E/(XY)^2$ where the sum of the  node coordinates in any
dissection of $P'$ is at most $1$.

By Theorem~\ref{thm:frame_monsky}, there is no dissection $D$ of the
original polygon $P$ (before the translation) with all areas equal to
$E/n$.  It follows that the translated and scaled
polygon $P'$ also cannot have a dissection $D'$ with all areas equal
to $E'/n$.  By Lemma~\ref{lem:discrep_poly}, the area
difference polynomial $\pi _{\mathcal{D}'}$ is therefore positive.

To apply Theorem~\ref{thm:EMT_gap}, we need to make the coefficients
of the polynomial integral, and we need to know a bound on the size of
the coefficients.
With the help of
Proposition~\ref{prop:discrep_properties}\ref{cor:discrep_properties_iii},
it is easy to establish that the largest coefficient
of $\pi_{\mathcal{D}'}$ is~1:
The corners of the polygon $P'$ lie in a square of side length $b=1/X$,
and hence its area $E'$ is bounded by $1/X^2$.
Thus the constant term is at most
${E'^2/n}+(2n+4) b^2 \le 1/(X^2n)+X/X^2 <1/X+1/X<1$.
As for the other coefficients,
  the largest term in our bound
$\max\{1,\frac{E'}{n},2b\}$
 on these coefficients
is~$1$.

The area of $P'$ is $E/(XY)^2$, and its corner coordinates
are
multiples of
$1/(XY)$.
We can thus apply
Proposition~\ref{prop:discrep_properties}\ref{cor:discrep_properties_iv}
with $s=(XY)^2$ and
conclude that
$4ns^2\pi _{\mathcal{D}'}
=
4nX^4Y^{4}\pi _{\mathcal{D}'}$ 
is an integer polynomial.
Its
coefficients are bounded by
\[
	Q :=
4nX^4Y^{4}
        = O(nX^4)
        = O(n^5)
.
\]
We now apply Theorem~\ref{thm:EMT_gap} to the polynomial
$4nX^4Y^{4}\pi _{\mathcal{D}'}$, with $k\le X= 2n+4$ variables, degree
$d=4$, and coefficient bitsize
$\tau=\lceil\log_2 Q\rceil=O(\log n)$.
Substituting these data into \eqref{eq:EMT},
we obtain that
the minimum value $m$ of $4nX^4Y^{4}\pi _{\mathcal{D}'}$ on the
$k$-simplex satisfies
\begin{equation}
\label{d-exp-bound}
	\frac{1}{m} 
\leq 2^{4\cdot 3^{2n+4} O(n^2)}
\times 2^{O(n^2)}
\leq 2^{O(9^{n}n^2)}.
\end{equation}
The constant in the $O$-notation depends only on $P$ and not on the
dissection $D$.
To bound the minimum of $\pi _{\mathcal{D}'}$,
we have to divide $m$ by the factor $4n X^4Y^{4}$,
which was used to make the polynomial integral.
 \begin{align*}
\frac m{4n X^4Y^{4}}
= \min \{\,\pi_{\mathcal{D}'}(X_{\mathcal{D}'}) \,\}
 &=
\min\left\{ \,
\deltassr_{\mathcal{D}'}(X_{\mathcal{D}'})
 + \deltaL_{\mathcal{D}'}(X_{\mathcal{D}'}) + \deltaC_{\mathcal{D}'}(X_{\mathcal{D}'})
\,\right\}\\
&\le
\min\left\{ \,
\deltassr_{\mathcal{D}'}(X_{\mathcal{D}'}) \mid
 \deltaL_{\mathcal{D}'}(X_{\mathcal{D}'})=\deltaC_{\mathcal{D}'}(X_{\mathcal{D}'})=0
\,\right\}   
 \end{align*}
From the last expression and \eqref{d-exp-bound}
we conclude that, for any dissection of $P'$,
the sum of squared residuals
$\deltassr_{\mathcal{D}'}$ is at least
$1/2^{O(9^{n}n^2)}$.
(The polynomial factor $4n X^4Y^{4}=O(n^5)$ 
is swallowed
by the $O$-notation in the exponent.)

The range $\range$ is related to
the
sum of squared residuals
 $\deltassr$
 by taking the square root and a multiplicative factor which is at
least $1/\sqrt n$
(see equation~\eqref{eq:SSR} and Proposition~\ref{prop:lower_bound}).
 These operations do not change the
doubly-exponential character of the lower bound
$1/2^{O(9^{n}n^2)}$. 

Finally, we have to translate the result back to the original
polygon $P$. The area range is multiplied by $(XY)^2$
to compensate the scaling of~$P$. Again, this polynomial factor does not
influence the bound. This concludes the proof of the theorem.
\end{proof}

Since the unit square satisfies the assumptions of the theorem (cf.\
the beginning of Section~\ref{ssec:extension_monsky}) we have
established the lower-bound part of our main
result~\eqref{eq:main-reformed} as presented in the introduction.

\section{Enumeration and optimization results}\label{sec:experiment}

We have computed the best dissections of a square with respect to the \RMS~error, 
for small numbers of triangles.  For this purpose,
we enumerated all combinatorial types of dissections of the unit
square with a given number of nodes, and we 
minimized the \RMS\ area deviation for each type.

Below we describe our computational approach and report the
results.  Due to the combinatorial
explosion of the number of cases and the algebraic difficulty of solving each case,
we could only treat dissections with up to 8 nodes before we
reached the limit of computing power.
Our calculations complement earlier attempts of
Mansow~\cite{mansow_ungerade_2003}, who had considered only
triangulations, and optimized the range~\range~of the areas.

In Section~\ref{sec:upper_bounds}, we will report further
computational experiments on
dissections and triangulations with special
structure, which allowed us to treat larger numbers of triangles.
 
\subsection{Enumeration of combinatorial types}

To generate the combinatorial types of dissections of the
unit square, we used a combination of \plantri~\cite{plantri} and \Sage~\cite{sage}.
The software \plantri\ efficiently enumerates planar graphs with
prescribed properties.  
We used it to generate all $3$-connected planar graphs on $N+1$ nodes.  
For each graph, we choose one vertex to be ``at infinity'', and after
discarding it, we use its neighbors as boundary nodes.
Among the boundary nodes, we select four to be the corner nodes; the remaining boundary nodes get assigned to the sides of the square.
For each interior face of the graph,
we choose three nodes to be the corners of that triangular face.
There are many combinations of choices that do not lead to a valid combinatorial type of a dissection of a square, 
and these are discarded. Here are a few easy-to-state necessary conditions that we used (some others are more intricate):
\begin{itemize}
 \item A boundary node in $B$ cannot be a side node of a triangular face of $\graph$.
 \item An internal node cannot be in a collinearity constraint with
   two boundary nodes which lie on the same side of $P$.
 \item An internal node can be a side node of at most one
   triangular face of $\graph$.
 \item A series of collinearity constraints forces successive edges on
   a line segment (thus fulfilling their role; see for example nodes $1,2,3,4, 5$ in
   Figure~\ref{fig:does-not-enforce-collinearity}c). 
   It can happen that 
   (parts of) two such line segments are connected in such a way that
   they enclose some triangles between them. 
   Such a combinatorial type can be discarded. 
\end{itemize}
Furthermore, since $P$ is a square, we reduce number of
abstract dissections considered by using the symmetries of $P$.
 
\subsection{Finding the optimal dissection for each combinatorial type}

Once the combinatorial type is fixed, we can 
write down the area difference polynomial. 
We are interested in the minimum of
the sum of squared residuals
$\deltassr(X_\mathcal{D})$ under the side constraints
 $\deltaL(X_\mathcal{D})=\deltaC(X_\mathcal{D})=0$.
We take care of the framing constraint $\deltaC(X_\mathcal{D})=0$ by directly
substituting the desired corner coordinates into the polynomial
$\deltassr(X_\mathcal{D})$,
resulting in a polynomial
       $\hat{\pi}_\mathcal{D}(X'_\mathcal{D})$ with a reduced set of variables $X'_\mathcal{D}$.
 We then incorporate the constraint
 $\deltaL(X_\mathcal{D}')
=0$
 with
 a Lagrange multiplier $\gamma$ and
get the integer polynomial
\[
       \hat{\pi}_\mathcal{D}(X_\mathcal{D}',\gamma)
:=\hat{\pi}_\mathcal{D}(X_\mathcal{D}')
 +4\gamma\deltaL(X_\mathcal{D}').
\]
Then we set up a system of polynomial equations by setting
the gradient of $\hat{\pi}_\mathcal{D}(X_\mathcal{D}',\gamma)$ to~$0$.
This gives all critical points of
$\hat{\pi}_\mathcal{D}(X_\mathcal{D}',\gamma)$, including
 the configurations that represent legal dissections and minimize $\deltassr$.

To find all real solutions to the system, we use \Bertini~\cite{bertini},
a program that uses homotopy continuation to find numerical solutions of systems of polynomial equations. 
According to \Bertini's user manual \cite{bertini}, \Bertini\ finds all 
isolated solutions; nevertheless, this highly depends on the tolerance parameters and 
the dimension of the solution set. On the one hand, if the solution set to the system of polynomial 
equations is zero-dimensional, then one could opt to use Groebner bases to solve the system 
of polynomial equations. However already for $7$ nodes, 
computing the Groebner bases in the zero-dimensional cases to get all
solutions was hopeless on a large scale.
On the other hand, many combinatorial types had a solution sets of positive dimension.
Hence we do not claim that the solutions we found are optimal.

\subsection{Minimal area deviation for dissections with at most 8 nodes} 

The  process of generating combinatorial types of dissections with up to 8
nodes and computing
 coordinates with smallest $\deltassr$-deviation for each
of them was parallelized on 36 processors (i5 CPU@2.80GHz) and took 3~days.

In Table~\ref{tab:diss_discr}, we present the results for triangulations and dissections (that are not triangulations) of the square with $3,5,7$, and $9$ triangles.
We used the sum of squared residuals
$\deltassr$ in the computations, because it is a polynomial,
but the tables report the $\RMS$ numbers, because they are on the same
scale with the area range $\range$.

\begin{table}[htp]
  \centering
  \begin{tabular}{|l|@{\ \ }ll|@{\ \ }ll||l|}
\hline
&\multicolumn{4}{c||}{RMS-optimal dissections}
&\hfil\cite{mansow_ungerade_2003}\\
\hline
&\multicolumn{2}{c|@{\ \ }}{triangulations}
&\multicolumn{2}{c||}{dissections$^\dag$}
&\hfil triang.%
\\
&\hfil RMS&\hfil$\range$
&\hfil RMS&\hfil$\range$
&\hfil
$\range$
\\\hline
\hz3 triangles, \hz5 nodes
&  \llap{$^*$}$0.117\,851$ & 0.25
&  \llap{$^*$}$0.117\,851$ & 0.25
&0.25
\\\hline
\hz5 triangles, \hz6 nodes
& \llap{$^*$}0.010\,281\,9 & 0.026\,446\,6
& 0.040\,824\,8 & 0.083\,333\,3
& 0.0225
\\
\hz5 triangles, \hz7 nodes
& 0.040\,824\,8 & 0.083\,333\,3  & \llap{$^*$}0.010\,281\,9 & 0.026\,446\,6
& 0.0833
\\\hline
\hz7 triangles, \hz7 nodes
& 0.001\,301\,4 
& 0.004\,008\,1 
& 0.005\,134\,9 &0.012\,787\,9
&0.0031
\\
\hz7 triangles, \hz8 nodes
& 0.003\,284\,9 & 0.010\,214\,9&\llap{$^*$}0.000\,805\,1 & 
0.002\,320\,7
& 0.0077
\\
\hz7 triangles, \hz9 nodes
&--&--&--&--
& 0.0417
\\\hline
\hz9 triangles, \hz8 nodes
& 0.000\,395\,6
&\llap{$^\triangle$}%
0.001\,147\,9
& \llap{$^*$}0.000\,279\,1 &\llap{$^\triangle$}%
0.000\,961\,6
& 0.0011
\\
\hz9 triangles, \hz9 nodes
&--&--&--&--
& 0.0001408
\\
\hz9 triangles, 10 nodes
&--&--&--&--
& 0.0016
\\
\hz9 triangles, 11 nodes
&--&--&--&--
& 0.025
\\\hline
11 triangles, \hz9 nodes
&--&--&--&--
&0.000\,322\,2
\\
11 triangles, 10 nodes
&--&--&--&-- 
&0.000\,004\,2
\\
11 triangles, 11 nodes
&--&--&--&--
&0.000\,056\,9
\\
11 triangles, 12 nodes
&--&--&--&--
&0.000\,297\,6
\\
11 triangles, 13 nodes
&--&--&--&--
&0.016\,7
\\\hline
  \end{tabular}
\vspace{4mm}

  \caption{Triangulations and dissections of the square with at most
    $8$ nodes with the optimal \RMS~values. 
	The last column shows the results obtained by
	Mansow~\cite{mansow_ungerade_2003} for optimizing the range~\range~among triangulations,
	and for comparison, we include in the two center columns the best
	    ranges~$R$ that we found during our computations.
$^*$The best solutions that we found for a given number of triangles are
marked with a star. 
\\
$^\dag$The column for dissections includes only those dissections that are
not triangulations. 
\\
$^{\triangle}$For
$9$ triangles and $8$ nodes,
the combinatorial type that gave the smallest range was different from
the combinatorial type that gave the smallest \RMS~error. In the other rows, 
the adjacent columns~\RMS\ and~\range~refer to the same dissection.
}
  \label{tab:diss_discr} 
\end{table}

The \RMS-optimal dissections with $3,5,7$, and $9$ triangles
and
with at most $8$ nodes are shown 
in Figures~\ref{fig:opt_diss3}--\ref{fig:opt_diss9}.
By Lemma~\ref{lem:combinatorial_prop},
the number of nodes for a given number~$n$ of triangles can be as large as $n+2$. 
Thus the results for 7 and 9 triangles are not complete.

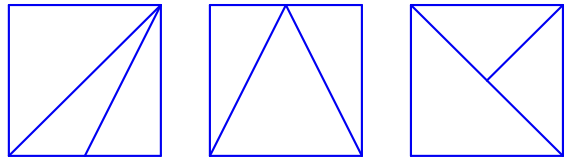
\begin{figure}[htp]
	\centering
	\begin{tabular}{c@{\hspace{0.5cm}}c@{\hspace{0.5cm}}c}
		\begin{tikzpicture}[scale=2,
edge/.style={color=blue!95!black,thick,line join=bevel,line cap=round}]

\coordinate (1) at (0.000000,0.000000);
\coordinate (2) at (0.500000,0.000000);
\coordinate (3) at (1.00000,0.000000);
\coordinate (4) at (1.00000,1.00000);
\coordinate (5) at (0.000000,1.00000);

\draw[edge] (1) -- (2);
\draw[edge] (1) -- (4);
\draw[edge] (1) -- (5);
\draw[edge] (2) -- (3);
\draw[edge] (2) -- (4);
\draw[edge] (3) -- (4);
\draw[edge] (4) -- (5);

\end{tikzpicture} & \begin{tikzpicture}[scale=2,
edge/.style={color=blue!95!black,thick,line join=bevel,line cap=round}]

\coordinate (1) at (0.000000,0.000000);
\coordinate (2) at (1.00000,0.000000);
\coordinate (3) at (1.00000,1.00000);
\coordinate (4) at (0.500000,1.00000);
\coordinate (5) at (0.000000,1.00000);

\draw[edge] (1) -- (2);
\draw[edge] (1) -- (4);
\draw[edge] (1) -- (5);
\draw[edge] (2) -- (3);
\draw[edge] (2) -- (4);
\draw[edge] (3) -- (4);
\draw[edge] (4) -- (5);


\end{tikzpicture} & \begin{tikzpicture}[scale=2,
edge/.style={color=blue!95!black,thick,line join=bevel,line cap=round}]

\coordinate (1) at (1.00000,1.00000);
\coordinate (2) at (0.000000,1.00000);
\coordinate (3) at (0.500000,0.500000);
\coordinate (4) at (1.00000,0.000000);
\coordinate (5) at (0.000000,0.000000);

\draw[edge] (1) -- (2);
\draw[edge] (1) -- (3);
\draw[edge] (1) -- (4);
\draw[edge] (2) -- (3);
\draw[edge] (2) -- (5);
\draw[edge] (3) -- (4);
\draw[edge] (4) -- (5);


\end{tikzpicture} \\
	\end{tabular}
	\caption{The \RMS-optimal dissections of the square
          with $3$ triangles
          ($\RMS=0.117851$).}\label{fig:opt_diss3}
\end{figure}

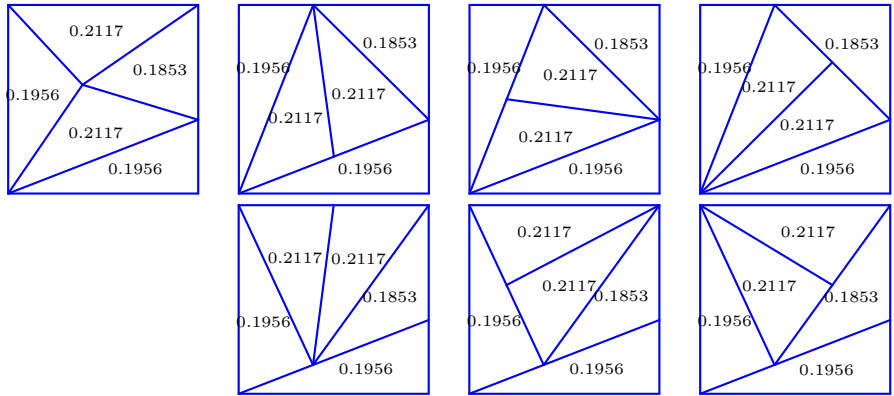
\begin{figure}[htp]
	\centering
	\begin{tabular}{c@{\hspace{0.25cm}}c@{\hspace{0.25cm}}c@{\hspace{0.25cm}}c}
	\begin{tikzpicture}[scale=2.5,
edge/.style={color=blue!95!black,thick,line join=bevel,line cap=round}]

\coordinate (1) at (0.000000,1.00000);
\coordinate (2) at (1.00000,1.00000);
\coordinate (3) at (0.391260,0.576542);
\coordinate (4) at (0.000000,0.000000);
\coordinate (5) at (1.00000,0.391260);
\coordinate (6) at (1.00000,0.000000);

\draw[edge] (1) -- (2);
\draw[edge] (1) -- (3);
\draw[edge] (1) -- (4);
\draw[edge] (2) -- (3);
\draw[edge] (2) -- (5);
\draw[edge] (3) -- (4);
\draw[edge] (3) -- (5);
\draw[edge] (4) -- (5);
\draw[edge] (4) -- (6);
\draw[edge] (5) -- (6);

\node at ($(1)!.5!(2)!0.33!(3)$) {\tiny 0.2117};
\node at ($(4)!.5!(1)!0.33!(3)$) {\tiny 0.1956};
\node at ($(4)!.5!(3)!0.33!(5)$) {\tiny 0.2117};
\node at ($(2)!.5!(5)!0.33!(3)$) {\tiny 0.1853};
\node at ($(5)!.5!(6)!0.33!(4)$) {\tiny 0.1956};

\end{tikzpicture} & \begin{tikzpicture}[scale=2.5,
edge/.style={color=blue!95!black,thick,line join=bevel,line cap=round}]

\coordinate (1) at (1.00000,0.39126);
\coordinate (2) at (0.500000,0.19563);
\coordinate (3) at (1.00000,0.00000);
\coordinate (4) at (1.000000,1.00000);
\coordinate (5) at (0.391260,1.000000);
\coordinate (6) at (0.00000,0.000000);
\coordinate (7) at (0.000000,1.000000);

\draw[edge] (1) -- (2);
\draw[edge] (1) -- (3);
\draw[edge] (1) -- (4);
\draw[edge] (1) -- (5);
\draw[edge] (2) -- (5);
\draw[edge] (2) -- (6);
\draw[edge] (3) -- (6);
\draw[edge] (4) -- (5);
\draw[edge] (5) -- (6);
\draw[edge] (5) -- (7);
\draw[edge] (6) -- (7);

\node at ($(3)!.5!(6)!0.33!(1)$) {\tiny 0.1956};
\node at ($(5)!.5!(1)!0.33!(2)$) {\tiny 0.2117};
\node at ($(5)!.5!(4)!0.33!(1)$) {\tiny 0.1853};
\node at ($(6)!.5!(7)!0.33!(5)$) {\tiny 0.1956};
\node at ($(5)!.5!(2)!0.33!(6)$) {\tiny 0.2117};

\end{tikzpicture} &  \begin{tikzpicture}[scale=2.5,
edge/.style={color=blue!95!black,thick,line join=bevel,line cap=round}]

\coordinate (1) at (0.39126,1.00000);
\coordinate (2) at (0.19563,0.500000);
\coordinate (3) at (0.00000,1.00000);
\coordinate (4) at (1.000000,1.00000);
\coordinate (5) at (1.000000,0.391260);
\coordinate (6) at (0.00000,0.000000);
\coordinate (7) at (1.000000,0.000000);

\draw[edge] (1) -- (2);
\draw[edge] (1) -- (3);
\draw[edge] (1) -- (4);
\draw[edge] (1) -- (5);
\draw[edge] (2) -- (5);
\draw[edge] (2) -- (6);
\draw[edge] (3) -- (6);
\draw[edge] (4) -- (5);
\draw[edge] (5) -- (6);
\draw[edge] (5) -- (7);
\draw[edge] (6) -- (7);

\node at ($(3)!.5!(6)!0.33!(1)$) {\tiny 0.1956};
\node at ($(5)!.5!(1)!0.33!(2)$) {\tiny 0.2117};
\node at ($(5)!.5!(4)!0.33!(1)$) {\tiny 0.1853};
\node at ($(6)!.5!(7)!0.33!(5)$) {\tiny 0.1956};
\node at ($(5)!.5!(2)!0.33!(6)$) {\tiny 0.2117};

\end{tikzpicture} & \begin{tikzpicture}[scale=2.5,
edge/.style={color=blue!95!black,thick,line join=bevel,line cap=round}]

\coordinate (1) at (0.391260,1.00000);
\coordinate (2) at (0.695630,0.695630);
\coordinate (3) at (1.00000,1.00000);
\coordinate (4) at (0.000000,1.00000);
\coordinate (5) at (0.000000,0.000000);
\coordinate (6) at (1.00000,0.391260);
\coordinate (7) at (1.00000,0.000000);

\draw[edge] (1) -- (2);
\draw[edge] (1) -- (3);
\draw[edge] (1) -- (4);
\draw[edge] (1) -- (5);
\draw[edge] (2) -- (5);
\draw[edge] (2) -- (6);
\draw[edge] (3) -- (6);
\draw[edge] (4) -- (5);
\draw[edge] (5) -- (6);
\draw[edge] (5) -- (7);
\draw[edge] (6) -- (7);

\node at ($(3)!.5!(6)!0.33!(1)$) {\tiny 0.1853};
\node at ($(5)!.5!(1)!0.33!(2)$) {\tiny 0.2117};
\node at ($(5)!.5!(4)!0.33!(1)$) {\tiny 0.1956};
\node at ($(6)!.5!(7)!0.33!(5)$) {\tiny 0.1956};
\node at ($(5)!.5!(2)!0.33!(6)$) {\tiny 0.2117};

\end{tikzpicture} \\
	 & \begin{tikzpicture}[scale=2.5,
edge/.style={color=blue!95!black,thick,line join=bevel,line cap=round}]

\coordinate (1) at (0.000000,1.000000);
\coordinate (2) at (0.500000,1.000000);
\coordinate (3) at (0.00000,0.000000);
\coordinate (4) at (0.391260,0.153084);
\coordinate (5) at (1.000000,1.00000);
\coordinate (6) at (1.00000,0.00000);
\coordinate (7) at (1.00000,0.39126);

\draw[edge] (1) -- (2);
\draw[edge] (1) -- (3);
\draw[edge] (1) -- (4);
\draw[edge] (2) -- (4);
\draw[edge] (2) -- (5);
\draw[edge] (3) -- (4);
\draw[edge] (3) -- (6);
\draw[edge] (4) -- (5);
\draw[edge] (4) -- (7);
\draw[edge] (5) -- (7);
\draw[edge] (6) -- (7);

\node at ($(4)!.5!(3)!0.33!(1)$) {\tiny 0.1956};
\node at ($(5)!.5!(4)!0.33!(2)$) {\tiny 0.2117};
\node at ($(7)!.5!(4)!0.33!(5)$) {\tiny 0.1853};
\node at ($(7)!.5!(6)!0.33!(3)$) {\tiny 0.1956};
\node at ($(1)!.5!(2)!0.33!(4)$) {\tiny 0.2117};

\end{tikzpicture} & \begin{tikzpicture}[scale=2.5,
edge/.style={color=blue!95!black,thick,line join=bevel,line cap=round}]

\coordinate (1) at (0.00000,0.000000);
\coordinate (2) at (0.00000,1.00000);
\coordinate (3) at (0.39126,0.153084);
\coordinate (4) at (1.000000,0.000000);
\coordinate (5) at (1.000000,0.391260);
\coordinate (6) at (1.000000,1.00000);
\coordinate (7) at (0.19563,0.576542);

\draw[edge] (1) -- (2);
\draw[edge] (1) -- (3);
\draw[edge] (1) -- (4);
\draw[edge] (2) -- (6);
\draw[edge] (2) -- (7);
\draw[edge] (3) -- (5);
\draw[edge] (3) -- (6);
\draw[edge] (3) -- (7);
\draw[edge] (4) -- (5);
\draw[edge] (5) -- (6);
\draw[edge] (6) -- (7);

\node at ($(7)!.5!(3)!0.33!(6)$) {\tiny 0.2117};
\node at ($(2)!.5!(7)!0.33!(6)$) {\tiny 0.2117};
\node at ($(5)!.5!(6)!0.33!(3)$) {\tiny 0.1853};
\node at ($(3)!.5!(2)!0.33!(1)$) {\tiny 0.1956};
\node at ($(4)!.5!(5)!0.33!(1)$) {\tiny 0.1956};

\end{tikzpicture} & \begin{tikzpicture}[scale=2.5,
edge/.style={color=blue!95!black,thick,line join=bevel,line cap=round}]

\coordinate (1) at (1.00000,0.391260);
\coordinate (2) at (1.00000,1.00000);
\coordinate (3) at (0.391260,0.153084);
\coordinate (4) at (1.00000,0.000000);
\coordinate (5) at (0.000000,0.000000);
\coordinate (6) at (0.000000,1.00000);
\coordinate (7) at (0.695630,0.576542);

\draw[edge] (1) -- (2);
\draw[edge] (1) -- (3);
\draw[edge] (1) -- (4);
\draw[edge] (2) -- (6);
\draw[edge] (2) -- (7);
\draw[edge] (3) -- (5);
\draw[edge] (3) -- (6);
\draw[edge] (3) -- (7);
\draw[edge] (4) -- (5);
\draw[edge] (5) -- (6);
\draw[edge] (6) -- (7);

\node at ($(7)!.5!(3)!0.33!(6)$) {\tiny 0.2117};
\node at ($(2)!.5!(7)!0.33!(6)$) {\tiny 0.2117};
\node at ($(5)!.5!(6)!0.33!(3)$) {\tiny 0.1956};
\node at ($(3)!.5!(2)!0.33!(1)$) {\tiny 0.1853};
\node at ($(4)!.5!(5)!0.33!(1)$) {\tiny 0.1956};

\end{tikzpicture} \\
\end{tabular}

\caption{The \RMS-optimal dissections of the square with $5$
  triangles ($\RMS=0.0102819$).  They all have the same
  multiset of areas.  We see that the optimum is achieved both by a
  triangulation (upper left) and by dissections which are not
  triangulations. The dissections in the same column are related by
  keeping the bottom triangle, reflecting the remaining trapezoid
  horizontally and shearing it.
The three dissections in the bottom row are obtained by halving a
triangle in three possible ways, and the same is true for the
corresponding dissections in the top row.
  (In addition, the second and third
  dissection in the top row are mirror images, and only one
of the two dissections was actually produced by the program.)  }\label{fig:opt_diss5}
\end{figure}
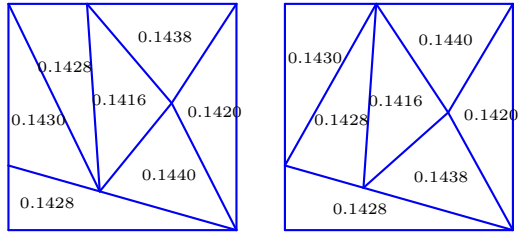
\begin{figure}[htp]
	\centering
	\begin{tabular}{c@{\hspace{0.25cm}}c}
	\begin{tikzpicture}[scale=3,
edge/.style={color=blue!95!black,thick,line join=bevel,line cap=round}]

\coordinate (1) at (0.344515,1.00000);
\coordinate (2) at (1.000000,1.00000);

\coordinate (3) at (0.715936,1-0.438628);
\coordinate (4) at (0.400474,1-0.828746);

\coordinate (5) at (0.000000,1.000000);
\coordinate (6) at (1.00000,0.00000);
\coordinate (7) at (0.00000,1-0.714351);
\coordinate (8) at (0.00000,0.000000);

\draw[edge] (1) -- (2);
\draw[edge] (1) -- (3);
\draw[edge] (1) -- (4);
\draw[edge] (1) -- (5);
\draw[edge] (2) -- (3);
\draw[edge] (2) -- (6);
\draw[edge] (3) -- (4);
\draw[edge] (3) -- (6);
\draw[edge] (4) -- (5);
\draw[edge] (4) -- (6);
\draw[edge] (4) -- (7);
\draw[edge] (5) -- (7);
\draw[edge] (6) -- (8);
\draw[edge] (7) -- (8);

\node at ($(4)!.5!(1)!0.33!(3)$) {\tiny 0.1416};
\node at ($(7)!.5!(8)!0.17!(6)$) {\tiny 0.1428};
\node at ($(4)!.5!(3)!0.33!(6)$) {\tiny 0.1440};
\node at ($(7)!.5!(5)!0.33!(4)$) {\tiny 0.1430};
\node at ($(1)!.5!(2)!0.33!(3)$) {\tiny 0.1438};
\node at ($(4)!.5!(5)!0.33!(1)$) {\tiny 0.1428};
\node at ($(3)!.5!(2)!0.33!(6)$) {\tiny 0.1420};

\end{tikzpicture} & \begin{tikzpicture}[scale=3,
edge/.style={color=blue!95!black,thick,line join=bevel,line cap=round}]

\coordinate (1) at (1.000000,0.000000);
\coordinate (2) at (1.00000,1.000000);
\coordinate (3) at (1-0.284064,0.519770);
\coordinate (4) at (1-0.655485,0.187239);
\coordinate (5) at (0.000000,0.00000);
\coordinate (6) at (0.00000,0.285649);
\coordinate (7) at (0.00000,1.00000);
\coordinate (8) at (1-0.599526,1.00000);

\draw[edge] (1) -- (2);
\draw[edge] (1) -- (3);
\draw[edge] (1) -- (4);
\draw[edge] (1) -- (5);
\draw[edge] (2) -- (3);
\draw[edge] (2) -- (8);
\draw[edge] (3) -- (4);
\draw[edge] (3) -- (8);
\draw[edge] (4) -- (6);
\draw[edge] (4) -- (8);
\draw[edge] (5) -- (6);
\draw[edge] (6) -- (7);
\draw[edge] (6) -- (8);
\draw[edge] (7) -- (8);

\node at ($(4)!.5!(3)!0.33!(1)$) {\tiny 0.1438};
\node at ($(8)!.5!(2)!0.33!(3)$) {\tiny 0.1440};
\node at ($(5)!.5!(6)!0.33!(1)$) {\tiny 0.1428};
\node at ($(4)!.5!(8)!0.33!(3)$) {\tiny 0.1416};
\node at ($(3)!.5!(2)!0.33!(1)$) {\tiny 0.1420};
\node at ($(6)!.5!(7)!0.33!(8)$) {\tiny 0.1430};
\node at ($(4)!.5!(6)!0.33!(8)$) {\tiny 0.1428};

\end{tikzpicture} \\
	\end{tabular}

	\caption{The \RMS-optimal dissections of the square with $7$
          triangles and at most $8$ vertices ($\RMS=0.0008051$). 
		  The two dissections are related by keeping the bottom triangle, 
		  reflecting the remaining trapezoid horizontally, and
                  then shearing it.
}\label{fig:opt_diss7}
\end{figure}

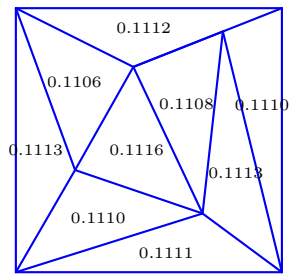
\begin{figure}[htp]
	\centering
	\begin{tikzpicture}[scale=3.5,
edge/.style={color=blue!95!black,thick,line join=bevel,line cap=round}]

\coordinate (1) at (0.000000,0.000000);
\coordinate (2) at (0.701785,0.222284);
\coordinate (3) at (1.00000,0.000000);
\coordinate (4) at (0.000000,1.00000);
\coordinate (5) at (0.222559,0.386865);
\coordinate (6) at (0.441653,0.777546);
\coordinate (7) at (0.778053,0.911573);
\coordinate (8) at (1.00000,1.00000);

\draw[edge] (1) -- (2);
\draw[edge] (1) -- (3);
\draw[edge] (1) -- (4);
\draw[edge] (1) -- (5);
\draw[edge] (2) -- (3);
\draw[edge] (2) -- (5);
\draw[edge] (2) -- (6);
\draw[edge] (2) -- (7);
\draw[edge] (3) -- (7);
\draw[edge] (3) -- (8);
\draw[edge] (4) -- (5);
\draw[edge] (4) -- (6);
\draw[edge] (4) -- (8);
\draw[edge] (5) -- (6);
\draw[edge] (6) -- (7);
\draw[edge] (6) -- (8);

\node at ($(3)!.5!(7)!0.33!(8)$) {\tiny 0.1110};
\node at ($(5)!.5!(4)!0.33!(6)$) {\tiny 0.1106};
\node at ($(1)!.5!(2)!0.33!(3)$) {\tiny 0.1111};
\node at ($(4)!.5!(5)!0.33!(1)$) {\tiny 0.1113};
\node at ($(4)!.5!(8)!0.33!(6)$) {\tiny 0.1112};
\node at ($(5)!.5!(2)!0.33!(1)$) {\tiny 0.1110};
\node at ($(5)!.5!(6)!0.33!(2)$) {\tiny 0.1116};
\node at ($(3)!.5!(2)!0.33!(7)$) {\tiny 0.1113};
\node at ($(7)!.5!(2)!0.33!(6)$) {\tiny 0.1108};

\end{tikzpicture}

	\caption{The \RMS-optimal dissection of the square
          with $9$ triangles and at most $8$ vertices
          ($\RMS=0.0002791$).}\label{fig:opt_diss9}
\end{figure}

We compare these results to some results from the diploma
thesis of
 Mansow~\cite{mansow_ungerade_2003}.
 Mansow generated all combinatorial types of 
triangulations of the square with up to 11 triangles,
using the program \plantri~\cite{plantri}. 
 For each type, she
set up the ``minimax'' problem 
for the difference between the largest and 
smallest triangle area when the nodes are restricted to the 
square. She used \emph{Matlab}'s \emph{Optimization Tool}
to search for the optimum from some starting value. 

For comparison with Mansow's results, our table reports also the 
smallest \emph{ranges} that we found
during our computations. Note that these are the ranges of the
RMS-optimal dissections (for each combinatorial type) and not
the range-optimal dissections, and obviously,
 different objective functions can lead to different results.
For example, Mansow found a triangulation with $6$ nodes with
a smaller range of $0.0225425$, 
compared to the range $\range=0.0264466$ that we found. 

Starting with $7$ triangles and up to $8$ nodes, 
dissections achieve smaller area deviation than triangulations,
both in terms of \RMS~error ($0.002 320 7$ versus
$0.004 008 1$) and
in terms of the range:
The best range of a dissection that we found
($0.002 320 7$, which is not even optimized) beats the best
triangulation (with $\range=0.0031$), which was found by Mansow.
(The comparisons regarding the \RMS~error are not conclusive, since
triangulations and dissections with 9 nodes are not included.)

\section{Upper bounds for the area range of dissections of the square}\label{sec:upper_bounds}

We extended our search for good dissections to larger numbers of
triangles, without trying to be exhaustive. The dissections that we
found suggested a pattern, which we describe and analyze in
Section~\ref{ssec:n9}. A more careful analysis leads to a family of dissections
with a superpolynomial decrease of the area range presented in Section~\ref{superpolynomial}.
In Section~\ref{ssec:experiments}, we compare, using experimental data, 
the area range of this family to a class of similar dissections.
In Section~\ref{ssec:triangulations}, we provide a class of triangulations 
that we suspect to have an exponential decrease of the area range.
In Section~\ref{ssec:heuristic} we provide a heuristic argument 
in favor of an exponential decrease of the area range.
Finally, in Section~\ref{sec:tarry}, we discuss the relation between 
minimizing the range of areas and the Tarry--Escott Problem.

\subsection{Monotonicity of the area deviation}
Before we look at special constructions, we mention an
 observation due to
Thomas \cite[Thm.~1]{thomas_dissection_1968},
which shows how we can easily go from a dissection
into $n$ triangles to $n+2$ triangles.
As $n$ increases, we can trivially achieve at
least an inverse linear improvement in the area deviation:
  
\begin{lemma}\label{add2}
Let $D$ be a dissection of the unit square into $n$ triangles.
Then there exists a dissection $D'$ of the unit square into $n+2$
triangles
with
$\range(D')=\frac{n}{n+2}\range(D)$ and
$\RMS(D')=\bigl(\frac{n}{n+2}\bigr)^{3/2}\RMS(D)$.
\end{lemma}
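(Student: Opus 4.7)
The plan is to build $D'$ from $D$ by an affine compression of the square plus the addition of two equal triangles on a leftover strip. First I would apply the affine map $(x,y)\mapsto(x,\tfrac{n}{n+2}y)$ to the unit square. This sends $D$ to a dissection of the rectangle $[0,1]\times[0,\tfrac{n}{n+2}]$ into $n$ triangles of scaled areas $\tfrac{n}{n+2}a_i$, and leaves the horizontal strip $[0,1]\times[\tfrac{n}{n+2},1]$, of area $\tfrac{2}{n+2}$, untouched. Cutting the strip along one diagonal yields two triangles each of area $\tfrac{1}{n+2}$. Together, these $n+2$ triangles form a valid dissection $D'$ of the unit square.

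The key observation is that the new average area $\tfrac{1}{n+2}$ equals $\tfrac{n}{n+2}\cdot\tfrac{1}{n}$, i.e.\ the scaling factor times the old mean. Since $\tfrac{1}{n}$ lies between $\min_i a_i$ and $\max_i a_i$, the area $\tfrac{1}{n+2}$ of each of the two new triangles lies between the minimum and maximum of the scaled areas $\tfrac{n}{n+2}a_i$. Therefore the range of $D'$ is realized by the extreme scaled triangles, giving
\[
\range(D') \;=\; \tfrac{n}{n+2}(\max_i a_i - \min_i a_i) \;=\; \tfrac{n}{n+2}\range(D).
\]

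For the RMS bound, write $\mu=1/n$ and $\mu'=1/(n+2)=\tfrac{n}{n+2}\mu$. Each scaled triangle contributes $\bigl(\tfrac{n}{n+2}a_i-\mu'\bigr)^2=\bigl(\tfrac{n}{n+2}\bigr)^2(a_i-\mu)^2$ to the sum of squared residuals, while each of the two added triangles has area exactly $\mu'$ and hence contributes $0$. Summing over all $n+2$ triangles and dividing by $n+2$ gives
\[
\RMS(D')^2 \;=\; \tfrac{1}{n+2}\bigl(\tfrac{n}{n+2}\bigr)^2\sum_{i=1}^n(a_i-\mu)^2
\;=\; \tfrac{n}{n+2}\bigl(\tfrac{n}{n+2}\bigr)^2\RMS(D)^2,
\]
so $\RMS(D')=\bigl(\tfrac{n}{n+2}\bigr)^{3/2}\RMS(D)$, as required.

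There is no real obstacle: the only things to check are that the affine compression preserves the property of being a dissection (it does, being a bijection onto a rectangle) and that glueing two triangulations along a common edge of two polygons with disjoint interiors yields a dissection of the union. The slight subtlety is making sure that $\tfrac{1}{n+2}$ lies inside $[\tfrac{n}{n+2}\min_i a_i,\,\tfrac{n}{n+2}\max_i a_i]$ so that the range is indeed $\tfrac{n}{n+2}\range(D)$ and not larger, which follows at once from $\min_i a_i\le 1/n\le\max_i a_i$.
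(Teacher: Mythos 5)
Your proposal is correct and is essentially the paper's own argument: both append two triangles whose area equals the mean and rescale the resulting rectangle back to the unit square, the only difference being that you compress first and then fill the leftover strip, while the paper adds the two triangles first and then scales. The verification that the two new triangles do not enlarge the range (because $1/(n+2)$ lies between the scaled minimum and maximum) and the $\bigl(\tfrac{n}{n+2}\bigr)^{3/2}$ factor for the RMS are exactly as in the paper.
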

\begin{proof}
We can add two triangles of area $\frac{1}{n}$ on one side of the square to get a rectangle of area $\frac{n+2}{n}$, as in Figure~\ref{fig:add2}.
Scaling the rectangle to a square to get the dissection $D'$, the areas get multiplied by $\frac{n}{n+2}$.
Hence the range of areas in $D'$ gets multiplied by $\frac{n}{n+2}$.
The~\RMS\ formula is affected in a similar way: The two new triangles
add $0$ to the sum of squared differences, and the rescaling 
multiplies the \RMS\ error by $\bigl(\frac{n}{n+2}\bigr)^{3/2}
\RMS(D)$.
\end{proof}
\begin{figure}[!ht]
  \centering
  \includegraphics{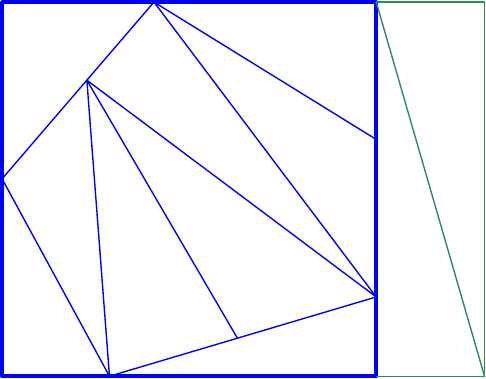}
  \caption{Adding two triangles with the desired area}
  \label{fig:add2}
\end{figure}

\subsection{A family of dissections with area range $O({1}/{n^5})$}\label{ssec:n9}

As a warm-up for the next section, we present a result of independent interest giving a polynomial upper bound on the area range.

\begin{theorem}\label{thm:n9}
Let $n\equiv 1 \pmod{4}$, and let $D_n$ be the dissection of the unit square 
into $n$ triangles
shown in {\rm Figure~\ref{fig:family_9}}, consisting of a right triangle on
top with area $\frac{1}{n}$ and $\frac{n-1}{4}$ trapezoidal slices
divided into $4$ triangles each.
The nodes of $D_n$ can be placed such that the range of areas satisfies $\range(D_n)\le O({1}/{n^5})$.
\end{theorem}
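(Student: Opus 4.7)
The plan is to parametrize the family $D_n$ explicitly by the node coordinates and then tune those coordinates so that every triangle has area agreeing with $1/n$ to order $1/n^5$. First I would set up coordinates for the dissection shown in Figure~\ref{fig:family_9}: the top right triangle is pinned down by its prescribed area $1/n$, and each of the $m := (n-1)/4$ trapezoidal slices is described by its height $h_k$, the positions of the nodes appearing on its slanted boundary (which are shared with the adjacent slice), and the coordinates of whatever interior nodes it uses to carry out the 4-triangle subdivision. In total this gives $O(n)$ free real parameters.

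The main idea is that each trapezoidal slice offers several parameters with which to enforce area equalities inside it. Writing the four triangle areas of slice~$k$ as polynomials in its local parameters, and using that they sum to the area of the slice, I would choose the interior node of each slice so that three of the four triangles have area exactly $1/n$, leaving only one ``residual'' triangle per slice whose area is determined by the remaining global constraints (the slice areas must sum to $(n-1)/n$, the nodes on the slanted side must lie on the appropriate line, etc.). Because the construction has $m+1$ residual triangles and $m$ height parameters $h_k$ still to be determined, I would choose the $h_k$ so as to kill the leading terms of the Taylor expansion of these residuals in the small parameter $\varepsilon := 1/n$.

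The key quantitative step is the Taylor analysis. Within slice~$k$ the natural length scale is $\varepsilon$ (each slice has height $\sim \varepsilon$), and a naive accounting gives residual areas of size $\varepsilon^3$, reproducing Schulze's $O(1/n^3)$ bound. To reach $O(1/n^5)$ I would expand each residual as a power series in $\varepsilon$ and show that, after the local equalities have been enforced, the residuals depend on the $h_k$ through a linear system whose leading coefficients can be matched exactly by an appropriate choice of the $h_k$. Two orders of cancellation in each slice, combined with summation over $m = O(n)$ slices, would then give the claimed $O(1/n^5)$ bound on the range.

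The main obstacle is precisely this cancellation argument: I need to verify that the Taylor coefficients of the residuals really are ``generic enough'' that a choice of $h_1,\dots,h_m$ satisfying $\sum h_k = 1 - 2/n$ can simultaneously annul them up to order $\varepsilon^4$. Monsky's theorem (applied in the form of Theorem~\ref{thm:frame_monsky}) guarantees that the residuals cannot be made to vanish identically, so the analysis has to stop at some finite order; showing that this order is at least~$5$ is where the work lies. After this, verifying feasibility (positivity of the $h_k$, interior nodes lying strictly inside their slices) is routine for $n$ sufficiently large.
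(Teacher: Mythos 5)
Your proposal does not actually contain a proof of the $O(1/n^5)$ bound; it defers the decisive step. You reduce the problem to choosing slice parameters so that a family of residual areas cancels two extra orders in a Taylor expansion in $1/n$, and then you explicitly flag the solvability of that cancellation (the ``genericity'' of the Taylor coefficients, and a linear system with $m$ unknowns against $m+1$ residuals, a mismatch you do not address) as ``where the work lies.'' That is precisely the content of the theorem, so as written the argument is a plan, not a proof. The appeal to Monsky's theorem is also misplaced here: it tells you the residuals cannot all vanish, but it gives no information about the order at which an expansion must stop and plays no role in establishing an upper bound.

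More importantly, the strategy is miscalibrated, because no inter-slice tuning is needed at all: the gain over $O(1/n^3)$ in this construction is a purely local, within-slice effect. The paper's proof fixes every slice to have area exactly $\frac4n$ and places the node on the bottom edge so that $T_1$ has area exactly $\frac1n$; the point is that the relevant small parameter inside a slice is not $\frac1n$ but $\eps=\frac{2a}{nh}=O(1/n^2)$, the relative difference between the two vertical sides of the slice (width $a=O(1/n)$ times the slope $2/n$ of the top edge). A short computation,
$A_4=\tfrac{hb}{2}\,(1-\eps)\,(1-\eps/2)^{-1}(1+\eps/2)=\tfrac1n\bigl(1+O(\eps^2)\bigr)$,
shows that the linear terms in $\eps$ cancel identically, so $A_4=\tfrac1n+O(1/n^5)$ with no heights to tune, and $T_2,T_3$ then split the remaining area of the slice equally. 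Your claim that ``a naive accounting gives residual areas of size $\varepsilon^3$'' (with $\varepsilon=1/n$) therefore starts from the wrong baseline, and the global cancellation step you would need to justify is both unverified and unnecessary. (As a side remark, the heights of the slices in Figure~\ref{fig:family_9} are all close to $1$ and do not satisfy $\sum h_k=1-2/n$; the quantities that sum to $1$ are the widths.)
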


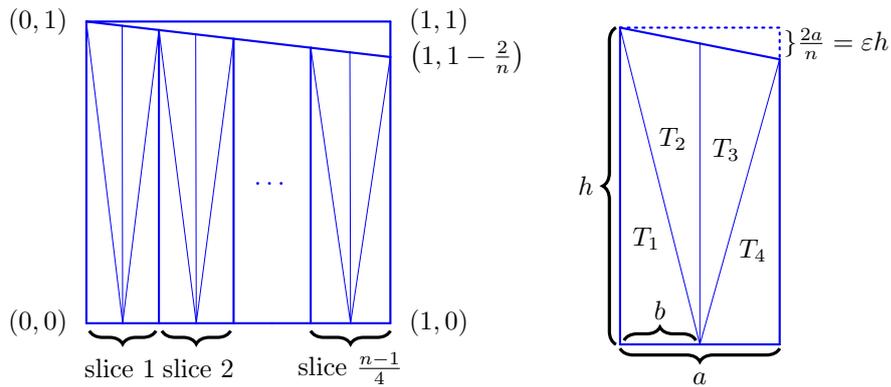
\begin{figure}[!ht]
\begin{tabular}{cc}
\begin{tikzpicture}[scale=4,
edge_slab/.style={color=blue!95!black,very thin,line join=bevel,line cap=round},
edge/.style={color=blue!95!black,thick,line join=bevel,line cap=round},
vertex/.style={circle,inner sep=1pt,fill=blue}]

\def\stepone{1}

\coordinate (1) at (1.00000,1.00000);
\coordinate (2) at (0.000000,1.00000);
\coordinate (3) at (0.000000,0.000000);
\coordinate (4) at (0.117621,0.986162);
\coordinate (5) at (0.119322,0.000000);
\coordinate (6) at (0.238640,0.971925);
\coordinate (7) at (0.238644,0.000000);
\coordinate (8) at (0.359659,0.957687);
\coordinate (9) at (0.361519,0.000000);
\coordinate (10) at (0.484386,0.943013);
\coordinate (11) at (0.484393,0.000000);
\coordinate (12) at (0.609114,0.928340);
\coordinate (13) at (0.611156,0.000000);
\coordinate (14) at (0.737911,0.913187);
\coordinate (15) at (0.737918,0.000000);
\coordinate (16) at (0.866708,0.898034);
\coordinate (17) at (0.868959,0.000000);
\coordinate (18) at (1.00000,0.882353);
\coordinate (19) at (1.00000,0.000000);

\draw[edge] (1) -- (2);
\draw[edge] (1) -- (18);
\draw[edge] (2) -- (3);
\draw[edge] (2) -- (4);
\draw[edge_slab] (2) -- (5);
\draw[edge] (3) -- (5);
\draw[edge_slab] (4) -- (5);
\draw[edge] (4) -- (6);
\draw[edge_slab] (5) -- (6);
\draw[edge] (5) -- (7);
\draw[edge] (6) -- (7);
\draw[edge] (6) -- (8);
\draw[edge_slab] (6) -- (9);
\draw[edge] (7) -- (9);
\draw[edge_slab] (8) -- (9);
\draw[edge] (8) -- (10);
\draw[edge_slab] (9) -- (10);
\draw[edge] (9) -- (11);
\draw[edge] (10) -- (11);
\draw[edge] (10) -- (12);
\draw[edge] (11) -- (13);
\draw[edge] (12) -- (14);
\draw[edge] (13) -- (15);
\draw[edge] (14) -- (15);
\draw[edge] (14) -- (16);
\draw[edge_slab] (14) -- (17);
\draw[edge] (15) -- (17);
\draw[edge_slab] (16) -- (17);
\draw[edge] (16) -- (18);
\draw[edge_slab] (17) -- (18);
\draw[edge] (17) -- (19);
\draw[edge] (18) -- (19);

\node[edge] at ($(12)!.5!(13)$) {$\dots$}; 

\draw[very thick,decorate,decoration={brace,amplitude=5pt},xshift=0cm,yshift=-0.025cm] (0.228644,0) -- (0.01,0) node [black,midway,yshift=-0.5cm] {slice 1\,};
\draw[very thick,decorate,decoration={brace,amplitude=5pt},xshift=0cm,yshift=-0.025cm] (0.474393,0.000000) -- (0.248644,0) node [black,midway,yshift=-0.5cm] {\,slice 2};
\draw[very thick,decorate,decoration={brace,amplitude=5pt},xshift=0cm,yshift=-0.025cm] (1,0.000000) -- (0.737918,0.000000) node [black,midway,yshift=-0.5cm] {slice $\frac{n-1}{4}$};

\node[label=right:{$\left(1,1-\frac{2}{n}\right)$}] at (18) {};

\node[label=left:{$\left(0,0\right)$}] at (3) {};
\node[label=left:{$\left(0,1\right)$}] at (2) {};
\node[label=right:{$\left(1,1\right)$}] at (1) {};
\node[label=right:{$\left(1,0\right)$}] at (19) {};

%
%

\end{tikzpicture} & \begin{tikzpicture}[scale=2.1,
edge_slab/.style={color=blue!95!black,very thin,line join=bevel,line cap=round},
edge/.style={color=blue!95!black,thick,line join=bevel,line cap=round},
vertex/.style={circle,inner sep=1pt,fill=blue}]

\def\stepone{1}

\coordinate (2) at (0.000000,2.00000);
\coordinate (3) at (0.000000,0.000000);
\coordinate (4) at (0.5,1.9);
\coordinate (5) at (0.5,0.000000);
\coordinate (6) at (1,1.8);
\coordinate (7) at (1,0.000000);

\draw[edge] (2) -- (3);
\draw[edge] (2) -- (4);
\draw[edge_slab] (2) -- (5);
\draw[edge] (3) -- (5);
\draw[edge_slab] (4) -- (5);
\draw[edge] (4) -- (6);
\draw[edge_slab] (5) -- (6);
\draw[edge] (5) -- (7);
\draw[edge] (6) -- (7);

\draw[edge,dotted] (0,2) -- (1,2) -- (1,1.8);


\draw[very thin,white,
xshift=0cm,xshift=0.025cm] (1,2) -- (1,1.8) node
[black,midway,xshift=0cm] {\rlap{$\}\frac{2a}{n}=\eps h$}};
\draw[very thick,decorate,decoration={brace,amplitude=5pt},xshift=0cm,yshift=-0.025cm] (1,0) -- (0,0) node [black,midway,yshift=-0.4cm] {$a$};
\draw[very thick,decorate,decoration={brace,amplitude=5pt},xshift=0cm,yshift=0.025cm] (0.025,0) -- (0.475,0) node [black,midway,yshift=0.4cm] {$b$};
\draw[very thick,decorate,decoration={brace,amplitude=5pt},xshift=0cm,xshift=-0.025cm] (0,0) -- (0,2) node [black,midway,xshift=-0.4cm] {$h$};

\node at ($(3)!.5!(2)!0.33!(5)$) {$T_1$};
\node at ($(2)!.5!(5)!0.33!(4)$) {$T_2$};
\node at ($(5)!.5!(4)!0.33!(6)$) {$T_3$};
\node at ($(5)!.5!(7)!0.33!(6)$) {$T_4$};

\end{tikzpicture}
\end{tabular}
\caption{On the left, a dissection of the unit square with $\frac{n-1}{4}$ rectangle trapezoidal slices and a top triangle of area $\frac{1}{n}$. On the right, the dimensions describing a slice.}\label{fig:family_9}
\end{figure}

\begin{proof} 
 We restrict the area of each slice to be exactly $\frac{4}{n}$.
This determines for each slice
the height~$h$ of the longer vertical side
 and the length $a$ of the horizontal base,
see Figure~\ref{fig:family_9}.
Both the longer vertical height $h$
and the shorter vertical height $h'$ lie between $1$ and $1-2/n$.
From the area formula $a(h+h')/2 = 4/n$ we derive
$a=4/n\cdot(1+O(\frac1n))
=O(\frac1n)$.

In each slice, we position the node on the horizontal base
such a way that the triangle $T_1$ has area $1/n$.
(This is not the best choice, but it simplifies the computations.
The optimal choice would improve the error only by a factor of
about~2.)
This determines the area of $T_4$. The triangles $T_2$ and $T_3$
can then share the remaining area equally by adjusting the edge
between them.
The base $b$ of the triangle $T_1$ is computed as follows.
Since the upper edge of the slice has slope $-2/n$, 
the shorter vertical side of the slice has height $h'=h(1-\eps)$, 
with $\eps:=\frac{2a}{nh}=O(1/n^2)$.
Thus the area of the slice is $ah(1-\eps/2)=4/n$.
Comparing this
with the area of the triangle,
$bh/2=1/n$, we deduce that
$b=\frac{a}{2}(1-\frac\eps2)$.
The areas~$A_i$ of the triangles~$T_i$ are now
\begin{align*}
 A_1 &= \tfrac{hb}{2}=\tfrac{1}{n}, \\
 A_4 &
= \tfrac{1}{2}
h(1-\eps)
(a-b)
\\
&= \tfrac{1}{2}
h(1-\eps)\tfrac a2 (1+\eps/2)
\\
&= \tfrac{1}{2}
h(1-\eps)\frac b{1-\eps/2} (1+\eps/2)
\\
&= \tfrac{hb}{2}
(1-\eps)({1-\eps/2})^{-1} (1+\eps/2)
\\
&= \tfrac{1}{n}
(1-\eps)\left(1+\eps/2+O(\eps^2)\right)(1+\eps/2)
\\
&= \tfrac{1}{n}
(1+O(\eps^2))
= \tfrac{1}{n}
(1+O(1/n^4))
=\tfrac{1}{n}+O(\tfrac{1}{n^5}), \\
 A_2 = A_3 &= \tfrac12 \left(\tfrac{4}{n} - A_1 - A_4 \right) =  \tfrac{1}{n} +
 O(\tfrac{1}{n^5}).
\qedhere
\end{align*}
\end{proof}

\begin{remark}
The sum of squared residuals  $\deltassr$ for this family of
dissections
is $n\times O( 1/{n^5})^2= O(1/n^9)$.
We optimized $\deltassr$ via the function \texttt{minimize} of the \texttt{python} library \texttt{scipy} with a tolerance of $1\times 10^{-19}$ 
and the method ``L-BFGS-B'' for $n$ up to $57$. 
The optimal values were approximately equal to $(0.53n-0.25)^{-9}$ using a least-square approximation, 
which suggests that the above construction is very close to the optimal representative of the combinatorial type.
\end{remark}

\subsection{A family of dissections with superpolynomially small area range}
\label{superpolynomial}
The previous construction can be improved using the Thue--Morse sequence.

\begin{definition}\label{def:thuemorse_basic}
The \Dfn{Thue--Morse sequence}
\[
\newcommand\nospaces[1]{\ifx #1\cdots \let\next=#1 \else
  {#1}\let\next=\nospaces\fi \next}
\newcommand\singledot{\raise1ex\hbox to 1pt
{\hss.\hss}}
s_1s_2s_3\ldots =
\nospaces+\singledot-\singledot-+\singledot-++-\singledot
-++-+--+\singledot-++-+--++--+-++-\singledot\cdots
\]
is defined recursively by $s_1=+1$ and
  \begin{align}
    s_{2j-1} &:= + s_{j}, \label{odd-case} \\
    s_{2j}   &:= - s_{j}, \label{even-case}
  \end{align}
for all $j\geq1$. 
\end{definition}
Classically, the Thue--Morse sequence is defined as a sequence of 0's
and 1's \cite[Sect.~2.2]{lothaire_combinatorics_1997}, but for
our purposes the values $\pm1$ (recorded as $+$ and $-$) are more
convenient.
We have inserted punctuation at the powers of 2 to highlight the
recursive structure.
A direct characterization of $s_i$ can be obtained from the binary
representation of $i-1$: $s_i=+1$ if and only if $i-1$ has an even number of 1's
in its binary
representation. 

The Thue--Morse sequence annihilates powers in the following sense:

\begin{lemma}
\label{lem:annihilate}
Let $k\ge 0$, $b\ne0$, and
let $f(x)$ be a polynomial of degree~$d$. 
If $d\ge k$, then there is a polynomial $F(x)$ of degree $d-k$ such that the following
identity holds for all~$x_0$\textup:
\begin{equation*}
 \sum _{i=1}^{2^k} s_i f(x_0+ib) = F(x_0).
\end{equation*}
Otherwise, if $d<k$, the above sum is zero.
\end{lemma}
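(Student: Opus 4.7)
The plan is to induct on $k$, exploiting the self-similarity of the Thue--Morse sequence.

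For the \emph{base case} $k = 0$, the sum is just $s_1 f(x_0 + b) = f(x_0 + b)$, a polynomial in $x_0$ of degree $d$, and the condition $d \geq k = 0$ is automatic, so we may take $F(x_0) = f(x_0 + b)$.

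For the \emph{inductive step}, I would assume the statement holds for $k - 1$ with any polynomial and any nonzero step size, then partition the $2^k$ indices into consecutive pairs $(2j - 1, 2j)$ for $j = 1, \dots, 2^{k-1}$ and use the defining identities $s_{2j-1} = +s_j$ and $s_{2j} = -s_j$ from \eqref{odd-case}--\eqref{even-case} to rewrite
\begin{equation*}
\sum_{i=1}^{2^k} s_i f(x_0 + ib) = \sum_{j=1}^{2^{k-1}} s_j \bigl( f(x_0 + (2j-1)b) - f(x_0 + 2jb) \bigr).
\end{equation*}
Then I would set $g(x) := f(x) - f(x + b)$, $b' := 2b$, and $x_0' := x_0 - b$. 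A direct substitution gives $g(x_0' + j b') = f(x_0 + (2j-1)b) - f(x_0 + 2jb)$, so the right-hand side becomes a Thue--Morse weighted sum of the same form as the original, but with $f$ replaced by $g$, step $b$ replaced by $b'$, starting shift $x_0$ replaced by $x_0'$, and length $2^k$ replaced by $2^{k-1}$.

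The key observation is that $g$ has degree exactly $d - 1$ when $d \geq 1$ (the leading term of $f$ cancels against that of $f({\cdot}+b)$, using $b \neq 0$), and $g \equiv 0$ when $d = 0$. Applying the induction hypothesis to $g$, $b'$, and $k-1$ handles both cases of the lemma in one stroke: if $d \geq k$, then $\deg g = d - 1 \geq k - 1$, so the inner sum equals a polynomial $F'(x_0')$ of degree $(d - 1) - (k - 1) = d - k$, and I obtain the desired $F(x_0) := F'(x_0 - b)$ of degree $d - k$; if $d < k$, then $\deg g < k - 1$ (or $g$ is identically zero), so the sum vanishes.

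The only subtle point I anticipate is bookkeeping: verifying that the substitution $(x_0, b) \mapsto (x_0', b')$ faithfully reproduces the Thue--Morse sign pattern $s_j$ on the shortened sum (so that the induction hypothesis applies verbatim), and confirming that $g$ really loses one degree (which is where the hypothesis $b \neq 0$ is used). Neither presents a real obstacle, and the argument closes cleanly.
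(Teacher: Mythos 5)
Your proof is correct and follows essentially the same route as the paper: induction on $k$, pairing consecutive terms via $s_{2j-1}=+s_j$, $s_{2j}=-s_j$, and passing to the difference polynomial of degree $d-1$ with doubled step size (the paper absorbs the shift into the new polynomial $\hat f(x):=f(x-b)-f(x)$ rather than into $x_0$, which is only a cosmetic difference). Your base case $F(x_0)=f(x_0+b)$ is in fact slightly more careful than the paper's ``$F(x)=f(x)$''.
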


The last claim was stated already by Prouhet in
1851~\cite{prouhet}; see also Section~\ref{sec:tarry}.
 For completeness, we give the easy proof by induction on $k$:
\begin{proof} 
For $k=0$, we can take $F(x)=f(x)$.
If $k>0$, we group the sum into pairs and use
\thetag{\ref{odd-case}--\ref{even-case}} from
 Definition~\ref{def:thuemorse_basic}:
\begin{align*}
 \sum _{i=1}^{2^k} s_i f(x_0+ib) 
& = 
 \sum _{j=1}^{2^k/2} \bigl[s_{2j-1} f(x_0+(2j-1)b) 
+ s_{2j} f(x_0+2jb) \bigr]
\\  
& = 
 \sum _{j=1}^{2^k/2} s_{j} \bigl[f(x_0+(2j-1)b) - f(x_0+2jb) \bigr] 
= 
 \sum _{j=1}^{2^{k-1}} s_{j} \hat f(x_0+j\hat b), 
\end{align*}
with $\hat b := 2b$ and $\hat f(x):=f(x-b)-f(x)$.
The polynomial $\hat f$ is identically zero if $d=0$. 
If $d>0$, $\hat f$~has degree $d-1$.
The parameter $k$ is also reduced by 1, and the induction goes through.
\end{proof}

We can now describe the main construction of our paper:

\begin{theorem}\label{thm:superpoly}
Let $n$ be odd, and let $D_n$ be the dissection of the square
with corners $(0,0)$, $(1,0)$, $(1,1)$ and $(0,1)$ with $n$ triangles
shown in {\rm Figure~\ref{fig:family_superpoly}}. 
The nodes of $D_n$ can be placed such that the range of areas 
is bounded from above by 
\begin{gather*}
  \range(D_n)\le 
\frac{8n^4}{n^{\log_2 n }\log_2n }
(1+O(\tfrac {\log n}{n}))
= \frac{1}{n^{\log_2 n -O(1)} }. 
\end{gather*} 
\end{theorem}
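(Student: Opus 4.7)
The plan is to analyze the construction $D_n$ displayed in Figure~\ref{fig:family_superpoly}, which we expect to be a refinement of the slab construction from Theorem~\ref{thm:n9}: a top triangle of area $1/n$ together with $n-1$ triangles arranged in thin near-vertical slabs, where the slab widths and slopes are perturbed according to the Thue--Morse sign pattern. The key idea is that by matching $2^k\approx n$, i.e.\ $k=\lfloor\log_2 n\rfloor$, Lemma~\ref{lem:annihilate} will let us cancel the first $k-1$ orders of the Taylor expansion of the area function, leaving a residual of order $b^k$ where $b\sim 1/n$ is the slab spacing. This gives a leading error of order $n^{-\log_2 n}$, and the stated explicit bound $8n^4/(n^{\log_2 n}\log_2 n)$ will emerge after careful bookkeeping of prefactors.

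First, I would set up a parameterization of the slab areas. Index the slabs by $i\in\{1,\dots,2^k\}$ and place their left boundaries at $x_i=x_0+ib$. As in Theorem~\ref{thm:n9}, the signed area of the $i$-th slab (or of each of the two triangles forming it) can be written as $1/n+\phi(x_i,\theta_i)$, where $\theta_i$ is a small width-perturbation parameter and $\phi$ is analytic in its arguments. Expanding in $\theta_i$ and $x_i$, one obtains $\phi(x,\theta)=\theta\cdot f(x)+O(\theta^2)$ with $f(x)$ a polynomial in $x$ whose coefficients are controlled by the slopes of the slab tops (of order $1/n$) and whose degree is bounded by the combinatorial complexity of the slab interior. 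The decisive step is to take $\theta_i:=s_i\alpha$ for a single amplitude $\alpha$: this forces the sum of the leading-order deviations over all slabs to be a weighted Thue--Morse sum $\alpha\sum_i s_i f(x_0+ib)$, which must vanish (so that the total area remains $1$) and which, by Lemma~\ref{lem:annihilate}, equals a polynomial $F$ of degree $\deg f-k$, so that once $\deg f<k$ the constraint becomes trivial.

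Next, I would apply Lemma~\ref{lem:annihilate} in its sharper form to estimate the residual error per triangle. After the Thue--Morse choice $\theta_i=s_i\alpha$, the triangle area deviations reduce to a $k$-fold finite difference applied to $\phi$, which by the usual mean-value estimate is bounded by $b^k\,\sup|\partial_x^k\phi|/k!$. Substituting $b\sim 1/n$, $k=\log_2 n$, and the bounds on the derivatives of $\phi$ (inherited from the $O(1/n)$ slopes), one obtains the residual area deviation per triangle of order $n^{-\log_2 n-O(1)}/(\log_2 n)!$. The polynomial prefactor $n^4$ and the $1/\log_2 n$ factor arise from the derivative norms and from a Stirling-type simplification of $1/k!$, giving the claimed bound on $\range(D_n)$.

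The main obstacle is the explicit bookkeeping of the construction: verifying that the slab geometry in Figure~\ref{fig:family_superpoly} genuinely produces a function $\phi$ whose Taylor series has the polynomial-in-$x$ form required by Lemma~\ref{lem:annihilate}, with coefficients of the sizes claimed above, and that the Thue--Morse-modulated slab widths still glue into a valid dissection (the upper endpoints of consecutive slab boundaries must stay monotone and inside the square). A secondary issue is that $n$ need not equal $2^k+1$; this is handled by padding with the construction of Lemma~\ref{add2}, which increases the range only by a $(1+O(\log n/n))$ factor absorbed into the stated bound, so that the superpolynomial decay $1/n^{\log_2 n-O(1)}$ holds for all odd $n$.
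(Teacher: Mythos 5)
There is a genuine gap: you have not identified the constraint that actually forces the areas to deviate from $1/n$. In your setup the only condition imposed on the perturbations $\theta_i$ is that ``the total area remains $1$,'' but that condition is vacuous --- the triangles tile the trapezoid no matter where the internal division points sit, so the total area is automatically correct and nothing in your argument prevents taking all areas exactly equal to $1/n$ (which Monsky forbids). The binding constraint in the paper's construction is geometric, not additive: after cutting the flat top triangle, one removes triangles $a_1,\dots,a_{n-1}$ from the trapezoid $PQRS$ left to right, each sitting on the bottom or the top edge, and the requirement is that the \emph{last} triangle ends flush with the right edge $QR$. Writing $O$ for the apex of the lines through $PQ$ and $RS$ (with $\overline{PO}=n/2$), each triangle foreshortens the remaining wedge by the ratio $(n/4-A_i)/(n/4-A_{i-1})$ on its top or bottom side according to its orientation $\tau_i$, and closure is the \emph{multiplicative} condition \eqref{eq:balance}, $\prod_i\bigl((n/4-A_i)/(n/4-A_{i-1})\bigr)^{\tau_i}=1$. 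Moreover, the Thue--Morse signs are used to choose the \emph{orientations} $\tau_i=s_i$ (which triangles point up versus down --- this is visible in the $+/-$ labels of Figure~\ref{fig:family_superpoly}), not to modulate width perturbations as in your proposal. Lemma~\ref{lem:annihilate} is then applied to $\ln\Phi^0=\sum_i s_i\bigl[\ln(1-iu)-\ln(1-(i-1)u)\bigr]$ with $u=4/n^2$: the degree-$\le k$ Taylor polynomial of $\ln(1-x)$ cancels exactly, and the sum of Taylor remainders gives $\lvert\ln\Phi^0\rvert\le\frac{8}{k+1}(n/4-1)^{-k}$. A single scalar $\varepsilon$ (bottom triangles get $1/n+\varepsilon$, top ones $1/n-\varepsilon$) then closes the product exactly via $\ln\Phi^*=-\varepsilon(1+O(1/n))=-\ln\Phi^0$, and the range is $2\lvert\varepsilon\rvert$. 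Without the closure condition, your argument has no mechanism producing a nonzero lower limit on the perturbation, so it cannot yield the stated bound.

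Two secondary quantitative points. First, your residual estimate $b^k\sup\lvert\partial_x^k\phi\rvert/k!$ with $b\sim 1/n$ does not match the actual computation: the relevant small parameter is $u=4/n^2$, the arguments $iu$ range up to $4/n$, and the paper's bound comes from summing $2n$ Taylor remainders each of size $O\bigl((4/n)^{k+1}/(k+1)\bigr)$ --- there is no $1/k!$ gain, only $1/(k+1)$, which is where the $1/\log_2 n$ in the statement comes from. Second, your reduction of general odd $n$ to $n'=2^k+1$ via Lemma~\ref{add2} is correct in spirit, but it costs a factor $(n/n')^{\Theta(\log n)}$ in the range (since the exponent $\log_2 n'$ differs from $\log_2 n$ by up to $1$), which is why the final bound carries the polynomial prefactor $8n^4$ rather than being absorbed into a $(1+O(\log n/n))$ factor.
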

  
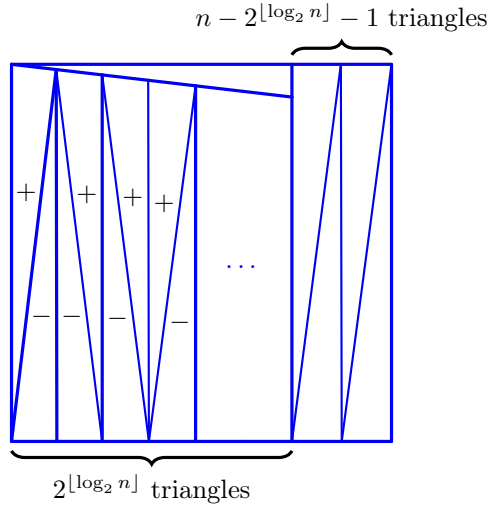
\begin{figure}[!ht]
\begin{tikzpicture}[scale=5,
edge/.style={color=blue!95!black,thick,line join=bevel,line cap=round},
thickedge/.style={edge,very thick}]

\coordinate (1) at (1.00000,1.00000);
\coordinate (2) at (0.000000,1.00000);
\coordinate (3) at (0.000000,0.000000);
\coordinate (4) at (0.117621,0.986162);
\coordinate (5) at (0.119322,0.000000);
\coordinate (6) at (0.238640,0.971925);
\coordinate (7) at (0.238644,0.000000);
\coordinate (8) at (0.359659,0.957687);
\coordinate (9) at (0.361519,0.000000);
\coordinate (10) at (0.484386,0.943013);
\coordinate (11) at (0.484393,0.000000);
\coordinate (12) at (0.609114,0.928340);
\coordinate (13) at (0.611156,0.000000);
\coordinate (14) at (0.737911,0.913187);
\coordinate (14s) at (0.737911,1);
\coordinate (15) at (0.737918,0.000000);
\coordinate (16) at (0.866708,1);
\coordinate (17) at (0.868959,0.000000);
\coordinate (18) at (1.00000,0.882353);
\coordinate (19) at (1.00000,0.000000);

\draw[thickedge] (1) -- (2);
\draw[thickedge] (1) -- (18);
\draw[thickedge] (2) -- (3);
\draw[thickedge] (2) -- (4);
\draw[thickedge] (3) -- (5);
\draw[thickedge] (5) -- (7);
\draw[thickedge] (7) -- (9);
\draw[thickedge] (9) -- (11);
\draw[thickedge] (11) -- (13);
\draw[thickedge] (13) -- (15);
\draw[thickedge] (15) -- (17);
\draw[thickedge] (17) -- (19);
\draw[thickedge] (18) -- (19);
\draw[thickedge] (4) -- (6);
\draw[thickedge] (6) -- (8);
\draw[thickedge] (8) -- (10);
\draw[thickedge] (10) -- (12);
\draw[thickedge] (12) -- (14);
\draw[thickedge] (14) -- (14s);

\draw[thickedge] (3) -- (4);
\draw[thickedge] (4) -- (5);
\draw[edge] (4) -- (7);
\draw[thickedge] (6) -- (7);

\draw[edge] (6) -- (9);
\draw[edge] (8) -- (9);
\draw[edge] (9) -- (10);
\draw[thickedge] (10) -- (11);

\draw[thickedge] (14) -- (15);
\draw[edge] (15) -- (16);
\draw[edge] (16) -- (17);
\draw[edge] (17) -- (1);   

\draw[very thick,decorate,decoration={brace,amplitude=5pt},xshift=0cm,yshift=-0.025cm] (0.737911,0) -- (0,0) node [black,midway,yshift=-0.5cm] {$2^{\lfloor \log_2 n \rfloor}$ triangles};
\draw[very thick,decorate,decoration={brace,amplitude=5pt},xshift=0cm,yshift=0.025cm] (0.737918,1.00000) -- (1,1.000000) node [black,midway,yshift=0.5cm] {$n-2^{\lfloor \log_2 n \rfloor}-1$ triangles};

\node[edge] at ($(12)!.5!(13)$) {$\dots$}; 



\node at ($(8)!.5!(9)!0.33!(10)$) {$+$}; 
\node at ($(6)!.5!(8)!0.33!(9)$) {$+$}; 
\node at ($(4)!.5!(6)!0.33!(7)$) {$+$}; 
\node at ($(3)!.5!(2)!0.33!(4)$) {$+$}; 

\node at ($(3)!.5!(4)!0.33!(5)$) {$-$}; 
\node at ($(4)!.5!(7)!0.33!(5)$) {$-$}; 
\node at ($(6)!.5!(7)!0.33!(9)$) {$-$}; 
\node at ($(9)!.5!(10)!0.33!(11)$) {$-$}; 


\end{tikzpicture} 
\caption{A dissection of the square into $n$ triangles. The $(+,-)$-sequence is determined by the first $2^{\lfloor\log_2 n\rfloor}$ terms in the Thue--Morse sign-sequence.}\label{fig:family_superpoly}
\end{figure}

\begin{proof}
First consider the case $n=2^k+1$.
 As in Figure~\ref{fig:family_9}, we start by cutting a flat triangle
of area $\frac{1}{n}$ from the top edge,
see Figure~\ref{fig:general}.
From the trapezoid $PQRS$ that remains, we cut $n-1$ triangles
of areas $a_1,a_2,\ldots,a_{n-1}$ from left to right.
Of course, the areas must have the correct sum:
\begin{equation*}
  a_1+a_2+\cdots+a_{n-1} = \frac{n-1}{n}.
\end{equation*}
For each triangle, we choose an orientation $\tau_i$:
It either has a side on the bottom side $PQ$ ($\tau_i=-1$) or on the top side $RS$ ($\tau_i=+1$).
The family of dissections in Theorem~\ref{thm:n9} corresponds to the periodic orientation sequence
$-1,+1,+1,-1,\,\allowbreak -1,+1,+1,-1,\allowbreak\ldots$.

\begin{figure}[!hbtp]
  \centering
  \includegraphics{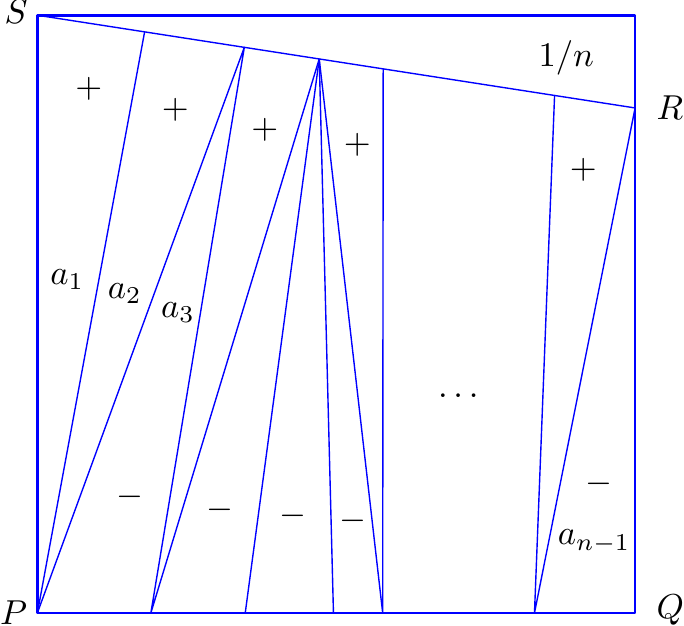}
  \caption{A generalization of the construction from Section~\ref{ssec:n9}}
  \label{fig:general}
\end{figure}

We want to choose the areas and orientations in such a way that the
last triangle with area~$a_{n-1}$ ends flush with the right boundary $QR$.
The calculations for this condition are illustrated schematically in Figure~\ref{fig:calculations}.
Let $O$ be the intersection of the lines spanned by $PQ$ and
$RS$.
Then $\overline{PO}=n/2$, and the triangle ${POS}$ has area $n/4$. 

\begin{figure}
  \centering
  \includegraphics{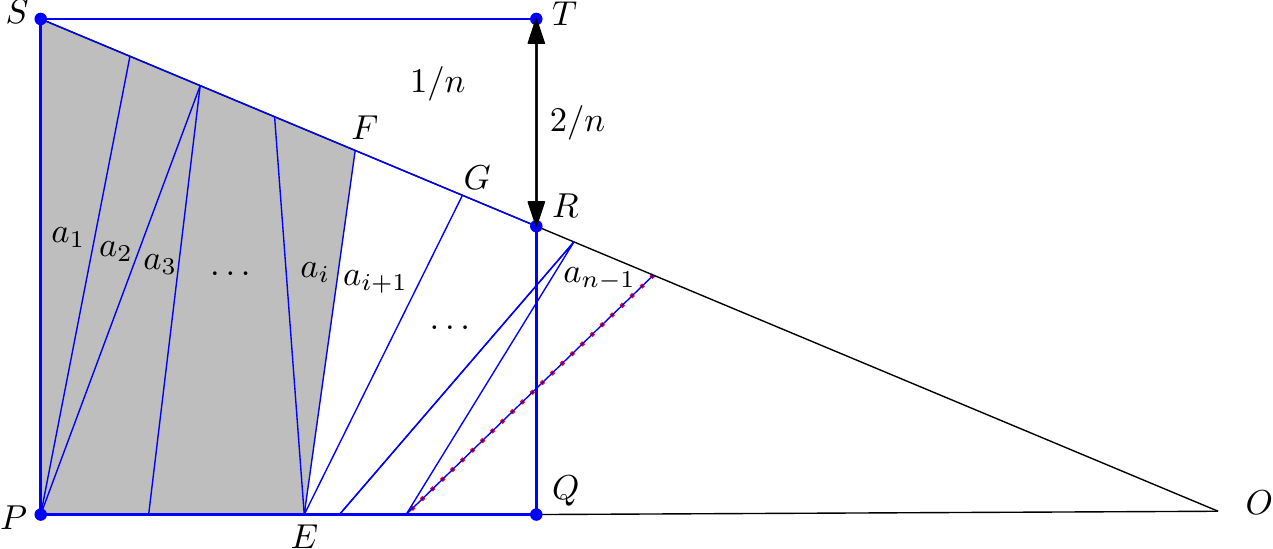}
  \caption{Cutting off the successive triangles $a_1,a_2,\ldots$.  Here,
    the right edge of the last triangle $a_{n-1}$ misses to end up at the
    vertical edge $QR$.  The area $\frac{1}{n}$ of the triangle
    ${RST}$ has been
    exaggerated in order to keep the proportions manageable.  The
    shaded area is $A_i$.}
  \label{fig:calculations}
\end{figure}

Let $A_i=a_1+a_2+\cdots+a_i$.
Suppose that the $(i+1)$-th triangle has orientation $\tau_{i+1}=+1$, as in
Figure~\ref{fig:calculations}.
The relation between its vertices $F$ and $G$ is best understood by
looking at the remaining part ${EFO}$ and ${EGO}$ of the big triangle,
of area $n/4-A_{i}$ and $n/4-A_{i+1}$, respectively.
The ratio of these areas equals the ratio of the lengths $\overline{FO}$ and $\overline{GO}$:
\begin{equation*}
	\frac{\overline{GO}}{\overline{FO}}
= \frac {n/4-A_{i+1}}{n/4-A_i}.
\end{equation*}
If the $(i+1)$-th triangle had orientation $\tau_{i+1}=-1$, we would instead foreshorten the lower side~$EO$ by this proportion.
To have the right edge of $a_{n-1}$ vertical, the product of the foreshortening factors for the top triangles must be equal to the product of the foreshortening factors for the bottom triangles, namely $\overline{RO}/\overline{SO} = \overline{QO}/\overline{PO}$.
We can write this in product form:
\begin{equation}
  \label{eq:balance}
\prod_{i=1}^{n-1}
 \Bigl(\frac {n/4-A_{i}}{n/4-A_{i-1}}\Bigr)^{\tau_i}
 = 1.
\end{equation}

Now we proceed as follows.
We fix the orientations according to the Thue--Morse sequence
$\tau_i=s_{i}$. 
We initially set all areas to the ``ideal'' area $a_i^0:=\frac{1}{n}$ for all $i\geq 1$.
Then we transform the product in \eqref{eq:balance} into a sum of logarithms and develop them into a power series in~$\frac{1}{n}$.
The Thue--Morse sequence will cancel all terms up to degree $k$ in the power series, and thus fulfills~\eqref{eq:balance} to a high degree.
Finally, we perturb the areas in order to satisfy
\eqref{eq:balance} exactly.

If we bound the absolute deviation from the ideal area by $\varepsilon$,
the greatest effect is achieved if we
 perturb every bottom triangle in one direction,
 $a_i = \frac{1}{n} + \varepsilon$, and every top triangle in the opposite direction
 $a_i = \frac{1}{n} - \varepsilon$. The value of $\varepsilon$ may be
 negative,
 and it is obviously bounded by
 \begin{equation*} 
|\varepsilon|< \frac{1}{n}.
    \end{equation*}

By Definition~\ref{def:thuemorse_basic}, successive triangles $a_{2j-1}$ and $a_{2j}$ have opposite orientations: $s_{2j}=-s_{2j-1}$.
Therefore, the perturbations cancel after an even number of triangles, and we obtain
\begin{align} 
  A_{i} &=  
\label{odd-perturb}
          \begin{cases}
A_{i-2}+a_{i-1} +a_i 
=A_{i-2}+(\tfrac 1n\pm \varepsilon)
+(\tfrac 1n\mp \varepsilon)  
= 
\frac i n, &\text{for $i$ even,}\\ 
A_{i-1}+a_i 
= \frac in + s_i\varepsilon, &\text{for odd $i$.}\\
          \end{cases}
\end{align}
Let us denote the product in \eqref{eq:balance} by $\Phi$.
We split it into two factors $\Phi=\Phi^0 \times \Phi^*$.
The factor $\Phi^0$ is the value when we substitute the ideal values $A_i^0=i/n$,
and $\Phi^*$ denotes the deviation caused by perturbing $A_i^0$ to $A_i$.
\begin{align*}
\Phi & 
=
\prod_{i=1}^{n-1}
 \Bigl(\frac {n/4-A_{i}}{n/4-A_{i-1}}
\Bigr)^{s_i}
=\Phi^0 \times \Phi^*
\\  
&=
\prod_{i=1}^{n-1}
 \Bigl(\frac {n/4-A_{i}^0}{n/4-A_{i-1}^0}
\Bigr)^{s_i}
\times
\prod_{i=1}^{n-1}
 \Bigl(\frac {n/4-A_{i}}{n/4-A_{i}^0}
\Bigr)^{s_i}
\cdot
\prod_{i=1}^{n-1}
 \Bigl(\frac {n/4-A_{i-1}}{n/4-A_{i-1}^0}
\Bigr)^{-s_i}
\\
\intertext{We change the iteration variable in the last product and get}
\Phi^*
&=
\prod_{i=1}^{n-1}
 \Bigl(\frac {n/4-A_{i}}{n/4-A_{i}^0}
\Bigr)^{s_i}
\cdot
\prod_{i=0}^{n-2}
 \Bigl(\frac {n/4-A_{i}}{n/4-A_{i}^0}
\Bigr)^{-s_{i+1}}
\end{align*}
Since
$A_i$
differs from $A_i^0$ only for odd $i$, we can simplify this:
\begin{align*}
\Phi^*
&=
\prod_{\substack{1\le i\le n-1\\i\text{ odd}}}
 \Bigl(\frac {n/4-A_{i}}{n/4-A_{i}^0}
\Bigr)^{s_i}
\cdot
\prod_{\substack{0\le i\le n-2\\i\text{ odd}}}
 \Bigl(\frac {n/4-A_{i}}{n/4-A_{i}^0}
\Bigr)^{-s_{i+1}} 
=\left[
\prod_{\substack{1\le i\le n-2\\i\text{ odd}}}
 \Bigl(\frac {n/4-A_{i}}{n/4-A_{i}^0}
\Bigr)^{s_i}
\right]^2
\end{align*}
The last equation holds because $s_{i}=-s_{i+1}$ for odd~$i$.
Now we substitute the values from~\eqref{odd-perturb} and take
logarithms.
\begin{align}
\nonumber
  \ln \Phi^*
&=
2
\sum_{\substack{0\le i\le n-2\\i\text{ odd}}}
s_i
\ln
\frac {n/4-i/n-s_{i}\varepsilon}{n/4-i/n}
\nonumber
\\ &=
2
\sum_{\substack{0\le i\le n-2\\i\text{ odd}}}
s_i
\ln
\left(1-\frac {s_{i}\varepsilon}{n/4-i/n}\right) 
\nonumber
\\ &=
2
\sum_{\substack{0\le i\le n-2\\i\text{ odd}}}
\left(
s_i
\frac {-s_{i}\varepsilon}{n/4-i/n}+O(\tfrac \varepsilon n)^2\right)
\nonumber
\\ &=
-2
\sum_{\substack{0\le i\le n-2\\i\text{ odd}}}
\left(
 \frac {4s_i^2\eps}n(1+O(\tfrac{1}{n}))+O(\tfrac \varepsilon n)^2
\right)
\nonumber
\\ &
=
-2 \frac{n-1}2
\cdot
\frac {4\eps}n(1+O(\tfrac 1n))+O(\tfrac {\varepsilon^2} n)
=
-
\varepsilon
(1+O(\tfrac 1n))
\label{eq-eps}
\end{align}
Now we look at the term $\Phi^0$ and rewrite it 
in terms of a more convenient parameter $$u:=4/n^2$$
as follows:
\begin{gather}
  \Phi^0 
=
\prod_{i=1}^{n-1}
 \Bigl(\frac {n/4-A_{i}^0\hfill}{n/4-A_{i-1}^0}
\Bigr)^{s_i} 
=
\prod_{i=1}^{n-1}
 \Bigl(\frac {n/4-i/n\hfill}{n/4-(i-1)/n}
\Bigr)^{s_i} 
=
\prod_{i=1}^{n-1}
 \Bigl(\frac {1-iu\hfill}{1-(i-1)u}
\Bigr)^{s_i},
\nonumber
\\
 \label{Phi0}
  \ln \Phi^0 
=
\sum_{i=1}^{n-1}
{s_i}
\ln (1-iu)
-
\sum_{i=1}^{n-1}
{s_i}
\ln (1-(i-1)u).
\end{gather}
We use the Taylor formula 
$$\ln (1-x) = -x-\frac{x^2}2
-\frac{x^3}3
-\cdots
-\frac{x^{k}}{k}
-\frac{x^{k+1}}{k+1}
/{(1-\theta x)}^{k}
 =f(x) +\rho(x),
$$
for some $\theta$ with $0\le \theta \le 1$, 
with a polynomial $f(x)$ of degree $k$ and the remainder term $\rho(x)$,
which is bounded by $|\rho(x)|\le 
{x^{k+1}}/[({k+1})(1- x))^{k}]$ for positive~$x$.
This gives
\begin{align*}
  \ln \Phi^0 
&=
\sum_{i=1}^{n-1}
{s_i}f(iu)
-
\sum_{i=1}^{n-1}
{s_i}
f((i-1)u)
+
\sum_{i=1}^{n-1}s_i[\rho(iu)-\rho((i-1)u)].
\end{align*}
By Lemma~\ref{lem:annihilate},
the first two terms can be rewritten in terms of a degree-0 (constant)
polynomial~$F$, and they cancel:
\begin{equation}
  \label{eq:cancel1}
\sum_{i=1}^{2^k}
{s_i}f(iu)
-
\sum_{i=1}^{2^k}
{s_i}
f(-u+iu)
= F(0) - F(-u) = 0.
\end{equation}
The remainder terms are bounded as follows. We assume $n\ge5$ and use
the bound
 $ iu \le nu = 4/n$.
\begin{align*}
\lvert\ln \Phi^0|=
\left|\sum_{i=1}^{n-1}s_i[\rho(iu)-\rho((i-1)u)]\right|
\le 2n \cdot \frac 1{k+1} \Bigl( \frac 4n \Bigr)^{k+1} \Bigl(\frac 1{1-4/n}\Bigr)^{k}
= \frac 8{k+1} 
\Bigl( \frac 1{n/4-1} \Bigr)^{k}
\end{align*}
To satisfy \eqref{eq:balance} and get a dissection, we have to set
$\ln \Phi^* +\ln \Phi^0=0$, or,
using \eqref{eq-eps},
\begin{align*}
\ln \Phi^* = - \varepsilon (1+O(\tfrac 1n)) = - \ln \Phi^0,
\end{align*}
from which we get
\begin{equation}
  \label{eq:eps-bound+}
  |\varepsilon| \le 
{ \lvert\ln \Phi^0\rvert} 
\cdot
(1+O(\tfrac 1n)).
\end{equation}
The expression
$\lvert\ln \Phi^0\rvert$ has been bounded above.
Substituting $k=\log_2(n-1)$ and
assuming $k\ge1$,
we get 
\begin{equation}
  \label{eq:eps}
|\varepsilon| \le 
 \frac{8}{(n/4-1)^{\log_2 (n-1)}(\log_2(n-1)+1)}  (1+O(\tfrac 1n)) .
\end{equation}
The ``$-1$'' terms in the denominator can be swallowed by increasing the error term.
\begin{equation*}
|\varepsilon| \le 
\frac{8}{(n/4)^{\log_2 n}(\log_2n+1)} 
(1+O(\tfrac {\log n}n)) 
=\frac{8n^2}{n^{\log_2 n}(\log_2n+1)} 
(1+O(\tfrac {\log n}n)) .
\end{equation*} 
This is valid for values of $n$ where $n-1$ is a power of 2.
In general, let
$n=2^k+\ell$, where $k = \lfloor \log_2 n \rfloor$ and $\ell\ge 1$ is odd.
By the scaling trick of Lemma~\ref{add2}, we can reduce this to the case $n' =
2^k+1\ge n/2$  
and obtain the bound 
\begin{align*}
|\varepsilon| \le \frac{2^k+1}{2^k+\ell}|\varepsilon'|\le
|\eps'| 
&\le
\frac{8n'^2}{n'^{\log_2 n'}(\log_2n'+1)}
(1+O(\tfrac {\log n'}{n'}))
\\
&\le
\frac{8n^2}{(n/2)^{\log_2 n -1 }(\log_2n -1+1)}
(1+O(\tfrac {\log n}{n}))
\\
&=
\frac{8n^2\cdot n/2}{(n/2)^{\log_2 n }\log_2n}
(1+O(\tfrac {\log n}{n})) 
=
\frac{4n^4}{n^{\log_2 n }\log_2n }
(1+O(\tfrac {\log n}{n})). 
\end{align*} 
Since the range is $2|\eps|$, 
Theorem~\ref{thm:superpoly} follows.
\end{proof}

%
%
%
\newcommand\setN{}
\def\setN #1 {#1
\def \findLine ##1:##2\\{
  \ifnum ##1=-1 \let\Fnext=\relax \else
  \let\Fnext=\findLine
  \ifnum #1=##1 \gdef\tableEntry{##2:}\fi\fi
  \Fnext
}%
\findLine
 3:$   -32.000$:$-$\\
 5:$    85.333$:$-$\\
 7:$    60.952$:$-$\\
 9:$    2.0480$:$-$\\
11:$    1.6756$:$-$\\
13:$    1.4178$:$-$\\
15:$    1.2288$:$-$\\
17:$  0.028682$:$ 0.5538$\\
19:$  0.025663$:$ 0.5411$\\
21:$  0.023219$:$ 0.5305$\\
23:$  0.021200$:$ 0.5213$\\
25:$  0.019504$:$ 0.5132$\\
27:$  0.018059$:$ 0.5061$\\
29:$  0.016814$:$ 0.4998$\\
31:$  0.015729$:$ 0.4940$\\
33:$0.00013313$:$ 0.7113$\\
35:$0.00012552$:$ 0.7018$\\
37:$0.00011874$:$ 0.6932$\\
39:$0.00011265$:$ 0.6852$\\
41:$0.00010715$:$ 0.6778$\\
43:$0.00010217$:$ 0.6710$\\
45:$9.7629{\times}10^{-5}$:$ 0.6646$\\
47:$9.3475{\times}10^{-5}$:$ 0.6587$\\
49:$8.9660{\times}10^{-5}$:$ 0.6531$\\
51:$8.6143{\times}10^{-5}$:$ 0.6478$\\
53:$8.2893{\times}10^{-5}$:$ 0.6428$\\
55:$7.9879{\times}10^{-5}$:$ 0.6382$\\
57:$7.7076{\times}10^{-5}$:$ 0.6337$\\
59:$7.4463{\times}10^{-5}$:$ 0.6295$\\
61:$7.2022{\times}10^{-5}$:$ 0.6255$\\
63:$6.9735{\times}10^{-5}$:$ 0.6217$\\
65:$1.8172{\times}10^{-7}$:$ 0.7857$\\
67:$1.7630{\times}10^{-7}$:$ 0.7808$\\
69:$1.7119{\times}10^{-7}$:$ 0.7761$\\
71:$1.6636{\times}10^{-7}$:$ 0.7716$\\
73:$1.6181{\times}10^{-7}$:$ 0.7673$\\
75:$1.5749{\times}10^{-7}$:$ 0.7632$\\
77:$1.5340{\times}10^{-7}$:$ 0.7592$\\
79:$1.4952{\times}10^{-7}$:$ 0.7554$\\
129:$6.8719{\times}10^{-11}$:$ 0.8287$\\
257:$6.9405{\times}10^{-15}$:$ 0.8567$\\
513:$1.8289{\times}10^{-19}$:$ 0.8763$\\
1025:$1.2390{\times}10^{-24}$:$ 0.8910$\\
2049:$2.1378{\times}10^{-30}$:$ 0.9025$\\
4097:$9.3410{\times}10^{-37}$:$ 0.9117$\\
8193:$1.0298{\times}10^{-43}$:$ 0.9192$\\
16385:$2.8583{\times}10^{-51}$:$ 0.9255$\\
32769:$1.9941{\times}10^{-59}$:$ 0.9309$\\
65537:$3.4936{\times}10^{-68}$:$ 0.9356$\\
131073:$1.5359{\times}10^{-77}$:$ 0.9396$\\
262145:$1.6936{\times}10^{-87}$:$ 0.9432$\\
524289:$4.6822{\times}10^{-98}$:$ 0.9464$\\
1048577:$3.2443{\times}10^{-109}$:$ 0.9492$\\
-1:: \\}
 \begin{table}[!ht]
\def\takeSecond #1:#2:{#2}
\def \g{\expandafter\takeSecond\tableEntry}
  \centering
 \def\star{\rlap{$^*$}}
\newif\ifflip
\newif\ifflipp
\let\small=\footnotesize
\def\oppositeS #1{\ifx +#1-\else\ifx-#1+\else\let\opposite\relax\fi\fi\opposite}
\def\small$#1${\ifflip
   \global\flipptrue
   \let\opposite\oppositeS
\else
   \global\flippfalse
   \let\opposite\relax
\fi
\raise 0,123ex\hbox{$\scriptstyle\opposite#1\relax$}}
\newcommand\checksign{}
\newcommand\setsign{}
\def\setsign#1{\ifx #1-\global\flipptrue\else\global\flippfalse\fi}
\def\checksign #1{\ifx #1-\ifflipp\else\errmessage{wrong sign}\fi
\else\ifflipp\errmessage{wrong sign}\fi\fi}
\noindent \hskip 0pt minus 0,5em
  \begin{tabular}{rlllccc}
$n$&optimal sign sequence $s$&$\hfil\eps$
&\hfil \RMS&$\Lambda(\range_{\mathrm{opt}})$&$\Lambda(\range_C)$&$\Lambda(\range^*)$\\[1ex]
\setN  3 \star &\fliptrue \small${+}{-}$& $ \checksign-0.16667$& $  0.13608$& $
0.7943$& $ 0.7943$&\g\\
\setN  5 \star &\flipfalse \small${+}{-}{-}{+}$& $  \checksign+0.01250$& $  0.01118$& $
0.9935$& $ 0.9935$&\g\\
\setN  7 &\fliptrue \small${+}{-}{+}{-}{-}{+}$& $\checksign-0.00010248$&
$0.00009488$& $ 1.2468$& $ 0.8584$&\g\\
\setN  9 \star &\fliptrue \small${+}{-}{-}{+}{-}{+}{+}{-}$& $\checksign-0.00016360$&
$0.00015424$& $ 1.0734$& $ 1.0734$&\g\\
\setN 11 &\fliptrue \small${+}{-}{+}{-}{+}{-}{-}{+}{-}{+}$&
$\checksign-4.1201{\times}10^{-6}$& $3.9284{\times}10^{-6}$& $ 1.1879$& $
0.9958$&\g\\
\setN 13 &\flipfalse \small${+}{-}{-}{+}{+}{-}{-}{+}{-}{+}{+}{-}$&
$\checksign+5.9928{\times}10^{-6}$& $5.7577{\times}10^{-6}$& $ 1.0927$& $
0.9403$&\g\\
\setN 15 &\fliptrue \small${+}{-}{+}{-}{+}{-}{+}{-}{-}{+}{-}{+}{-}{+}$&
$\checksign-5.2871{\times}10^{-7}$& $5.1079{\times}10^{-7}$& $ 1.1404$& $
0.8982$&\g\\
\setN 17 \star &\fliptrue \small${+}{+}{-}{+}{-}{-}{+}{-}{-}{+}{-}{+}{-}{-}{+}{+}$& $\checksign-3.4708{\times}10^{-8}$& $3.3672{\times}10^{-8}$& $ 1.1930$& $ 1.1076$&\g\\
\setN 19 &\flipfalse \small${+}{-}{-}{+}{-}{-}{+}{-}{+}{+}{+}{+}{-}{-}{+}{-}{+}{-}$& $\checksign+4.2052{\times}10^{-8}$& $4.0931{\times}10^{-8}$& $ 1.1413$& $ 1.0699$&\g\\
\setN 21 &\fliptrue \small${+}{-}{+}{-}{-}{+}{-}{+}{+}{-}{-}{-}{+}{+}{+}{+}{-}{-}{-}{+}$& $\checksign-5.5778{\times}10^{-9}$& $5.4434{\times}10^{-9}$& $ 1.1702$& $ 1.0383$&\g\\
\setN 23 &\flipfalse \small${+}{+}{-}{-}{+}{-}{-}{+}{+}{-}{-}{+}{-}{+}{+}{-}{-}{+}{-}{+}{-}{+}$& $\checksign+3.5359{\times}10^{-9}$& $3.4581{\times}10^{-9}$& $ 1.1503$& $ 1.0114$&\g\\
\setN 25
&\fliptrue \small${+}{+}{+}{-}{+}{-}{-}{-}{-}{-}{+}{+}{+}{-}{-}{-}{-}{+}{+}{+}{+}{-}{-}{+}$&
$\checksign-7.457
{\times}10^{-10}$& $7.307 
{\times}10^{-10}$& $ 1.1660$& $ 0.9880$&\g\\
\setN 27
&
& $\checksign-1.266{\times}10^{-10}$& $1.242{\times}10^{-10}$& $ 1.1875$& $ 0.9675$&\g\\
\setN 29
\global\def\checksign#1{} 
&
& $\checksign+9.026
{\times}10^{-12}$& $8.869
{\times}10^{-12}$& $ 1.2297$& $ 0.9492$&\g\\
\setN 31
&
& $\checksign+2.446
{\times}10^{-12}$& $2.406{\times}10^{-12}$& $ 1.2373$& $ 0.9329$&\g\\
\setN 33 \star
&
& $\checksign-1.423
{\times}10^{-12}$& $1.401
{\times}10^{-12}$& $ 1.2277$& $ 1.1237$&\g\\
\setN 35
&
& $\checksign+1.777
{\times}10^{-13}$& $1.752 
{\times}10^{-13}$& $ 1.2537$& $ 1.1066$&\g\\
\setN 37
&
& $\checksign+1.100
{\times}10^{-14}$& $1.086 
{\times}10^{-14}$& $ 1.2930$& $ 1.0909$&\g\\
\setN 39 
&
& $\checksign-2.119{\times}10^{-14}$& $2.092
{\times}10^{-14}$& $ 1.2610$& $ 1.0765$&\g\\
  \end{tabular}

  \caption{The optimal sign sequence in comparison to the systematic construction. The values of the form
    $n=2^k+1$ are marked with a star.}
  \label{tab:construction}
\end{table}







\subsection{Experimental improvements}
\label{ssec:experiments}

For small values of $n$, we have computed the optimal dissection
within the above framework by trying all sign sequences with equally
many + and $-$ signs. 
Table~\ref{tab:construction} reports the optimal sign sequence (for
$n\le 25$) and the
resulting value of $\eps$. In these calculations, we have always kept the
upper right triangle that is cut off initially at its original size $1/n$. 
We miss some solutions with a smaller range in this way,
but for larger $n$, the loss is negligible.
The range is $\range=2\eps$ and the~\RMS, which is also reported, is $\eps\sqrt{\frac{n-1}n}$,
since we have $n-1$ triangles with an error of $\eps$ and one triangle
with error~0.
The best sign sequence was always unique up to flipping all signs,
and
by flipping signs if necessary, we have ensured a positive~$\eps$.

We can observe that the quality of these solutions is not monotone
in~$n$. The solution for $n=7$ has smaller errors than the Thue--Morse
solution for $n=9$, which is optimal within its class. 
For $n=9$, it is therefore better to use the solution for $n=7$ and
extend it with the help of Lemma~\ref{add2}.
The next inversion occurs between $n=11$ and $n=13$.
For $n=17$ and $n=33$, the Thue--Morse sequence is also not the winner.
For $n=17$, it is only the third-best solution, with $\eps$ about ten times larger
than for the optimum.
For $n=33$, the best solution is about 75 times better than the Thue--Morse sequence.
The reason is that, when we
look at the difference in the left-hand side of~\eqref{eq:cancel1} 
for the full power series of $\ln(1-x)$ instead of the truncated
series $f(x)$,
 the lower-order terms
can be very small for the particular value of $u$,
despite the fact that they don't cancel systematically.
In Section~\ref{ssec:heuristic} we attempt to give another explanation for this phenomenon.

Theorem~\ref{thm:superpoly} predicts a decrease roughly of the order
$\range \approx \mathrm{const}/2^{(\log_2 n)^2}$. Therefore we report
the value
$$\Lambda(\range) = \sqrt{\log_n  \frac 1\range},$$ 
which should converge
to a constant if the Thue--Morse sequence gives the optimal value.
A~larger value $\Lambda(\range)$ indicates a smaller range and therefore a better solution.
We also report the corresponding values $\Lambda(\range_C)$ for the general construction of
Theorem~\ref{thm:superpoly}, which uses the Thue--Morse sequence for
the closest power of~$2$ and tiles it with triangles of area $1/n$.
For comparison, the last column gives $\Lambda(\range^*)$ for the ``promised''
value $\range^*=2|\eps|$ in the proof of
Theorem~\ref{thm:superpoly}
 that results from ignoring the $O(1/n)$ term in~\eqref{eq:eps}. 
 This converges to $1$ as $n$ increases, with
 intermediate deteriorations between the powers of~$2$.

Table~\ref{tab:systematic} gives the results for the systematic
construction for the powers of two, together with the promised range
$\range^*$.
Both are also expressed in terms of the function $\Lambda$ as above, to
exhibit the converging behavior.
The function $\Lambda$ makes the differences appear small, while they are really
more spectacular.
For example, for $n=65537$, the ``promised'' range is
$1.74\times10^{-68}$, but the true range is only $4.15\times10^{-105}$.
\begin{table}
\def\takeSecond #1:#2:{#2}
\def\takeFirst #1:#2:{#1}
\newcommand \ggu{\expandafter\takeFirst\tableEntry}
\newcommand \g{\expandafter\takeSecond\tableEntry}
  \centering
  \begin{tabular}{rllcc}
$n$&\hfil $\range_C=2|\eps|$&\hfil $\range^*$&$\Lambda(\range_C)$&$\Lambda(\range^*)$\\[1ex] 
\setN       3 & $   0.33333333$& \hfil$    -$& $ 0.7943$&\g\\ %
\setN       5 & $     0.02500000$& \hfil$    -$& $ 0.9935$&\g\\ %
\setN       9 & $0.00032719$&\ggu& $ 1.0734$&\g\\
\setN      17 & $6.7688{\times}10^{-7}$&\ggu& $ 1.1076$&\g\\
\setN      33 & $2.1229{\times}10^{-10}$&\ggu& $ 1.1237$&\g\\
\setN      65 & $9.8506{\times}10^{-15}$&\ggu& $ 1.1326$&\g\\
\setN     129 & $6.6218{\times}10^{-20}$&\ggu& $ 1.1385$&\g\\
\setN     257 & $6.3377{\times}10^{-26}$&\ggu& $ 1.1428$&\g\\
\setN     513 & $8.5151{\times}10^{-33}$&\ggu& $ 1.1465$&\g\\
\setN    1025 & $1.5875{\times}10^{-40}$&\ggu& $ 1.1497$&\g\\
\setN    2049 & $4.0679{\times}10^{-49}$&\ggu& $ 1.1525$&\g\\
\setN    4097 & $ 1.4210{\times}10^{-58}$&\ggu& $ 1.1552$&\g\\
\setN    8193 & $6.7214{\times}10^{-69}$&\ggu& $ 1.1576$&\g\\
\setN   16385 & $4.2794{\times}10^{-80}$&\ggu& $ 1.1598$&\g\\
\setN   32769 & $3.6489{\times}10^{-92}$&\ggu& $ 1.1619$&\g\\
\setN   65537 & $4.1484{\times}10^{-105}$&\ggu& $ 1.1638$&\g\\
\setN  131073 & $6.2638{\times}10^{-119}$&\ggu& $ 1.1656$&\g\\
\setN  262145 & $1.2518{\times}10^{-133}$&\ggu& $ 1.1673$&\g\\
\setN  524289 & $3.3006{\times}10^{-149}$&\ggu& $ 1.1689$&\g\\
\setN 1048577 & $1.1451{\times}10^{-165}$&\ggu& $ 1.1704$&\g\\
\end{tabular}

  \caption{The range $\range_C$ of the systematic construction with the
    Thue--Morse sequence,
and in comparison, the upper bound $\range^*$ from the proof of Theorem~\ref{thm:superpoly}.}
  \label{tab:systematic}
\end{table}

\subsection{Towards a family of triangulations of exponential decreasing range}
\label{ssec:triangulations}

So far, we have constructed families of good \emph{dissections}.
We now describe a family of \emph{triangulations} that is rich enough
so that we may hope to find triangulations with small area range
among them.
In contrast to Section~\ref{superpolynomial}, we have no systematic
construction that would yield good candidates.
Our family is
 characterized by
containing a
path $P$ through all interior nodes, starting in the upper left corner and
terminating at the right
edge, see Figure~\ref{fig:construction}.
Such a structure was already observed by Mansow
in many triangulations that she found
\cite[Observation 4 and Figure 3.21 on p.~34]{mansow_ungerade_2003}.
In addition,
we restrict all additional vertices to lie on the bottom edge.
The combinatorial type is characterized by a sequence of 1's and
2's that sum up to $n$. 
At each step, we either add ``2'' triangles that connect an edge of
$P$ with the upper right corner and the bottom side, or we add ``1''
triangle with an edge on the bottom side.
 When all areas are set to $1/n$, the path will
not terminate at the right edge, and thus the triangle areas must be
adjusted
in order to obtain a valid triangulation.

\begin{figure}[htb]
  \centering
  \includegraphics{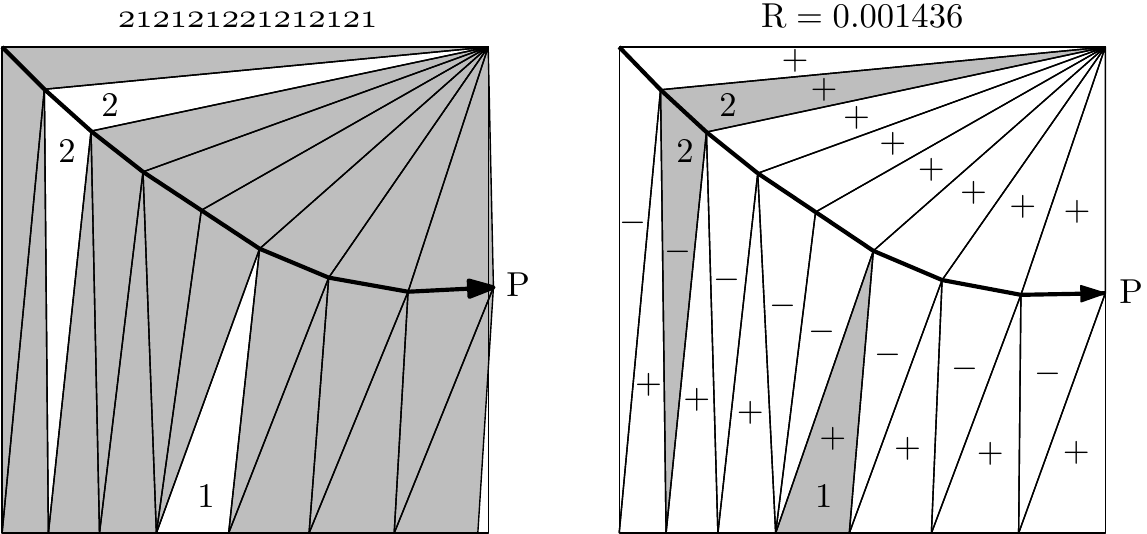}
  \caption{A typical specimen of our family of triangulations with $n=23$
    triangles.
In the left picture, all triangles have area $1/n$, and the path $P$
does not end on the right edge. In the right picture, the areas are adjusted.
  The signs in the triangles indicate whether their areas are bigger or
    smaller than the ideal value $1/n$. All triangles with the same
    sign have exactly the same area, in this case
 $ 1/n+\frac{8}{23}\range$ and
 $ 1/n-\frac{15}{23}\range$, respectively.}
  \label{fig:construction}
\end{figure}

For small values of $n$, we have enumerated all combinatorial types of
this family.
Table~\ref{tab:triangulations} records the triangulations that yield
that smallest range.
It is not obvious which areas should be increased or which should
be decreased when the areas are adjusted. Thus we don't even ensure that
the reported ranges are optimal for the given combinatorial types.
For example, the best triangulation with $n=11$ triangles found by
Mansow has range $0.000{,}004{,}152$, as reported in Table~\ref{tab:diss_discr}.
This triangulation falls in our family: its pattern is
\oldstylenums{1212221}.  The solution reported by Mansow \cite[Figure
3.14 on p.~28]{mansow_ungerade_2003} uses more than two distinct
areas.  With our method of adjusting the areas either up or down to
two distinct values, we could only reach a range of
$0.000{,}004{,}673$, see Table~\ref{tab:triangulations}.
%
\begin{table}[htb]
  \centering
\def \osn#1 {\oldstylenums{#1}}
  \begin{tabular}{|r|l|l|}
\hline
    $n$&code&\hfil range $\pm$\range\\
\hline
  3 & \osn21 & $    -0.25$ \\ 
  5 & \osn1210 & $-0.0225425$ \\ 
  7 & \osn21210 & $\phantom+0.00312387$ \\ 
  9 & \osn1221100 & $-0.00337346$ \\ 
 11 & \osn1212221 &  $\phantom+0.0000046733325$ \\ 
 13 & \osn22212211 & $-0.000210277$ \\ 
 15 & \osn22111212100 & $\phantom+5.41030{\times}10^{-5}$ \\ 
 17 & \osn22211121221 & $\phantom+6.93952{\times}10^{-6}$ \\ 
 19 & \osn1211212222110 & $\phantom+4.10530{\times}10^{-7}$ \\ 
 21 & \osn2112212222211 & $-1.70884{\times}10^{-6}$ \\ 
 23 & \osn21121222222211 & $-1.25472{\times}10^{-7}$ \\ 
 25 & \osn2211112212222121 & $\phantom+2.16265{\times}10^{-8}$ \\ 
 27 & \osn21222122211221121 & $\phantom+7.89259{\times}10^{-9}$ \\ 
 29 & \osn12112212121212112210 & $-2.69003{\times}10^{-8}$ \\ 
 31 & \osn12212212222212210000 & $\phantom+2.97133{\times}10^{-9}$ \\ 
 33 & \osn211121121212212112121000 & $\phantom+9.03785{\times}10^{-10}$ \\ 
 35 & \osn22212221112212211221110 & $\phantom+1.00009{\times}10^{-12}$ \\ 
 37 & \osn121222112222212112211211 & $\phantom+1.88583{\times}10^{-11}$ \\ 
 39 & \osn2121211222212121221122112 & $\phantom+7.32381{\times}10^{-14}$ \\ 
 41 & \osn211122212121222112212112211 & $\phantom+4.53634{\times}10^{-13}$ \\ 
 43 & \osn1211212221222222112112211211 & $\phantom+1.00055{\times}10^{-16}$ \\ 
%
\hline
  \end{tabular}

  \caption{Best triangulations found. The sign of the range \range\
    indicates in which direction the areas of the type-1 triangles
    that sit on the bottom edge were perturbed.}
  \label{tab:triangulations}
\end{table}

It may happen that the path reaches the bottom-right corner prematurely
(Figure~\ref{fig:examples}b).
In such cases, the remaining area becomes a triangle, which can be trivially
partitioned into the right number of equal-sized triangles. Such
triangles are indicated by 0's in the code of Table~\ref{tab:triangulations}.

\begin{figure}[htb]
  \centering
\ \vbox{%
\halign{\hfil#\hfil\qquad&\hfil#\hfil\cr
  \includegraphics[scale=0.8]{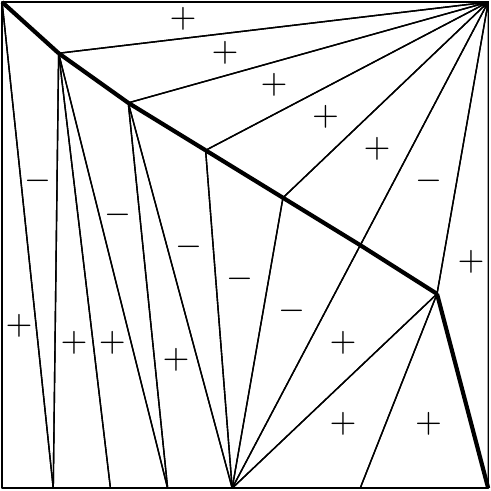}&
  \includegraphics[scale=0.8]{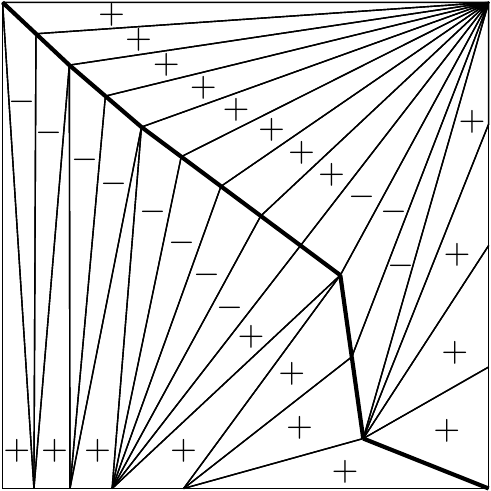}\cr
(a)&(b)\cr}}
  \caption{The best triangulations for $n=19$ triangles
(code \oldstylenums{1211212222110})
 and $n=31$ triangles
(code \oldstylenums{12212212222212210000})
    that we found}
  \label{fig:examples}
\end{figure}

The numbers in the table indicate an exponential decrease, but
in contrast to the case of dissections, we cannot even show a
superpolynomial decrease.

\subsection{A heuristic argument for an exponential decrease}
\label{ssec:heuristic}
The experiments indicate that, as~$n$ gets large, there are much
better solutions than the ones provided by the Thue--Morse sequence. We attempt
a non-rigorous argument, based on an analogy with a random experiment,
why one might even expect an exponentially small range.
We use the setup of the proof of Theorem~\ref{thm:superpoly}.
We view the quantity in~\eqref{Phi0},
\begin{equation}
  \label{eq:T}
T:=\ln \Phi^0 
=
\sum_{i=1}^{n-1}
{s_i}
\ln \frac{1-iu}{1-(i-1)u}
= 
\sum_{i=1}^{n-1}
{s_i}(u+O(u^2))
\end{equation}
as a random variable:
Instead of the Thue--Morse sequence $s_i$, we choose a sign $\tau_i=\pm1$ independently uniformly
at random. 
$T$ is a discrete random variable with $N=2^{n-1}$ equally likely
outcomes.
By the Central Limit Theorem, $T$ is approximately Gaussian with mean 0 and standard
deviation $\sigma\approx \sqrt{n-1}\cdot u \approx 4/n^{3/2}$.
We now make a leap of faith and assume that the error terms $O(u^2)$
in~\eqref{eq:T} act like random fluctuations that eradicate all
systematic dependencies and all traces how the
values were generated. In particular, we assume that,
around the origin, $T$ behaves
like $N$ independent samples from the approximating Gaussian.
This means that $T$ is locally distributed
 like a Poisson process
with density $\lambda = f(0)\cdot N$, where $f(0)= 1/{\sqrt{2\pi \sigma}}$
 is the density function of the
approximating Gaussian at the point~$0$.
In a Poisson process with density $\lambda$, the expected smallest
absolute
value (i.e., the expected distance from the origin to the closest point) is
$1/(2\lambda)$.
Putting all of this together, we obtain
$$E(\min\, \lvert T\rvert) =
E(\min\, \lvert\ln \Phi^0\rvert) =
 \frac 1{2\lambda} = 
{\sqrt{2\pi}}\cdot
\frac{n^{3/4}}{2^n}
.$$
This would give an exponentially small upper bound on the expectation of
$\lvert\ln \Phi^0 |$ for the best sequence
$\tau_1,\ldots,\tau_{n-1}$. By~\eqref{eq:eps-bound+}, this translates
directly into a bound on the expected range~$2|\eps|$.  
Thus, accepting the probabilistic model, this would give an
exponential upper bound on the area range that holds with high
probability, for a given~$n$. It does not rule out that, for a few
exceptional values of~$n$, much smaller minima exist, and thus this
argument
can obviously not be used for a lower bound.

We note that
only the $2^{n/2}$ sequences $\tau_1,\ldots,\tau_n$ that consist of
pairs $+-$ and $-+$ were considered in this argument (and these were
reduced to sequences  $\tau_1,\ldots,\tau_{n-1}$  of length $n-1$ in the
analysis), whereas
the experiments reported in the table consider all $\binom n2$
sequences in which the signs are balanced. 

If we extend the above arguments to an even wilder speculation, it
would imply the ``meta-theorem'' that an optimization problem with $N$
combinatorially distinct configurations should be expected to have a
minimum of the order $\mathrm{poly}(n)/N$.  In our setup, we have
considered a restricted set of $N=2^{n/2}$ dissections of a special
type.  The total number of dissections is also just singly exponential
in $n$, so our restriction causes at most a deterioration in the base
of the exponential growth in this argument.

The number of \emph{triangulations} grows also exponentially with $n$.
(Already the family of combinatorial types considered in our
experiments in
Section~\ref{ssec:triangulations} is exponential.)
Thus, even for triangulations, we can ``expect'' an exponentially
small area deviation.

\subsection{The Tarry--Escott Problem}
\label{sec:tarry}

The question of assigning signs $s_i=\pm1$ in order to cancel
the first $k$
powers
is related to
the so-called Tarry--Escott Problem
 (or Prouhet--Tarry--Escott Problem, or Tarry Problem)~\cite{t-e-p}.
This problem asks for two distinct sets of integers
$\alpha_1,\ldots,\alpha_n$
and
$\beta_1,\ldots,\beta_n$
such that
$$
\alpha_1^d+\dots+\alpha_n^d
=
\beta_1^d+\dots+\beta_n^d,
\text{ for all $d=0,1,2,\ldots,k$}
$$
The solution that corresponds to the first $2^{k+1}$ elements of
the Thue--Morse sequence (Lemma~\ref{lem:annihilate}) was proposed already in 1851
by Eug\`ene  Prouhet~\cite{prouhet}, even in a generalized
setting
where $b^{k+1}$ numbers in an arithmetic progression are partitioned into $b$ sets with equal sums
of powers.

The objective
in the Tarry--Escott Problem
 is to find solutions of small size $n$,
and to come close to the lower bound of $n=k+1$.
In our application, we have the additional constraint that the two
sets form a
partition of the successive integers
$\{1,\ldots,2n\}$ into two parts (see also~\cite{monthly-problem}). 

Some computer runs for small exponents $k$ have found no improvements
over the Thue--Morse sequence.
For example, for $k=3$, the only sequence lengths that allow a partition
with equal sums of powers are $16,24,32,48,\ldots$, and the shortest
one of length $2n=16$ is the Prouhet solution with
the Thue--Morse sequence.
 
\section{Even dissections with unequal areas}
\label{sec:even}
 
Because of Monsky's Theorem,
we have concentrated on dissections with an odd number of triangles.
However, there are also combinatorial types of dissections with an \emph{even}
number of triangles for which the areas cannot be equal.
As pointed out by one of the referees,
our bounds also apply in these cases.

\begin{example}
Figure~\ref{even} shows a dissection of a square into $4$ triangles,
and two triangulations with $16$ and $26$ triangles. In example~(b),
the areas cannot all be equal to $1/16$ because 
the highlighted triangle would then contain more than half of the area of the
square. This is impossible, 
as no triangle contained in the square
can contain more than half of the area.
 Example~(c) is a bit more delicate; it requires a little calculation,
 which we leave as a challenge for the reader.
(It is a manifestation of the fact that the \emph{octahedron graph},
the graph obtained from the lower left half of   Example~(c)
by removing the degree-3 vertices, is not \emph{area-universal}, see
\cite{r-eg-90,k-dpgpf-16}.)

\begin{figure}
  \centering
  \includegraphics{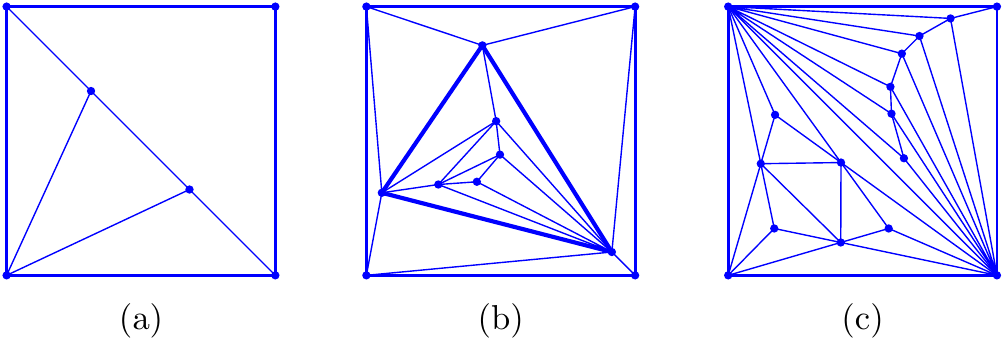}
  \caption{(a) Four triangles where the most balanced area partition
    is
    $\frac12,\frac16,\frac16,\frac16$.
    (b)~A triangulation into 16 triangles, 9 of which
    lie inside the thick triangle.
(c)~A~triangulation with 26 triangles. 
  }
  \label{even}
\end{figure}
\end{example}
All these examples contain a separating triangle.
We are not aware of any 4-connected even triangulation for which one cannot
achieve equal areas.

The approach that we have taken for dissections where the number of
triangles is odd, does not directly carry over
to the even case, 
for the following reason:
We have extended our concept of dissections,
and some of its rules, when taken in isolation, allow drawings that
violate the requirements of a dissection, 
see Examples~\ref{illegal} and \ref{slide-outside}, and
Figure~\ref{fig:ex_frame_emb2}.  Nevertheless, Monsky's approach was
powerful enough to show that even these ``generalized'' equipartitions
cannot exist.

However, the following lemma allows us to extend our approach at least
to the case when there are no ``flipped-over'' triangles, and the
\emph{signed} areas are positive.

\begin{lemma}\label{dissections-for-even}
Let $\phi$ be a framed map of a simplicial graph $\simplgraph$ of a dissection of a simple
$k$-gon~$P$ where all triangular faces of $\simplgraph$ have nonnegative signed area, \textup(In particular, the triangular faces of $\graph$ have the correct orientation.\textup)
Then the triangular faces of $\simplgraph$ with positive area form a dissection of~$P$. 
\end{lemma}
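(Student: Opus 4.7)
The plan is to invoke a degree-theoretic argument for the piecewise-linear extension of $\phi$. First I would recall from the discussion following Definition~\ref{def:red_system} that $|\simplgraph|$ is a simplicial $2$-disk whose outer boundary cycle passes through the corners $c_1,\dots,c_k$ of $P$ in cyclic order, joined by edges that correspond to the sides of $P$. Since $\phi$ fixes each corner $c_i$ at the corresponding corner of $P$, the piecewise-linear extension $\Phi\colon|\simplgraph|\to\mathbb{R}^2$ of $\phi$ sends $\partial|\simplgraph|$ onto $\partial P$, traversed exactly once; assuming the corner sequence $C=(c_1,\dots,c_k)$ is listed counterclockwise (consistently with the sign convention used in Definition~\ref{def:signedarea}), this traversal is counterclockwise.

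Second, for each point $p\in\mathbb{R}^2$ not in the image under $\Phi$ of the $1$-skeleton of $\simplgraph$, consider
\[
\deg(\Phi,p):=\sum_{t}\mathrm{sgn}(a_\phi(t))\cdot[p\in\Phi(t^\circ)],
\]
summing over the triangular faces $t$ of $\simplgraph$. A standard identity for PL maps of a $2$-disk (provable by observing that contributions along interior edges cancel in pairs, so that the total depends only on how the boundary wraps around $p$) identifies $\deg(\Phi,p)$ with the winding number of $\Phi(\partial|\simplgraph|)=\partial P$ around $p$. Consequently $\deg(\Phi,p)=+1$ for $p$ in the interior of $P$, and $\deg(\Phi,p)=0$ for $p$ outside $P$.

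Third, the hypothesis $a_\phi(t)\geq 0$ for every triangular face forces each summand in the definition of $\deg(\Phi,p)$ to be either $0$ or $+1$. Combined with the computed values of the degree, this means: for almost every $p$ in the interior of $P$, exactly one positive-area triangular face has open image containing $p$; and for every $p$ outside $P$ (away from the measure-zero image of the $1$-skeleton), no positive-area face contains $p$ in its image. Hence the images of the positive-area triangular faces are pairwise interior-disjoint subsets of $P$, and their union is dense in $P$; being a finite union of closed triangles, this union equals $P$. The positive-area triangular faces of $\simplgraph$ therefore form a dissection of $P$.

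The main obstacle is justifying the degree/winding-number correspondence rigorously for $\Phi$, which is classical but requires some care because $\Phi$ is in general neither injective nor a priori orientation-preserving; it is precisely the nonnegativity hypothesis on all $a_\phi(t)$ that rules out the orientation-reversing behavior (flipped triangles), which would otherwise allow cancellations that spoil the argument. Alternatively, one can combine the degree computation with Proposition~\ref{prop:area_invariance} to reach the same conclusion: once pairwise disjointness and containment in $P$ are in hand, the equality $\sum_t a_\phi(t)=E$ leaves no room for uncovered area.
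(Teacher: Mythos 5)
Your argument is correct in substance and reaches the conclusion by the same underlying principle as the paper, but it packages that principle differently. The paper also computes, for a generic point $x$, a covering count $\chi(x)$ and shows it equals $1$ inside $P$ and $0$ outside; however, instead of citing the degree/winding-number identity, it proves the needed local-constancy of the count by hand, tracking a curve $W$ from $x$ to infinity and analyzing each crossing via a directed subgraph of the dual graph of a planar drawing of $\simplgraph$ (in-degree equals out-degree at zero-area triangles, degree at most one at positive-area triangles, hence a decomposition into paths and cycles whose net effect on $\chi$ is governed by boundary crossings alone). What your version buys is brevity and a cleaner statement, working directly with the PL extension $\Phi$ on the abstract simplicial disk; what the paper's version buys is self-containedness at exactly the point you flag as the ``main obstacle.''

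One caution: your parenthetical justification of the degree identity --- ``contributions along interior edges cancel in pairs'' --- is precisely the step that fails naively here, and it is the reason the paper's proof is as long as it is. If an interior edge $e$ is shared by a positive-area triangle $t_1$ and a zero-area triangle $t_2$, then crossing $\Phi(e)$ changes the indicator of $t_1$ while $t_2$ contributes nothing, so no pairwise cancellation occurs at that edge; the crossing is only balanced after propagating through a whole chain of degenerate triangles to the nondegenerate triangle on the far side (this is the content of the paper's observation that zero-area triangles have in-degree equal to out-degree in the dual digraph). The identity itself is nonetheless true and can be obtained from the homological local-degree formula at regular values, so your route is legitimate provided you invoke that machinery rather than the naive cancellation. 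Finally, note that both your proof and the paper's rely on $\Phi$ restricted to the boundary cycle of $\simplgraph$ tracing $\partial P$ with winding number $1$, which requires the boundary side nodes (not just the corner nodes) to sit correctly on the sides of $P$; the paper assumes this in the same breath, so you are on equal footing there.
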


\begin{proof} 
For a  point $x$ of the plane, let $\chi(x)$ 
be the number of triangles
  that contain $x$.
Our goal is to show that, for points $x$ that don't lie on an edge of
$\simplgraph$,
$\chi(x)=1$ if $x$ lies in $P$ and $\chi(x)=0$ otherwise.
The standard argument for this establishes that
 $\chi(x)$ remains unchanged when $x$ crosses a triangle edge,
except when the edge
forms the boundary of~$P$, and in that case it changes in the ``correct'' way,
see for example \cite[Section 3]{triangulations-polytope-1996} and \cite[Theorem
4.3]{firla-ziegler}. 
In our case,
the presence of zero-area triangles makes the argument more complicated.

In addition to
  the given drawing $D_0$ of
  $\simplgraph$ (potentially with overlapping edges and coincident
  nodes, and
  conceivably even with crossing edges), let us consider a
plane drawing $D$ of
  $\simplgraph$,
see Figure~\ref{fig:orientation-is-enough}a--b.
We look at an arbitrary point $v$ of the plane that
does not lie on any edge of $D_0$, and we ask in how many triangles of 
  $\simplgraph$ it is contained. We find a curve $W$ from $v$ to
  infinity that avoids all nodes and all intersections between edges,
  and crosses the edges in a finite number of points.
Let us focus on a point
  $K$ where $W$ crosses a line segment of the drawing $D_0$.
The curve $W$ may cross
  several edges of $\simplgraph$ 
simultaneously. We select in
$D$ the edges which are crossed by $W$ at this point $K$, and we orient
  the corresponding edges of the
  dual graph $D^*$ of $D$ accordingly.
  \begin{figure}
    \centering
    \includegraphics{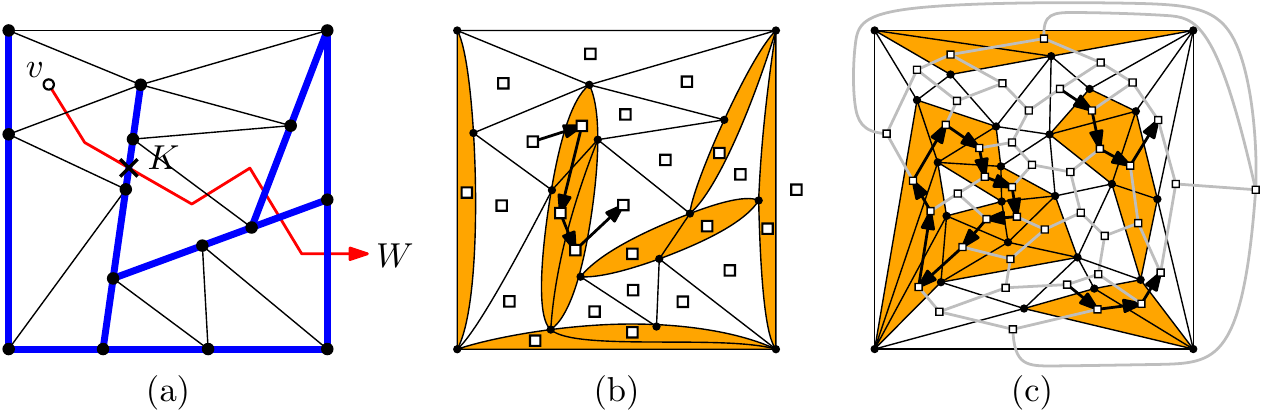}
    \caption{(a) A drawing $D_0$ and a curve $W$ from point $v$ to infinity, crossing a
      segment at $K$;
(b)~the dual path in the drawing $D$ 
that corresponds to the crossing at~$K$;
(c)~a hypothetical dual graph $D^*$, and the directed subgraph $U$ that might
correspond to a crossing.
(The subgraph
induced by the shaded  zero-area triangles cannot actually contain a cycle,
but
we don't need this in our argument.)
}
    \label{fig:orientation-is-enough}
  \end{figure}
The following observations are immediate from this definition.
  \begin{enumerate}
  \item [(i)]
 If $W$ crosses an edge of a zero-area triangle $T$, it will
    cross another edge of $T$ in the opposite direction. This means
    that one of the edges enters $T$ and the other one leaves $T$.
  \item [(ii)]
If $W$ crosses an edge of a triangle $T$
with nonzero area or of the outer polygon $P$, it  crosses no other
edge of this face at this point.
  \end{enumerate}
Figure~\ref{fig:orientation-is-enough}c shows a more elaborate example
of a 
directed graph $U$
that satisfies conditions~(i) and~(ii).
These conditions mean that the directed subgraph $U$ of $D^*$ has indegree=outdegree for
  every  zero-area triangle,
  and degree $\le1$ for every nondegenerate triangle.
  It follows that $U$ consists of node-disjoint directed cycles and
  directed paths. The endpoints of such a path can be a nondegenerate triangle or
  the outer face.

Whenever a path starts in some nondegenerate triangle $T$, the curve
$W$ leaves this
triangle $T$ at the crossing $K$, and whenever a path ends in some
nondegenerate triangle $T$, $W$ enters $T$ at the crossing $K$. Two such changes taken
together have no net effect on~$\chi$.

The conclusion is that, if $W$ does not cross a boundary edge,
$\chi(x)$ does not change.
If $w$ crosses a boundary edge,
$\chi(x)$ changes by $\pm 1$ as appropriate for~$P$. This follows from the
assumption that the boundary nodes, and hence also the boundary
edges, are embedded at the correct corners and sides of $P$.
Since $\chi(x)$ has the correct value 0 when $x$ is far away, it
follows that
 $\chi(x)$ has the correct value everywhere, and therefore,
the triangles
cover $P$ with disjoint  interiors; in other words, they form a dissection.
\end{proof} 

With this lemma,
the lower bound of our main {theorem} (Theorem~\ref{thm:double_exp_lower})
carries over to even dissections
with a given combinatorial type
 for which the minimum is nonzero (for whatever reason).
 The proof can be used verbatim, except that
Lemma~\ref
{dissections-for-even} replaces the application of
Lemma~\ref{lem:discrep_poly}.

Since we were motivated by Monsky's Theorem,
our calculations in Sections~\ref{sec:experiment}
and \ref{sec:upper_bounds} were restricted to the odd case.
The question how small area deviations one can
actually achieve, in terms of explicit constructions,
would also be interesting
for
\emph{even} dissections for which equal areas cannot be achieved.
We leave this for future work.

\subsection*{Acknowledgments}

We are grateful to Moritz Firsching, Arnau Padrol, Francisco Santos, Raman Sanyal, and Louis Theran 
for their input, advice, and many valuable discussions. 

%
%

\bibliographystyle{amsalpha}  
\bibliography{biblio-stripped}  
\end{document}